\documentclass[11pt]{article}

\usepackage{mathpazo}   % With old-style figures and real smallcaps.
\usepackage{tikz}

%%\usetikzlibrary{matrix,arrows,decorations.pathmorphing}
%%\usepackage[doc]{optional}
%\usepackage{color}
%\usepackage{float}
%\usepackage{soul}
%\usepackage{url}
%\usepackage{graphicx}
%\definecolor{labelkey}{rgb}{0,0.08,0.45}
%\definecolor{refkey}{rgb}{0,0.6,0.0}
%\definecolor{Brown}{rgb}{0.45,0.0,0.05}
%\definecolor{lime}{rgb}{0.00,0.8,0.0}
%%\definecolor{lblue}{rgb}{0.5,0.5,0.99}
%\definecolor{lblue}{rgb}{0.8,0.85,1.00}
\usepackage{nameref}
\usepackage{empheq}
\usepackage{comment}
\usepackage[shortlabels,inline]{enumitem}
\setlist[enumerate]{nosep}
%\usepackage{paralist}
%#############
%\documentclass[11pt,leqno]{article}
%\usepackage[doc,hhb,msm]{optional}
\usepackage[colorlinks=true,
linkcolor=refkey,
urlcolor=lblue,
citecolor=red]{hyperref}
\usepackage[doc,wmm,hhb]{optional}
\usepackage{xcolor}

\usepackage{float}
\usepackage{soul}
\usepackage{graphicx}
\definecolor{labelkey}{rgb}{0,0.08,0.45}
\definecolor{refkey}{rgb}{0,0.6,0.0}
\definecolor{Brown}{rgb}{0.45,0.0,0.05}
\definecolor{lime}{rgb}{0.00,0.8,0.0}
\definecolor{lblue}{rgb}{0.5,0.5,0.99}
\definecolor{OliveGreen}{rgb}{0,0.6,0}
\definecolor{tyrianpurple}{rgb}{0.4, 0.01, 0.24}
% \usepackage{mathpazo}

%\usepackage{mathptmx}

%\usepackage[T1]{fontenc}
%\usepackage[sc]{mathpazo}
%\linespread{1.05}

%\usepackage[T1]{fontenc}
%\usepackage{concmath}
\colorlet{hlcyan}{cyan!30}

\usepackage{stmaryrd}
\usepackage{amssymb}

\hyphenation{non-empty}

\makeatletter
\def\namedlabel#1#2{\begingroup
   \def\@currentlabel{#2}%
   \label{#1}\endgroup
}
\makeatother

\newcommand{\seppthree}{\setlength{\itemsep}{-3pt}}

%############
%\oddsidemargin -0.1cm
%\textwidth  16.5cm
%\topmargin  -0.1cm
%\headheight 0.0cm
%\textheight 21.2cm
\usepackage[margin=0.92in,footskip=0.25in]{geometry}
\parindent  4mm
\parskip    7  pt % was 10pt
\tolerance  3000

\newcommand{\J}[1]{\ensuremath{{\operatorname{J}}_{#1}}}
\newcommand{\R}[1]{\ensuremath{{\operatorname{R}}_%
{#1}}}
\newcommand{\Pj}[1]{\ensuremath{{\operatorname{P}}_%
{#1}}}

\providecommand{\siff}{\Leftrightarrow}

\newcommand{\nnn}{\ensuremath{{n\in{\mathbb N}}}}

\newcommand{\menge}[2]{\big\{{#1}~\big |~{#2}\big\}}

\newcommand{\fenv}[1]%
{\ensuremath{\,\overrightarrow{\operatorname{env}}_{#1}}}
\newcommand{\benv}[1]%
{\ensuremath{\,\overleftarrow{\operatorname{env}}_{#1}}}

\newcommand{\scal}[2]{\left\langle{#1},{#2}  \right\rangle}

\newcommand{\RR}{\ensuremath{\mathbb R}}

\newcommand{\dom}{\ensuremath{\operatorname{dom}}\,}

\newcommand{\prox}{\ensuremath{\operatorname{P}}}

\newcommand{\T}{\ensuremath{{\operatorname{T}}}}
\newcommand{\ran}{\ensuremath{{\operatorname{ran}}\,}}
\newcommand{\zer}{\ensuremath{\operatorname{zer}}}

\newcommand{\cdom}{\ensuremath{\overline{\operatorname{dom}}\,}}

\newcommand{\Id}{\ensuremath{\operatorname{Id}}}

\newcommand{\TAB}{\operatorname{T}_{A,B}}

%\newcommand{\bD}[1]{\overleftarrow{\thinspace D\thinspace}_%
%{\negthinspace\negthinspace #1}}

 % vee tiny

%\newcommand{\T}{\ensuremath{{\operatorname{T}}}}
%\newcommand{\TAB}{\T_{A,B}}
\newcommand{\sperp}{{\scriptscriptstyle\perp}}%for SMALLER math symbol

{\begin{list}{}{%
\settowidth{\labelwidth}{\textrm{#1~}}%
\setlength{\leftmargin}{\labelwidth+\labelsep}}}%requires macro calc.sty
{\end{list}}
\usepackage{amsthm}
\makeatletter% This is to fix the bold after the theorem
\def\th@plain{%
	\thm@notefont{}% same as heading font
	\itshape % body font
}
\def\th@definition{%
	\thm@notefont{}% same as heading font
	\normalfont % body font
}
\makeatother
\usepackage[capitalize,nameinlink]{cleveref}
%\crefname{lemma}{lemma}{lemmas}
\crefname{equation}{}{equations}
\crefname{chapter}{Appendix}{chapters}
\crefname{item}{}{items}
\crefname{enumi}{}{}
\newtheorem{theorem}{Theorem}[section]
\newtheorem{lemma}[theorem]{Lemma}

\newtheorem{corollary}[theorem]{Corollary}

\newtheorem{proposition}[theorem]{Proposition}

%[section]
%%\theoremstyle{plain}{\theorembodyfont{\rmfamily}
%\newtheorem{assumption}[theorem]{Assumption}
%%\theoremstyle{plain}{\theorembodyfont{\rmfamily}
%\newtheorem{condition}[theorem]{Condition}
%%\theoremstyle{plain}{\theorembodyfont{\rmfamily}
%\newtheorem{algorithm}[theorem]{Algorithm}
%%\theoremstyle{plain}{\theorembodyfont{\rmfamily}
\newtheorem{example}[theorem]{Example}

\newtheorem{fact}[theorem]{Fact}
\newtheorem{remark}[theorem]{Remark}

%\theoremstyle{remark}

%\def\endproof{\vbox{\hrule height0.6pt\hbox{\vrule height1.3ex%
%width0.6pt\hskip0.8ex\vrule width0.6pt}\hrule height0.6pt}}

%########################################
%###New commands added By Walaa ##################################

\providecommand{\ds}{\displaystyle}

\providecommand{\norm}[1]{\lVert#1\rVert}

\providecommand{\stb}[1]{\left\{#1\right\}}

\providecommand{\LA}{\Leftarrow}
\providecommand{\RA}{\Rightarrow}

\providecommand{\RR}{\mathbb{R}}

\providecommand{\ran}{\operatorname{ran}}

\providecommand{\intr}{\operatorname{int}}

\providecommand{\dom}{\operatorname{dom}}

\newcommand{\fix}{\ensuremath{\operatorname{Fix}}}

\providecommand{\gr}{\operatorname{gra}}
\providecommand{\gra}{\operatorname{gra}}
\providecommand{\Id}{\operatorname{{ Id}}}

\providecommand{\fady}{\varnothing}

\providecommand{\rras}{\rightrightarrows}

\providecommand{\ball}[2]{\operatorname{ball}(#1;#2)}

\providecommand{\gr}{\operatorname{gra}}
\providecommand{\fix}{\operatorname{Fix}}
\providecommand{\ran}{\operatorname{ran}}
\providecommand{\rec}{\operatorname{rec}}
\providecommand{\Id}{\operatorname{Id}}

\providecommand{\zer}{\operatorname{zer}}
\providecommand{\R}{{ R}}
\providecommand{\T}{{ T}}

\providecommand{\fady}{\varnothing}

\newcommand{\cran}{\ensuremath{\overline{\operatorname{ran}}\,}}

\providecommand{\ri}{\operatorname{ri}}

\providecommand{\RR}{\mathbb{R}}

%##################################End of New commands added By Walaa ##################################
%\newcommand{\boxedeqn}[1]{%
%    \[\fbox{%
%        \addtolength{\linewidth}{-2\fboxsep}%
%        \addtolength{\linewidth}{-2\fboxrule}%
%        \begin{minipage}{\linewidth}%
%        \begin{equation}#1\\[+4mm]\end{equation}%
%        \end{minipage}%
%      }\]%
%  }
\definecolor{myblue}{rgb}{0.9,0.9,0.98}

  \newcommand*\mybluebox[1]{%
    \colorbox{myblue}{\hspace{1em}#1\hspace{1em}}}

\allowdisplaybreaks % or locally if problems {\allowdisplaybreaks
%\begin{align} ... \end{align}}
%\usepackage{drftcite}

%\usepackage{autonum}

\begin{document}

\setlength{\abovedisplayskip}{8pt}
\setlength{\belowdisplayskip}{8pt}	
	
%-------------------------------------------------------------------------

%\tikzstyle{decision} = [diamond, draw, fill=blue!50]
%\tikzstyle{line} = [draw, -stealth, thick]
%\tikzstyle{elli}=[draw, ellipse, fill=red!50,minimum height=8mm, text width=5em, text centered]
%\tikzstyle{block} = [draw, rectangle, fill=blue!50, text width=8em, text centered, minimum height=15mm, node distance=10em]
%

\author{
%Heinz H.\ Bauschke\thanks{
%Mathematics, University
%of British Columbia,
%Kelowna, B.C.\ V1V~1V7, Canada. E-mail:
%\texttt{heinz.bauschke@ubc.ca}.}~~~and~
Walaa M.\ Moursi\thanks{
Department of Combinatorics and Optimization, 
University of Waterloo,
Waterloo, Ontario N2L~3G1, Canada.
  %and
  %Mansoura University, Faculty of Science,
  %Mathematics Department,
  %Mansoura 35516, Egypt.
  E-mail: \texttt{walaa.moursi@uwaterloo.ca}.}
}

\title{\textsf{
The range of the Douglas--Rachford operator\\ in 
infinite-dimensional Hilbert spaces
}
}

\date{June 14, 2022}

\maketitle

\begin{abstract}
The Douglas--Rachford algorithm is one of the 
most prominent splitting algorithms for solving 
convex optimization problems.
Recently, the method has been successful in finding
a \emph{generalized solution} (provided that one exists) for optimization problems 
in the inconsistent case, i.e., 
when a solution does not exist.
The convergence analysis of the inconsistent case hinges on the study 
 of the range of the displacement operator associated with the 
Douglas--Rachford splitting operator and the corresponding minimal displacement vector.
In this paper, we provide a formula for the range of
the Douglas--Rachford splitting operator in (possibly) infinite-dimensional Hilbert spaces
under mild assumptions on the underlying operators.
Our new results  complement
known results in finite-dimensional Hilbert spaces.
Several examples illustrate and tighten our conclusions.
\end{abstract}
{ 
\small
\noindent
{\bfseries 2010 Mathematics Subject Classification:}
{Primary 
47H05, %Monotone operators and generalizations
47H09,
90C25;
Secondary 
90C46,
47H14, 
49M27, 
49M29, 
49N15.% General operator theory
}

\noindent {\bfseries Keywords:}
convex function,
convex set,
displacement map,
Douglas--Rachford splitting operator,
duality
maximally monotone operator,
range,
subdifferential operator
}

\section{Introduction}

Throughout, we assume that 
\begin{empheq}[box=\mybluebox]{equation}
\text{$X$ is
a real Hilbert space with inner product 
$\scal{\cdot}{\cdot}\colon X\times X\to\RR$, }
\end{empheq}
and induced norm $\|\cdot\|$.
Let $A\colon X\rras X$.
Recall that $A$ is \emph{monotone}
if $\{(x,u),(y,v)\}\subseteq \gra A$
implies that $\scal{x-y}{u-v}\ge 0$
and that 
$A$ is \emph{maximally monotone}
if any proper extension of 
$\gra A$ destroys its monotonicity.
The \emph{resolvent} of $A$ is 
$\J{A}=(\Id+A)^{-1}$
and the  \emph{reflected resolvent} of $A$ 
is $\R{A}=2\J{A}-\Id$,
where
$\Id\colon X\to X\colon x\mapsto x$.
%\section{Background and auxiliary results}
We recall the well-known 
inverse resolvent identity (see \cite[Lemma~12.14]{Rock98})
\begin{equation}%\label{inv:res}
	\label{e:iri}
	\J{A} +\J{A^{-1}}=\Id,
\end{equation}
and the following, useful description of the graph of $A$
thanks to Minty (see  \cite{Minty}).

\begin{fact}[{Minty}]
	\label{thm:minty}
	Let $A\colon X\rras X$ be monotone. Then
	\begin{equation}
		\label{eq:Minty}
		\gra A=\menge{(\J{A}  x, \J{A^{-1}}x)}{x\in \ran (\Id+A)}.
	\end{equation}
	Moreover, $A$ is maximally monotone $\siff$ $\ran (\Id+A)=X$.
%	\begin{equation}
%		\label{eq:Minty:2}
%		\text{$A$ is maximally monotone $\siff$ $\ran (\Id+A)=X$.}
%	\end{equation}
\end{fact}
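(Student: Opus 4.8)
The plan is to read equation~\eqref{eq:Minty} directly off the definitions of the resolvent together with the inverse resolvent identity~\eqref{e:iri}, and then to split the maximal monotonicity characterisation into an easy direction (handled by hand) and the substantive direction (the classical surjectivity theorem, which I would cite).

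First I would record the standing single-valuedness remark. Since $A$ is monotone, so is $A^{-1}$, and for any $(x,u),(y,v)\in\gra(\Id+A)$ one has $\scal{x-y}{(x-y)+(u-v)}=\|x-y\|^2+\scal{x-y}{u-v}\ge\|x-y\|^2$; hence $\Id+A$ is injective, so $\J{A}=(\Id+A)^{-1}$ is a genuine (single-valued) map on its domain $\ran(\Id+A)$, and likewise $\J{A^{-1}}$ is single-valued on $\ran(\Id+A^{-1})=\ran(\Id+A)$, the two ranges coinciding because $(a,a^*)\in\gra A\Leftrightarrow(a^*,a)\in\gra A^{-1}$ makes $a+a^*$ range over the same set. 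This makes the right-hand side of~\eqref{eq:Minty} unambiguous.

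Next I would prove the two inclusions. For ``$\subseteq$'': given $(a,a^*)\in\gra A$, set $x:=a+a^*\in\ran(\Id+A)$; then $x-a=a^*\in Aa$ yields $x\in(\Id+A)a$, hence $a=\J{A}x$, and $x-a^*=a\in A^{-1}a^*$ yields $x\in(\Id+A^{-1})a^*$, hence $a^*=\J{A^{-1}}x$ (alternatively, invoke~\eqref{e:iri} to get $a^*=x-\J{A}x=\J{A^{-1}}x$); thus $(a,a^*)=(\J{A}x,\J{A^{-1}}x)$ with $x\in\ran(\Id+A)$. For ``$\supseteq$'': given $x\in\ran(\Id+A)$, choose $(a,a^*)\in\gra A$ with $x=a+a^*$ and apply the very same computation to conclude $(\J{A}x,\J{A^{-1}}x)=(a,a^*)\in\gra A$.

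For the ``moreover'' clause, the implication $\ran(\Id+A)=X\Rightarrow A$ maximally monotone I would argue by contradiction: if $(\bar a,\bar a^*)\notin\gra A$ monotonically extends $\gra A$, then surjectivity gives $(a,a^*)\in\gra A$ with $a+a^*=\bar a+\bar a^*$, so $\bar a^*-a^*=-(\bar a-a)$ and the monotonicity inequality $\scal{\bar a-a}{\bar a^*-a^*}\ge0$ collapses to $-\|\bar a-a\|^2\ge0$, forcing $(\bar a,\bar a^*)=(a,a^*)\in\gra A$, a contradiction. The reverse implication $A$ maximally monotone $\Rightarrow\ran(\Id+A)=X$ is the genuinely deep part: here I would cite \cite{Minty} (Minty's theorem). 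If a self-contained sketch is wanted, one translates $A$ by a fixed $y\in X$ to reduce the claim to showing $0\in\ran(\Id+A)$ for an arbitrary maximally monotone $A$, and then obtains such a point by minimising the Fitzpatrick function of $A$ (or via a Debrunner--Flor/Zorn's-lemma extension argument combined with a weak-compactness projection). That reduction step is the only real obstacle; everything else is bookkeeping with the definitions.
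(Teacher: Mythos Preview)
The paper does not prove this statement at all: it is recorded as a \emph{Fact} with a bare citation to \cite{Minty}, so there is no in-paper proof to compare against. Your argument is the standard one and is correct in outline; the two inclusions for~\eqref{eq:Minty} and the easy direction of the maximality characterisation are fine, and citing \cite{Minty} for the surjectivity direction is exactly what the paper does.

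One small slip worth fixing: in your single-valuedness step you compute $\scal{x-y}{(x-y)+(u-v)}\ge\|x-y\|^2$ for $(x,u),(y,v)\in\gra(\Id+A)$, but this inequality is vacuous when $u=v$ and does not yield injectivity. What you want is the strong monotonicity of $\Id+A$: writing $u=x+a$, $v=y+b$ with $(x,a),(y,b)\in\gra A$, monotonicity of $A$ gives $\scal{x-y}{u-v}=\|x-y\|^2+\scal{x-y}{a-b}\ge\|x-y\|^2$, and \emph{this} forces $x=y$ whenever $u=v$. With that correction your proof is complete.
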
	

In the following, we assume that 
\begin{empheq}[box=\mybluebox]{equation}
	\label{e:def:T}
	\text{$A$ and $B$ are maximally monotone on $X$.}
\end{empheq}
The Douglas--Rachford splitting
operator associated with the ordered pair $(A,B)$
is 
\begin{empheq}[box=\mybluebox]{equation}
	\label{e:T:sp:def}
	T =  \TAB := \Id -\J{A} + \J{B}\R{A}. 
\end{empheq}
It is critical to observe that  $\zer(A+B)\neq \fady$ 
if and only if $\fix T\neq \fady$, see, e.g.,
\cite[Proposition~26.1(iii)(b)]{BC2017}.
Let $x_0\in X$.
The splitting operator in \cref{e:T:sp:def} 
defines the so-called \emph{governing} sequence
via  iterating $T $ at $x_0$ to obtain
$(x_n)_\nnn= (T^nx_0)_\nnn$
and $(x_n)_\nnn$ converges weakly to 
a point in $\fix T$ provided that the latter is nonempty.
In this case,
the \emph{shadow} sequence 
$(y_n)_\nnn= (\J{A}x_n)_\nnn$
converges weakly to a point in $\zer(A+B)$.
Note that 
\begin{equation}
	\label{e:Id-T:desc}
\Id-T = \J{A}-\J{B}\R{A}=\J{A^{-1}}+\J{B^{-1}}\R{A}.
\end{equation}
The range of the displacement operator $\Id-T$
is the central focus of this paper. 
Indeed, $\fix T\neq \fady$
if and only if $0\in \ran (\Id-T)$.
%It natural to ask whether and how this range is 
%related to the operators $A$ and $B$. 
Because $T$ is firmly nonexpansive, we learn that
(see, e.g., 
 \cite[Theorem~31.2]{Simons2})
 %\cite[Example~20.19~and~Corollary~21.14]{BC2017}
\begin{equation}
\label{e:prop:disp}	
\text{ $\Id-T$ is maximally monotone, hence $\cran(\Id-T)$ is convex.}
\end{equation}
This beautiful topological property 
of $\cran (\Id-T)$
allows us to work with best approximation properties of
nonempty closed convex sets.
In the following we set
\begin{equation}
D=\dom A-\dom B
\text{\;\;and\;\;} R=\ran A +\ran B.	
\end{equation}	
It is always true that  
(see \cite[Corollary~4.1]{EckThesis} or \cite[Corollary~2.14]{Sicon})
\begin{equation}
\ran(\Id-T) = \menge{a-b}{(a,a^*)\in\gr A,\, (b,b^*)\in \gr B,\, a-b=a^*+b^*} 
\subseteq D \cap R.
\end{equation}
Hence 
\begin{equation}
	\label{eq:inc:always}
\cran(\Id-T)\subseteq \overline{D\cap R}.
%\subseteq \overline{D}\cap\overline{R}
%.
\end{equation} 
When $X$ is finite-dimensional, 
the authors in \cite{MOR} proved that under mild assumptions on
$A$ and $B$ we have 
\begin{empheq}[box=\mybluebox]{equation}
	\label{e:assump:free:fd}
	\cran(\Id-T)=\overline{D\cap R}.
%	=\overline{D}\cap \overline{R}. 
\end{empheq}

\emph{Contribution.} In this paper we prove that
the identity in \cref{e:assump:free:fd} 
holds in  infinite-dimensional Hilbert spaces.
Our proof techniques are  completely independent 
and significantly distinct
from the 
tools used in the finite-dimensional case:
%to prove  in \cite{MOR}:
Indeed the analysis techniques used in \cite{MOR}
hinge on the well-developed  calculus of the relative interior of 
convex and \emph{nearly convex}\footnote{Let $E$ be a subset of $X$.
	We say that $E $ is \emph{nearly convex}
	%(see \cite[Theorem~12.41]{Rock98})
	if there exists a convex subset $C$ of
	$X$ such that $C\subseteq E\subseteq \overline{C}$.
	(For 
	detailed discussion on the algebra of nearly
	convex sets we refer the reader to \cite[Section~3]{Rock70}.)} sets (see \cite{Rock1970}
 and \cite{BMW}). Not only does this powerful calculus 
 notably fails to extend in infinite-dimensional Hilbert spaces
 but also there exists no analogously useful notion 
 that can replace it. This highlights the 
 infeasibility of extending any of the proof techniques in \cite{MOR}
to infinite-dimensional settings.
 Instead, our analysis hinges on a novel and 
 powerful approach that connects 
 the graph of $\Id-T$
 and the graphs of the individual operators $A$
 and $B$. This allows to exploit any additional properties 
 of the operators $A$
  and $B $ and demonstrate how this reflects on 
  the displacement mapping $\Id-T$.
  As a byproduct of our analysis we were able to 
refine the topological properties of the sets $D$ and $R$
as well as the corresponding minimal norm vectors.

\emph{Organization and notation.}
The remainder of this paper is organized as follows:
\cref{sec:2} presents the main results of the paper.
In \cref{sec:3} we provide sharper conclusions 
when specializing the main results to subdifferential operators.
Finally, in \cref{sec:4} we provide a detailed study of the 
minimal norm vector in the range of displacement map.
An application of our results to the product space setting is also 
presented.

 Our notation is standard and follows largely, e.g., 
\cite{BC2017}, \cite{Rock98} and \cite{Simons1}.

\section{The range of $\Id-T$}
\label{sec:2}
We start with the following key lemma. 
\begin{lemma}
	\label{lem:gr:Id-T}
	Let $(a,b,a^*,b^*,x)\in X^5$.
	Then the following holds:	
\begin{equation}
\begin{split}
		\label{e:subgrad:scal}
\scal{x-(a+a^*)}{x-Tx -(a-b)}
&\ge 
\scal{\J{A}x-a}{\J{A^{-1}}x-a^*}
\\
&\quad+\scal{\J{B}\R{A} x-b}{\J{B^{-1}}\R{A}x-(a-a^*-b)}.
\end{split}
	\end{equation}
	\end{lemma}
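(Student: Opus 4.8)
The plan is to read \eqref{e:subgrad:scal} as a purely \emph{algebraic} inequality valid for every tuple in $X^5$; in particular monotonicity of $A$ and $B$ plays no role (which is also why $b^*$ does not appear). The idea is to rewrite everything in terms of the three vectors $\J{A}x$, $\J{A^{-1}}x$ and $\J{B}\R{A}x$, using the definition $T=\Id-\J{A}+\J{B}\R{A}$ and the inverse resolvent identity \eqref{e:iri}; both sides then become quadratic forms in these vectors (shifted by $a,a^*,b$), and their difference will collapse to a single square.

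Concretely, I would first record the elementary consequences of \eqref{e:iri}: $x-Tx=\J{A}x-\J{B}\R{A}x$, $\J{A^{-1}}x=x-\J{A}x$, $\R{A}x=2\J{A}x-x$, and $\J{B^{-1}}\R{A}x=\R{A}x-\J{B}\R{A}x$. Then introduce the shorthands
\[
u:=\J{A}x-a,\qquad v:=\J{A^{-1}}x-a^*,\qquad w:=\J{B}\R{A}x-b.
\]
With these, the left-hand side of the asserted inequality equals $\scal{u+v}{u-w}$, since $x-(a+a^*)=(\J{A}x-a)+(\J{A^{-1}}x-a^*)=u+v$ and $x-Tx-(a-b)=(\J{A}x-a)-(\J{B}\R{A}x-b)=u-w$. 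On the right-hand side, the first summand is $\scal{u}{v}$ by definition; for the second, I would check — writing $a-a^*-b=\R{A}x-\J{B}\R{A}x-u+v+w$ from the definitions of $u,v,w$ and $\R{A}x=2\J{A}x-x$ — that $\J{B^{-1}}\R{A}x-(a-a^*-b)=u-v-w$, so the second summand is $\scal{w}{u-v-w}$.

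It then remains to verify $\scal{u+v}{u-w}-\scal{u}{v}-\scal{w}{u-v-w}\ge 0$. Expanding and using symmetry of the real inner product, the terms involving $v$ cancel, and what is left is exactly $\normsq{u-w}=\normsq{(\J{A}x-a)-(\J{B}\R{A}x-b)}=\normsq{(x-Tx)-(a-b)}\ge 0$, which proves the claim (with equality precisely when $a-b=x-Tx$). No genuine obstacle is expected: the only point requiring care is the bookkeeping that reduces the $\J{B^{-1}}\R{A}x$ term to $u-v-w$, and once the substitutions above are in place, recognizing the leftover as $\normsq{u-w}$ is immediate.
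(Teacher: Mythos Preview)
Your proof is correct and follows essentially the same route as the paper: both compute the difference between the left- and right-hand sides and show it equals $\normsq{(x-Tx)-(a-b)}\ge 0$. Your introduction of the shorthands $u,v,w$ streamlines the bookkeeping, but the underlying algebraic identity is identical to the paper's.
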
	
\begin{proof}
Recalling  \cref{e:Id-T:desc}, we have 	
\begin{subequations}
\begin{align}
&\qquad\scal{x-(a+a^*)}{x-Tx -(a-b)}	-\scal{\J{A}x-a}{\J{A^{-1}}x-a^*}
\nonumber
\\
&\qquad-\scal{\J{B}\R{A} x-b}{\J{B^{-1}}\R{A}x-(a-a^*-b)}
\nonumber
\\
&=\scal{x-(a+a^*)}{\J{A}x -a-(\J{B}\R{A} x-b)}
-\scal{\J{A}x-a}{x-(a+a^*)-(\J{A}  x -a)}
\nonumber
\\
&\quad+\scal{\J{B}\R{A}x-b}{2(\J{A}x-a)-\J{B}\R{A} x-(x-a-a^*-b)}
\\
&=\scal{x-(a+a^*)}{\J{A}x -a}
-\scal{x-(a+a^*)}{\J{B}\R{A} x-b}
\nonumber
\\
&
\quad-\scal{\J{A}x-a}{x-(a+a^*)}+\norm{\J{A} x -a}^2-2\scal{\J{A}x-a}{\J{B}\R{A} x-b}
\nonumber
\\
&\quad+\norm{\J{B}\R{A}x-b}^2+\scal{\J{B}\R{A}x-b}{x-(a+a^*)}
\\
&=\norm{\J{A} x -a}^2-2\scal{\J{A}x-a}{\J{B}\R{A} x-b}+\norm{\J{B}\R{A}x-b}^2
\\
&=\norm{x-Tx-(a-b)}^2\ge 0.
\end{align}
\end{subequations}
The proof is complete.	
\end{proof}	
\begin{proposition}
	\label{prop:lin:case:gen}
Suppose that 
%$(\forall C\in \{A,B\})$ we have 
%$	(\dom C-\dom C)\perp (\ran C-\ran C).$
\begin{equation}
	\label{eq:ortho}
	(\dom A-\dom A)\perp (\ran A-\ran A)
	\text{~~and ~~}
	(\dom B-\dom B)\perp (\ran B-\ran B).
\end{equation}		
Then $\ran (\Id-T)=({\dom}\ A-{\dom}\ B)\cap ({\ran}\ A+{\ran}\ B)$.
\end{proposition}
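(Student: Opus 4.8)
The plan is to reduce the claim to a single inclusion and then to exploit how restrictive \cref{eq:ortho} is. Since the excerpt already records $\ran(\Id-T)\subseteq D\cap R$ together with the precise description
\[
\ran(\Id-T)=\menge{a-b}{(a,a^*)\in\gra A,\ (b,b^*)\in\gra B,\ a-b=a^*+b^*},
\]
where $D=\dom A-\dom B$ and $R=\ran A+\ran B$, the entire task is the reverse inclusion $D\cap R\subseteq\ran(\Id-T)$. My key observation will be that \cref{eq:ortho} forces the operators to \emph{decouple completely}: $\gra A=\dom A\times\ran A$ and $\gra B=\dom B\times\ran B$.

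First I would prove this decoupling, which is the only step with any real content. The inclusions $\gra A\subseteq\dom A\times\ran A$ and $\gra B\subseteq\dom B\times\ran B$ are trivial, so I only need the reverse ones. Fix $a\in\dom A$ and $a^*\in\ran A$. For any $(y,y^*)\in\gra A$ one has $a-y\in\dom A-\dom A$ and $a^*-y^*\in\ran A-\ran A$, so the first relation in \cref{eq:ortho} gives $\scal{a-y}{a^*-y^*}=0$, in particular $\ge 0$. Hence $\gra A\cup\{(a,a^*)\}$ is monotone, and maximality of $A$ forces $(a,a^*)\in\gra A$. The identical argument with the second relation in \cref{eq:ortho} handles $B$.

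Then I would substitute the two product descriptions into the displacement formula, obtaining
\[
\ran(\Id-T)=\menge{a-b}{a\in\dom A,\ a^*\in\ran A,\ b\in\dom B,\ b^*\in\ran B,\ a-b=a^*+b^*},
\]
and conclude as follows: given $z\in D\cap R$, use $z\in D$ to pick $\bar a\in\dom A$ and $\bar b\in\dom B$ with $\bar a-\bar b=z$, use $z\in R$ to pick $\bar a^*\in\ran A$ and $\bar b^*\in\ran B$ with $\bar a^*+\bar b^*=z$, and note that $\bar a-\bar b=z=\bar a^*+\bar b^*$, so $z\in\ran(\Id-T)$. Together with the reverse inclusion this gives $\ran(\Id-T)=D\cap R$, with no closures needed, exactly as stated.

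The ``hard part'' is really only the decoupling step, and the subtlety there is to resist relating $a$ to $a^*$: the inner product $\scal{a-y}{a^*-y^*}$ is annihilated purely because $a,y\in\dom A$ while $a^*,y^*\in\ran A$. (Alternatively, under \cref{eq:ortho} one can read off from \cref{thm:minty} that $\J{A}$ and $\J{A^{-1}}$ are affine with values in the mutually orthogonal subspaces $\cspan(\dom A-\dom A)$ and $\cspan(\ran A-\ran A)$, which yields the same decoupling; but that path is more computational, and \cref{lem:gr:Id-T} is not needed for this particular statement.)
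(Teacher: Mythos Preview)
Your proof is correct but proceeds along a genuinely different route from the paper. The paper does not decouple the graphs: it keeps $A$ and $B$ untouched and instead invokes \cref{lem:gr:Id-T} directly. Given $w=a-b=a^*+b^*$ with $(a,b,a^*,b^*)\in\dom A\times\dom B\times\ran A\times\ran B$, the lemma gives, for every $x\in X$,
\[
\scal{x-(a+a^*)}{x-Tx-w}\ge\scal{\J{A}x-a}{\J{A^{-1}}x-a^*}+\scal{\J{B}\R{A}x-b}{\J{B^{-1}}\R{A}x-b^*},
\]
and the two terms on the right vanish by \cref{eq:ortho} because $(\J{A}x,\J{A^{-1}}x)\in\dom A\times\ran A$ and $(\J{B}\R{A}x,\J{B^{-1}}\R{A}x)\in\dom B\times\ran B$. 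Maximal monotonicity of $\Id-T$ then forces $(a+a^*,w)\in\gra(\Id-T)$, so $w\in\ran(\Id-T)$.

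Your argument extracts more structure up front: you show that \cref{eq:ortho} forces $\gra A=\dom A\times\ran A$ and $\gra B=\dom B\times\ran B$, after which the displacement description of $\ran(\Id-T)$ immediately yields the result. This is more elementary and makes transparent \emph{why} the orthogonality hypothesis is so strong (each operator becomes constant-valued in the sense $Aa=\ran A$ for every $a\in\dom A$); it also explains the simple shape of \cref{ex:line:subsp}. The paper's route, by contrast, is a warm-up for the harder \cref{prop:opp:inclusion}: it shows how \cref{lem:gr:Id-T} is meant to be used, with the right-hand side there being bounded below (here identically zero) and maximal monotonicity of $\Id-T$ closing the argument. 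So your approach is shorter for this proposition in isolation, while the paper's approach previews the machinery that drives the main results.
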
	

\begin{proof}
Indeed, let 
$w\in ({\dom}\ A-{\dom}\ B)\cap ({\ran}\ A+{\ran}\ B)$.
Then $(\exists (a,b,a^*,b^*) \in 
{\dom} A\times{\dom} B\times {\ran} A\times{\ran} B$
such that $	w=a-b=a^*+b^*$.
\cref{lem:gr:Id-T}  implies that $(\forall x\in X)$
\begin{align}
\label{e:gra:Id-T:magic}	
\scal{x-(a+a^*)}{x-Tx -w}
	&\ge 
	\scal{\J{A}x-a}{\J{A^{-1}}x-a^*} 
	+\scal{\J{B}\R{A} x-b}{\J{B^{-1}}\R{A}x-b^*}.
\end{align}
Recalling \cref{thm:minty}, we learn
that  
\begin{subequations}
	\label{e:ortho:AB}
\begin{align}
({\J{A}x-a},{\J{A^{-1}}x-a^*} )&\in (\dom A-\dom A)\times (\ran A-\ran A)
\\
({\J{B}\R{A} x-b},{\J{B^{-1}}\R{A}x-b^*})&\in (\dom B-\dom B)\times (\ran B-\ran B).
\end{align}
\end{subequations}	
Combining \cref{e:gra:Id-T:magic} and \cref{e:ortho:AB}
in view of \cref{eq:ortho}
yields $	\scal{x-(a+a^*)}{x-Tx -w}
\ge 0.$
In view of \cref{e:prop:disp}	
 we conclude that 
$(a+a^*,w)\in \gra (\Id-T)$. This completes the proof. 
\end{proof}

\begin{example}
	\label{ex:line:subsp}
Let $U$ and $V$ be closed linear subspaces of $X$,
let $(u,v)\in U\times V$ and let $(u^\sperp,v^\sperp)\in U^\perp\times V^\perp$.
Set $(A,B)=(u+N_{u^\sperp+U},v+N_{v^\sperp+V})$.
Then the following hold:
\begin{enumerate}
\item
\label{ex:line:subsp:i}
$(\dom A,\dom B,\ran A,\ran B)=(u^\sperp+U,v^\sperp+V,u+U^\perp,v+V^\perp)$.
\item
\label{ex:line:subsp:ii}
$(\dom A-\dom A,\dom B-\dom B,\ran A-\ran A,\ran B-\ran B)=(U,V,U^\perp,V^\perp)$.
\item	
\label{ex:line:subsp:iii}
$(\forall C\in \{A,B\})$ we have 
$	(\dom C-\dom C)\perp (\ran C-\ran C)$.
\item	
\label{ex:line:subsp:iv}
$\ran (\Id-T)=(u^\sperp-v^\sperp+U+V)\cap(u+v+U^\perp+V^\perp)$.
\end{enumerate}	
\end{example}	
\begin{proof}
\cref{ex:line:subsp:i}--\cref{ex:line:subsp:ii}:
This is clear.
\cref{ex:line:subsp:iii}:
This is a direct consequence of \cref{ex:line:subsp:ii}.

\cref{ex:line:subsp:iv}:
Combine \cref{ex:line:subsp:iii} and 
\cref{prop:lin:case:gen}.	
\end{proof}	

We are now ready to derive an analogous formula to 
the  conclusion of \cref {prop:lin:case:gen}
in a more general setting.
Let $C\colon X\rras X$ be monotone. 
Recall that
$C$ is
\emph{$3^*$ monotone} (this is also known as \emph{rectangular}) 
if $(\forall (y,z^*)\in \dom C\times \ran C)$ 
we have $
\inf_{(x,x^*)\in\gra C}\scal{x-y}{x^*-z^*}>-\infty$.
It is well-known that
(see, e.g.,
\cite[page~167]{Br-H} and 
 \cite[page~127]{Simons2})
for a proper lower semicontinuous convex function $f\colon X\to \left]-\infty,+\infty\right]$
we have 
\begin{equation}
	\text{$\partial f$ is $3^*$ monotone.}	
\end{equation}

\begin{proposition}
\label{prop:opp:inclusion}
Let $w\in ({\dom}\ A-{\dom}\ B)\cap ({\ran}\ A+{\ran}\ B)$.
Suppose that one of the following hold:
\begin{enumerate}
\item
\label{prop:opp:inclusion:i}
$A$ and $B$ are $3^*$ monotone.
\item
\label{prop:opp:inclusion:ii}
$\dom A\subseteq (w+\dom B)$  and $B$ is $3^*$ monotone.
\item
\label{prop:opp:inclusion:iii}
$\dom B\subseteq (-w+\dom A)$  and $A$ is $3^*$ monotone.
\end{enumerate}	
Then $w\in \overline{\ran} (\Id-T)$.
\end{proposition}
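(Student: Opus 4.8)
The plan is to combine \cref{lem:gr:Id-T} with \cref{thm:minty} to produce, for a well-chosen anchor point $z_0\in X$, a \emph{uniform} lower bound $\scal{x-z_0}{(\Id-T)x-w}\ge -M$ valid for all $x\in X$ with a finite constant $M\ge 0$ independent of $x$, and then to upgrade this to the membership $w\in\cran(\Id-T)$ by a separation argument that exploits the convexity of $\cran(\Id-T)$ recorded in \cref{e:prop:disp}.

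First I would fix a decomposition of $w$ adapted to the hypothesis at hand. Since $w\in(\dom A-\dom B)\cap(\ran A+\ran B)$, choose $a^*\in\ran A$ and $b^*\in\ran B$ with $w=a^*+b^*$. Under \cref{prop:opp:inclusion:i}, choose in addition $a\in\dom A$ and $b\in\dom B$ with $w=a-b$. Under \cref{prop:opp:inclusion:ii}, using $a^*\in\ran A$ pick $a\in\dom A$ with $a^*\in Aa$ and put $b:=a-w$, which lies in $\dom B$ because $\dom A\subseteq w+\dom B$. Under \cref{prop:opp:inclusion:iii}, symmetrically pick $b\in\dom B$ with $b^*\in Bb$ and put $a:=b+w\in\dom A$. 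In each case $(a,b,a^*,b^*)\in\dom A\times\dom B\times\ran A\times\ran B$, $a-b=w=a^*+b^*$, and hence $a-a^*-b=b^*$. Applying \cref{lem:gr:Id-T} to the quintuple $(a,b,a^*,b^*,x)$ and using $a-a^*-b=b^*$ then gives, for every $x\in X$,
\[
\scal{x-(a+a^*)}{(\Id-T)x-w}\ \ge\ \scal{\J{A}x-a}{\J{A^{-1}}x-a^*}+\scal{\J{B}\R{A}x-b}{\J{B^{-1}}\R{A}x-b^*}.
\]
By \cref{thm:minty} (applied to the maximally monotone $A$ at $x$, and to the maximally monotone $B$ at $\R{A}x$, where $\ran(\Id+A)=\ran(\Id+B)=X$) we have $(\J{A}x,\J{A^{-1}}x)\in\gra A$ and $(\J{B}\R{A}x,\J{B^{-1}}\R{A}x)\in\gra B$. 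I would then bound the two inner products on the right uniformly in $x$: a term whose fixed pair lies on the graph of the corresponding operator is $\ge 0$ by monotonicity, while a term whose fixed pair only lies in $\dom\times\ran$ is bounded below, uniformly in $x$, by the finite quantity appearing in the definition of $3^*$ monotonicity. By the choices above, exactly one alternative applies to each of the two terms in each of the three cases (in \cref{prop:opp:inclusion:i} both terms use $3^*$ monotonicity; in \cref{prop:opp:inclusion:ii} the first term uses monotonicity of $A$ since $(a,a^*)\in\gra A$ and the second uses $3^*$ monotonicity of $B$; in \cref{prop:opp:inclusion:iii} it is the other way around), so there is a finite $M\ge 0$ with $\scal{x-(a+a^*)}{(\Id-T)x-w}\ge -M$ for all $x\in X$.

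To finish, set $z_0:=a+a^*$ and $S:=\Id-T$, which is single-valued with full domain (as $T$ is firmly nonexpansive) and, by \cref{e:prop:disp}, has $\cran S$ nonempty, closed and convex. Suppose $w\notin\cran S$; let $p$ be the projection of $w$ onto $\cran S$ and put $d:=w-p\neq 0$. The projection characterization gives $\scal{d}{Sx-p}\le 0$, hence $\scal{d}{Sx-w}\le -\norm{d}^2$, for every $x\in X$. Substituting $x:=z_0+td$ with $t>0$ yields $-M\le\scal{x-z_0}{Sx-w}=t\,\scal{d}{Sx-w}\le -t\norm{d}^2$, which is impossible once $t>M/\norm{d}^2$. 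Therefore $w\in\cran(\Id-T)$.

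The only genuinely delicate point, I expect, is the bookkeeping in the middle step: matching each of the three hypotheses to the correct choice of $(a,a^*)$ or $(b,b^*)$ so that one of the two inner products is annihilated by plain monotonicity while the other is kept finite by $3^*$ monotonicity, together with the algebraic identity $a-a^*-b=b^*$, which is exactly what makes \cref{lem:gr:Id-T} deliver the two ``diagonal'' pairings $(\J{A}x-a,\J{A^{-1}}x-a^*)$ and $(\J{B}\R{A}x-b,\J{B^{-1}}\R{A}x-b^*)$. The separation/unboundedness step at the end is routine given \cref{e:prop:disp}.
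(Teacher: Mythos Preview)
Your argument is correct and its first half coincides with the paper's: the same decomposition of $w$ in each of the three cases, the same application of \cref{lem:gr:Id-T}, and the same use of monotonicity versus $3^*$~monotonicity to obtain a uniform lower bound $\scal{x-(a+a^*)}{(\Id-T)x-w}\ge -M$ for all $x\in X$.

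The divergence is in the final step. The paper exploits the \emph{maximal monotonicity} of $\Id-T$ from \cref{e:prop:disp}: applying Minty's theorem to $\tfrac{1}{n^2}\Id+(\Id-T)$ produces, for each $n\ge1$, a point $x_n$ with $(x_n,w-\tfrac{1}{n^2}x_n)\in\gra(\Id-T)$; plugging $x=x_n$ into the uniform bound gives $\scal{x_n-(a+a^*)}{-\tfrac{1}{n^2}x_n}\ge -M$, from which a Cauchy--Schwarz manipulation shows $(\tfrac{1}{n}x_n)_{n\ge1}$ is bounded and hence $\tfrac{1}{n^2}x_n\to 0$, yielding $w-\tfrac{1}{n^2}x_n\to w$ in $\ran(\Id-T)$. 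You instead use only the \emph{convexity} of $\cran(\Id-T)$ from \cref{e:prop:disp}: a direct separation argument, testing along the ray $x=z_0+td$ with $d=w-\Pj{\cran(\Id-T)}w$, forces a contradiction with the uniform lower bound. Your route is slightly more elementary (no surjectivity/Minty step for $\Id-T$ is needed) and shorter; the paper's route is constructive, exhibiting an explicit approximating sequence $(\Id-T)x_n\to w$, which is in keeping with the Yosida-type arguments used elsewhere in the paper.
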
	
\begin{proof}
Let  $n\ge 1$
and observe that \cref{thm:minty} applied with $A$
replaced by $\Id-T$,
in view of 
\cref{e:prop:disp}
implies that $\ran \Big(\big(1+\tfrac{1}{n^2}\big)\Id-T\Big)=X$.
Consequently, $(\forall n\ge 1)$ $(\exists x_n\in X)$
such that $w=\big(1+\tfrac{1}{n^2}\big)x_n-Tx_n$.
Let 
$(x_n)_{n\ge 1}$ be such that 
$\big(x_n, w-\tfrac{1}{n^2}x_n\big)_{n\ge 1}$ lies in $\gra (\Id-T)$.
It is sufficient to show that 
\begin{equation}
\label{e:main:limit}
\tfrac{1}{n^2}x_n\to 0.
\end{equation}
To this end,  let $x\in X$.
First suppose that 		
\cref{prop:opp:inclusion:i} holds.
By assumption $(\exists (a,b,a^*,b^*) \in 
{\dom} A\times{\dom} B\times {\ran} A\times{\ran} B)$
such that 
\begin{equation}
	\label{e:w:connect}
	w=a-b=a^*+b^*.
\end{equation}	
Now \cref{lem:gr:Id-T} and \cref{e:w:connect} imply
\begin{align}
	\label{e:sq:version}
	\scal{x-(a+a^*)}{x-Tx -w}
	&\ge 
	\scal{\J{A}x-a}{\J{A^{-1}}x-a^*} 
	+\scal{\J{B}\R{A} x-b}{\J{B^{-1}}\R{A}x-b^*}.
\end{align}
It follows from the $3^*$ monotonicity of
$A $ and $B$ that 
\begin{equation}
\label{eq:220519:1}
\inf_{x\in X}\scal{\J{A}x-a}{\J{A^{-1}}x-a^*}  >-\infty \text{~~and~~} \inf_{x\in X}\scal{\J{B}\R{A} x-b}{\J{B^{-1}}\R{A}x-b^*}>-\infty.
\end{equation}	
Combining \cref{eq:220519:1} and  \cref{e:sq:version}, we learn that
\begin{subequations}
	\label{e:sq:version:inf}
\begin{align}
	&\qquad \inf_{x\in X}\scal{x-(a+a^*)}{x-Tx -(a-b)}
	\nonumber
	\\
	&\ge 
	\inf_{x\in X}(\scal{\J{A}x-a}{\J{A^{-1}}x-a^*}  +\scal{\J{B}\R{A} x-b}{\J{B^{-1}}\R{A}x-b^*})
	\\
	&\ge 
		\inf_{x\in X}\scal{\J{A}x-a}{\J{A^{-1}}x-a^*}  +\inf_{x\in X}\scal{\J{B}\R{A} x-b}{\J{B^{-1}}\R{A}x-b^*}>-\infty.
\end{align}
\end{subequations}
It follows from \cref{e:sq:version:inf} that $(\exists M\in \RR)$
such that $(\forall n\ge 1)$ we have $\scal{x_n-(a+a^*)}{-\tfrac{1}{n^2}x_n}\ge M$.
 Using Cauchy--Schwarz, we have
$\tfrac{1}{n^2}\norm{x_n}^2\le \scal{\tfrac{1}{n^2}x_n}{a+a^*}-M\le \tfrac{1}{n^2}\norm{x_n}\norm{a+a^*}-M\le \tfrac{1}{2n^2}(\norm{x_n}^2+\norm{a+a^*}^2)-M$.
Simplifying yields  $\tfrac{1}{n^2}\norm{x_n}^2\le\tfrac{1}{n^2}\norm{a+a^*}^2-2M\le \norm{a+a^*}^2-2M$
and we learn that the sequence $(\tfrac{x_n}{n})_{n\ge 1}$ is bounded.
Consequently, $\tfrac{1}{n^2}x_n\to 0$ and \cref{e:main:limit} is verified.

Now suppose that 		
\cref{prop:opp:inclusion:ii} holds.
By assumption $(\exists (a^*,b^*) \in  {\ran} A\times{\ran} B)$
such that 
\begin{equation}
	\label{e:w:connect:ii}
	w=a^*+b^*.
\end{equation}	
Let $(a,b)\in \dom A\times \dom B$ be such that $(a,a^*) \in \gra A$
 and $a-b=w$.
 \cref{lem:gr:Id-T} and \cref{e:w:connect:ii} imply
 \begin{equation}
 	\label{e:sq:version:ii}
 	\scal{x-(a+a^*)}{x-Tx -w}
 	\ge 
 	\scal{\J{A}x-a}{\J{A^{-1}}x-a^*} 
 	+\scal{\J{B}\R{A} x-b}{\J{B^{-1}}\R{A}x-b^*}.
 \end{equation}
It follows from the monotonicity of 
$A $ and the $3^*$ monotonicity of $B$  that 
\begin{equation}
\scal{\J{A}x-a}{\J{A^{-1}}x-a^*}   \ge 0 \text{~~and~~} \inf_{x\in X}\scal{\J{B}\R{A} x-b}{\J{B^{-1}}\R{A}x-b^*}>-\infty.
\end{equation}	
Combining this with \cref{e:sq:version:ii} we learn that
\begin{subequations}
	\label{e:sq:version:inf:ii}
	\begin{align}
		&\qquad \inf_{x\in X}\scal{x-(a+a^*)}{x-Tx -(a-b)}
		\nonumber
		\\
		&\ge 
		\inf_{x\in X}\big(\scal{\J{A}x-a}{\J{A^{-1}}x-a^*}  +\scal{\J{B}\R{A} x-b}{\J{B^{-1}}\R{A}x-b^*}\big)
		\\
		&\ge 
		\inf_{x\in X}\scal{\J{A}x-a}{\J{A^{-1}}x-a^*} +\inf_{x\in X}\scal{\J{B}\R{A} x-b}{\J{B^{-1}}\R{A}x-b^*}>-\infty.
	\end{align}
\end{subequations}
Now proceed similar to the above.
The case when \cref{prop:opp:inclusion:iii} holds is treated similarly to  
the case when \cref{prop:opp:inclusion:ii} holds.
The proof is complete.
\end{proof}	

Let $C\colon X\rras X$ be monotone. 
 Before we proceed we recall
(see, e.g.,  \cite[Proposition~25.19(i)]{BC2017})
 that
\begin{equation}
	\label{e:3*:dual}
	\text{$C$ is $3^*$ monotone $\siff $
		$C^{-1}$ is $3^*$ monotone
		$\siff $
		$C^{-\ovee}$ is $3^*$ monotone,}	
\end{equation}	
where $C^{-\ovee}=(-\Id)\circ C^{-1}\circ (-\Id)$.

We also recall that 
$T$ is self-dual  (see, e.g., \cite[Lemma~3.6~on~page~133]{EckThesis}), i.e.,
%or \cite[Corollary~4.3]{JAT2012})
  \begin{equation}
  	\label{eq:sd}
  	T_{(A,B)}=T_{(A^{-1},B^{-\ovee})}.
  \end{equation}	

\begin{theorem}[the range of $\Id -T$]
\label{main:thm}
The following implications  hold:
\begin{enumerate}
	\item
	\label{main:thm:i}
	$A$ and $B$ are $3^*$ monotone $\RA$ $	\cran(\Id-T)=\overline{D\cap R}$.
	\item
	\label{main:thm:ii}
	$(\exists C\in \{A,B\})$ $\dom C=X$  and  $C$ is $3^*$ monotone
	$\RA$ $	\cran(\Id-T)=\overline{R}$.
	\item
	\label{main:thm:iii}
	$(\exists C\in \{A,B\})$ $\ran C=X$  and  $C$ is $3^*$ monotone
	$\RA$ $	\cran(\Id-T)=\overline{D}$.
\end{enumerate}	
%Then 
%\begin{equation}
%	\cran(\Id-T)=\overline{D\cap R}.
%\end{equation}	
\end{theorem}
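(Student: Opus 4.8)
The plan is to derive all three implications by pairing the elementary inclusion $\cran(\Id-T)\subseteq\overline{D\cap R}$ from \cref{eq:inc:always} with the converse inclusion furnished by \cref{prop:opp:inclusion}, after simplifying $D\cap R$ under each hypothesis. Recall that $\cran(\Id-T)$ is closed (indeed closed and convex, by \cref{e:prop:disp}), so once one shows $D\cap R\subseteq\cran(\Id-T)$, the inclusion $\overline{D\cap R}\subseteq\cran(\Id-T)$ follows for free; matched against \cref{eq:inc:always} this yields the desired equality.

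For \cref{main:thm:i}: when $A$ and $B$ are both $3^*$ monotone, \cref{prop:opp:inclusion}\cref{prop:opp:inclusion:i} applies to every $w\in D\cap R$, whence $D\cap R\subseteq\cran(\Id-T)$ and therefore $\cran(\Id-T)=\overline{D\cap R}$. For \cref{main:thm:ii}: say $\dom B=X$ and $B$ is $3^*$ monotone (the case $\dom A=X$ being symmetric, with \cref{prop:opp:inclusion}\cref{prop:opp:inclusion:iii} used in place of \cref{prop:opp:inclusion:ii}). Since $\dom A\neq\fady$, one has $D=\dom A-X=X$, hence $D\cap R=R$ and $\overline{D\cap R}=\overline R$; furthermore, for \emph{every} $w$ the requirement $\dom A\subseteq w+\dom B=X$ of \cref{prop:opp:inclusion}\cref{prop:opp:inclusion:ii} holds trivially, so every $w\in D\cap R=R$ lies in $\cran(\Id-T)$, and again \cref{eq:inc:always} closes the argument.

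For \cref{main:thm:iii} I would reduce to \cref{main:thm:ii} via duality. By self-duality \cref{eq:sd}, $T=T_{(A,B)}=T_{(A^{-1},B^{-\ovee})}$, and $A^{-1}$ and $B^{-\ovee}$ are again maximally monotone; moreover $\dom(A^{-1})=\ran A$ and $\dom(B^{-\ovee})=-\ran B$, while $\ran(A^{-1})=\dom A$ and $\ran(B^{-\ovee})=-\dom B$. Now assume $\ran C=X$ and $C$ is $3^*$ monotone. If $C=A$, then $A^{-1}$ is $3^*$ monotone by \cref{e:3*:dual} and $\dom(A^{-1})=\ran A=X$; if $C=B$, then $B^{-\ovee}$ is $3^*$ monotone by \cref{e:3*:dual} and $\dom(B^{-\ovee})=-\ran B=X$. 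In either case the pair $(A^{-1},B^{-\ovee})$ meets the hypothesis of \cref{main:thm:ii}, so $\cran(\Id-T)=\cran(\Id-T_{(A^{-1},B^{-\ovee})})=\overline{\ran(A^{-1})+\ran(B^{-\ovee})}=\overline{\dom A-\dom B}=\overline D$, as wanted. The only delicate point in the whole argument is this last bookkeeping — tracking $\dom$, $\ran$, and $3^*$ monotonicity through $A\mapsto A^{-1}$ and $B\mapsto B^{-\ovee}$ and confirming that $\ran(A^{-1})+\ran(B^{-\ovee})=D$ — but it is routine given \cref{e:3*:dual} and \cref{eq:sd}, and presents no real obstacle.
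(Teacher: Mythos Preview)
Your proof is correct and follows essentially the same route as the paper: each part combines \cref{eq:inc:always} with the appropriate branch of \cref{prop:opp:inclusion}, and part \cref{main:thm:iii} is reduced to part \cref{main:thm:ii} via the self-duality \cref{eq:sd} together with \cref{e:3*:dual}. Your extra bookkeeping for $\dom$, $\ran$ under $A\mapsto A^{-1}$, $B\mapsto B^{-\ovee}$ is accurate and matches what the paper leaves implicit.
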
	
\begin{proof}
	\cref{main:thm:i}:
	It follows from \cref{prop:opp:inclusion}\cref{prop:opp:inclusion:i}
	that 	$		\overline{D\cap R}\subseteq \cran(\Id-T).$
	Now combine this with  \cref{eq:inc:always}.
	\cref{main:thm:ii}:
%	Suppose that \cref{main:thm:i} (respectively \cref{main:thm:ii}) holds.
Observe that in this case $D=\dom A-\dom B=X$, hence $D\cap R=R$.
First suppose that $C=B$.
	It follows from \cref{prop:opp:inclusion}\cref{prop:opp:inclusion:ii}
	that $\overline{R}=\overline{D\cap R}\subseteq \cran(\Id-T)$.
	Now combine with  \cref{eq:inc:always}. 
To prove the claim in the case $C=A$ proceed as above but use 	
 \cref{prop:opp:inclusion}\cref{prop:opp:inclusion:iii}.
\cref{main:thm:iii}:	
Observe that 
\cref{e:3*:dual} implies 
[$(\exists C\in \{A,B\})$ $\ran C=X$  and  $C$ is $3^*$ monotone]
$\siff $ [$(\exists \widetilde{C}\in \{A^{-1},B^{-\ovee}\})$ $\dom  \widetilde{C}=X$  
and  $ \widetilde{C}$ is $3^*$ monotone]
by, e.g., \cite[Proposition~25.19(i)]{BC2017}.
 Now combine with \cref{main:thm:ii} applied with $(A,B)$
  replaced by $(A^{-1},B^{-\ovee})$
  in view of \cref{eq:sd}.
\end{proof}	
\begin{theorem}[the range of $T$]
		\label{main:T:thmm}
The following implications  hold:
\begin{enumerate}
	\item
	\label{main:T:thmm:i}
	$A$ and $B$ are $3^*$ monotone
	$\RA$
	$\cran T=\overline{(\dom A-\ran B)\cap(\ran A+\dom B)}$.
	\item
	\label{main:T:thmm:ii}
	$(\exists C\in \{A,B^{-1}\})$ $\dom C=X$  and  $C$ is $3^*$ monotone
	$\RA$ $	\cran T=\overline{\ran A+\dom B}$.
	\item
	\label{main:T:thmm:iii}
	$(\exists C\in \{A,B^{-1}\})$ $\ran C=X$  and  $C$ is $3^*$ monotone
	$\RA$ $	\cran T=\overline{\dom A-\ran B}$.
\end{enumerate}		
%	Then 
%	\begin{equation}
%	\cran T=\overline{(\dom A-\ran B)\cap(\ran A+\dom B)}
%	\end{equation}
\end{theorem}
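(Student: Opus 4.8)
The plan is to deduce \cref{main:T:thmm} from the already-established \cref{main:thm} by means of a single algebraic identity that trades the Douglas--Rachford operator of $(A,B)$ for the \emph{displacement} operator $\Id-T$ of the modified pair $(A,B^{-1})$. The point is that, by the inverse resolvent identity \cref{e:iri} applied to $B$ (so that $\J{B}+\J{B^{-1}}=\Id$) together with $\R{A}=2\J{A}-\Id$, one has
\begin{align*}
T_{(A,B)}+T_{(A,B^{-1})}
&=2\Id-2\J{A}+(\J{B}+\J{B^{-1}})\R{A} \\
&=2\Id-2\J{A}+\R{A}=\Id,
\end{align*}
whence $T_{(A,B)}=\Id-T_{(A,B^{-1})}$. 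First I would record this identity, noting that it is legitimate because $B^{-1}$ is maximally monotone whenever $B$ is: indeed $\J{B^{-1}}=\Id-\J{B}$ has full domain $X$, so $\ran(\Id+B^{-1})=X$ and \cref{thm:minty} applies. Since $T_{(A,B)}$ and $T_{(A,B^{-1})}$ are single-valued with full domain, the identity yields at once $\cran T_{(A,B)}=\cran(\Id-T_{(A,B^{-1})})$, so that describing $\cran T$ for $(A,B)$ is exactly the task solved by \cref{main:thm} for the pair $(A,B^{-1})$.

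It then remains to feed $(A,B^{-1})$ into \cref{main:thm}, translating the data through the dictionary $\dom B^{-1}=\ran B$, $\ran B^{-1}=\dom B$, and recalling from \cref{e:3*:dual} that $B$ is $3^*$ monotone if and only if $B^{-1}$ is. For \cref{main:T:thmm:i}: if $A$ and $B$ are $3^*$ monotone then so are $A$ and $B^{-1}$, and \cref{main:thm}\cref{main:thm:i} applied to $(A,B^{-1})$ gives $\cran(\Id-T_{(A,B^{-1})})=\overline{(\dom A-\dom B^{-1})\cap(\ran A+\ran B^{-1})}=\overline{(\dom A-\ran B)\cap(\ran A+\dom B)}$. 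For \cref{main:T:thmm:ii}: the hypothesis (some $C\in\{A,B^{-1}\}$ has full domain and is $3^*$ monotone) is literally the hypothesis of \cref{main:thm}\cref{main:thm:ii} for $(A,B^{-1})$, whose conclusion reads $\overline{\ran A+\ran B^{-1}}=\overline{\ran A+\dom B}$. For \cref{main:T:thmm:iii}: likewise, \cref{main:thm}\cref{main:thm:iii} applied to $(A,B^{-1})$ gives $\overline{\dom A-\dom B^{-1}}=\overline{\dom A-\ran B}$. Stitching these to $\cran T_{(A,B)}=\cran(\Id-T_{(A,B^{-1})})$ finishes each part.

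I expect the only genuinely non-routine point to be the initial recognition that this is the right reduction --- that passing from $\Id-T$ to $T$ for a fixed ordered pair is the same operation as inverting the second operator; once that is seen, verifying the displayed identity is a two-line computation from \cref{e:iri}, and the rest is bookkeeping with $\dom B^{-1}=\ran B$, $\ran B^{-1}=\dom B$, and the inversion-invariance of $3^*$ monotonicity. In particular, no new analytic input is needed: neither \cref{lem:gr:Id-T} nor the Minty-type limiting argument of \cref{prop:opp:inclusion} has to be revisited, since all of that work is already packaged inside \cref{main:thm}.
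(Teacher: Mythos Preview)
Your proposal is correct and follows essentially the same route as the paper: both observe that $T_{(A,B)}=\Id-T_{(A,B^{-1})}$ and then apply \cref{main:thm} to the pair $(A,B^{-1})$, translating via $\dom B^{-1}=\ran B$, $\ran B^{-1}=\dom B$, and \cref{e:3*:dual}. Your version simply spells out the two-line verification of the identity and the maximal monotonicity of $B^{-1}$ in more detail than the paper does.
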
	
\begin{proof}
It is straightforward to verify that 
$T=\Id-(\Id-\J{A}+\J{B^{-1}}\R{A})=\Id-T_{(A,B^{-1})}$.
 Now apply \cref{main:thm}\cref{main:thm:i}--\cref{main:thm:iii} with $B$
 replaced by $B^{-1}$ in view of \cref{e:3*:dual} 
 applied with  $C$
 replaced by $B$.	 
\end{proof}	

The assumptions in \cref{main:thm}
are critical as we illustrate below.
\begin{example}
\label{ex:cne:1}
Suppose that $S\colon X\to X$ is continuous, linear, and single-valued such
that  $S$ and $-S$ are monotone and $S^2=-\gamma \Id$
where $\gamma > 0$.
Set $(A,B)=(S,-S)$.
Then the following hold:
\begin{enumerate}
\item
\label{ex:cne:1:i}
$(\forall x\in X)$ $\scal{x}{Sx}=0$.
\item
\label{ex:cne:1:ii}
Neither $S$ nor $S^*$ is $3^*$	monotone.
\item
\label{ex:cne:1:ii:iii}
$\J{A}	=\tfrac{1}{1+\gamma }( \Id-  S)$
and 
$\J{B}	=\tfrac{1}{1+\gamma }( \Id+  S)$.
\item
\label{ex:cne:1:iii}
$\R{A}=\tfrac{1}{1+\gamma}((1-\gamma)\Id-2S)$	
and 
$\R{B}=\tfrac{1}{1+\gamma}((1-\gamma)\Id+2S)$.
\item
\label{ex:cne:1:iv}
$\R{B}\R{A}=\Id$. Hence, $T=\Id$ and consequently $\Id-T\equiv 0$.
\item
\label{ex:cne:1:v}
$\{0\}=\ran(\Id-T)=\cran(\Id-T)\subsetneqq X=\overline{D\cap R}$.	
\end{enumerate}	
\end{example}
\begin{proof}
\cref{ex:cne:1:i}\&\cref{ex:cne:1:ii}:	
This is clear.
\cref{ex:cne:1:ii:iii}:
Indeed, observe that 
$\J{A}\J{B}
=(\Id+S)^{-1}(\Id-S)^{-1}
=((\Id-S)(\Id+S))^{-1}
=((1+\gamma)\Id)^{-1}
=(1+\gamma)^{-1}\Id$.
Therefore, $\J{A}=(1+\gamma)^{-1}(\Id-S)$
and  $\J{B}=(1+\gamma)^{-1}(\Id+S)$
as claimed.
\cref{ex:cne:1:iii}:
This is a direct consequence of \cref{ex:cne:1:ii:iii}.
\cref{ex:cne:1:iv}:	
Using \cref{ex:cne:1:iii}
we have 
$\R{B}\R{A}
=(1+\gamma)^{-2}((1-\gamma)\Id-2S)((1-\gamma)\Id+2S)
=(1+\gamma)^{-2}((1-\gamma)^2\Id-4S^2)
=(1+\gamma)^{-2}((1-\gamma)^2\Id+4\gamma \Id)=\Id$.			
\cref{ex:cne:1:v}:	
Clearly, $\dom A=\dom B=X$.
Moreover, by assumption $A^{-1}=-B^{-1}=-\gamma^{-1} S$.
Hence, $A$ and $B $ are surjective and 
we conclude that $\ran A=\ran B=X$
 and the conclusion follows. 
\end{proof}
	
\begin{example}
	\label{ex:cne:2}
	Suppose that $X=\RR^2 $.
	Let $u\in \RR^2$
	be such that $\norm{u}=1$.
	Let $A\colon \RR^2\to \RR^2\colon (\xi_1,\xi_2)\mapsto (-\xi_2,\xi_1)$
	(the rotator in the plane  by $\pi/2$)
	and set\footnote{Let $C$ be a nonempty closed convex subset of $X$.
	Here and elsewhere we shall use $\iota_C$
(respectively $N_C$) to denote the 
\emph{indicator function} (respectively the \emph{normal cone operator})
associated with $C$.} $B=N_{\RR\cdot u}$.
	Then 
	$A$ is \emph{not} $3^*$ monotone,
	$\J{A}=\tfrac{1}{2}(\Id-A)$,
	$\R{A} =-A$	
	and $\J{B}=\Pj{\RR\cdot u}=\scal{\cdot}{u}u$.
	Moreover, 
	the following hold:
	\begin{enumerate}
		\item
		\label{ex:cne:2:i}
		$\dom A=\ran A=\RR^2$, $\dom B=\RR\cdot u$,
		$\ran B=\{ u\}^\sperp$.
		\item
		\label{ex:cne:2:ii}
		$\overline{D\cap R}=D\cap R=\RR^2$.
		\item
		\label{ex:cne:2:iii}
		$ \Id-T=
		\tfrac{1}{2}\Pj{\RR\cdot \J{A} u}
		$.
		\item
		\label{ex:cne:2:iii:iv}
		$\ran (\Id-T)=\cran(\Id-T)=
		\RR\cdot \J{A}u
		$.
		\item
		\label{ex:cne:2:iv}
		$	\RR\cdot \J{A}u=\cran(\Id-T)\subsetneqq \overline{D\cap R}=\RR^2$.
	\end{enumerate}	
\end{example}
\begin{proof}
	It is clear that $A$ is \emph{not} $3^*$ monotone,
	that $B=\partial \iota_{\RR\cdot u}$ is  $3^*$ monotone,
that $\dom A=\ran A=\RR^2$
	and that $\dom B=\RR\cdot u$.
	The formulae for $\J{A} $ and  $\R{A}$
	follow from applying \cref{ex:cne:1}\cref{ex:cne:1:ii:iii}\&\cref{ex:cne:1:iii}
    with $\gamma=1$. The formula for 
$\J{B}$ follows from, e.g., \cite[Example~23.4]{BC2017}.
	
	\cref{ex:cne:2:i}\&\cref{ex:cne:2:ii}:
	This is clear.
	
	\cref{ex:cne:2:iii}:
	Set $u=(\alpha,\beta)$
	and observe that $\alpha^2+\beta^2=1$.
	It is straightforward to verify that
	\begin{equation}
		\J{B}=\Pj{\RR\cdot u}=\scal{\cdot}{u}u
		=\begin{pmatrix}
			\alpha^2&\alpha \beta
			\\
			\alpha \beta&\beta^2
		\end{pmatrix}	
		\text{~~hence~~}	
		R_B
		=\begin{pmatrix}
			2	\alpha^2-1&2\alpha \beta
			\\
			2	\alpha \beta&2\beta^2-1
		\end{pmatrix}	.
	\end{equation}	
On the one hand we have 
	\begin{equation}
		\label{eq:form:Id-T}
		\Id-T=\tfrac{1}{2}(\Id-R_B\R{A})
		=\begin{pmatrix}
			\tfrac{1}{2}+\alpha \beta& \tfrac{1}{2}-\alpha^2
			\\
			-\tfrac{1}{2}+\beta^2&\tfrac{1}{2}-\alpha \beta
		\end{pmatrix}
		=	\frac{1}{2}\begin{pmatrix}
			(\alpha +\beta)^2& -(\alpha^2-\beta^2)
			\\
			-(\alpha^2-\beta^2)&(\alpha -\beta)^2
		\end{pmatrix}.
	\end{equation}	
On the other hand, observe that $\J{A}u=\tfrac{1}{2}(u-Au)=\tfrac{1}{2}(\alpha+\beta,\alpha-\beta)$,
 hence $\norm{\J{A}u}^2=\tfrac{1}{2}$. Consequently, 
	\begin{equation}
\Pj{\RR\cdot \J{A}u}=\frac{1}{\norm{\J{A}u}^2}\scal{\cdot}{\J{A}u}\J{A}u
	=\begin{pmatrix}
		\alpha^2&\alpha \beta
		\\
		\alpha \beta&\beta^2
	\end{pmatrix}	
	=\begin{pmatrix}
		(\alpha +\beta)^2& -(\alpha^2-\beta^2)
		\\
		-(\alpha^2-\beta^2)&(\alpha -\beta)^2
	\end{pmatrix},
\end{equation}	
and the conclusion follows.

	\cref{ex:cne:2:iii:iv}:
	This is a direct consequence of \cref{ex:cne:2:iii}.
	\cref{ex:cne:2:iv}:
	Combine \cref{ex:cne:2:ii} and \cref{ex:cne:2:iii:iv}.
\end{proof}

We conclude this section by the following example 
which  shows in a strong way that the closures of the sets 
in \cref{main:thm} are critical.

Let	$U$ and $V$
be nonempty closed convex subset of $X$
such that $U$ is bounded.
We recall that (see, e.g., \cite[Proposition~3.42]{BC2017})
\begin{equation}
	\label{eq:sum:sets:cl}	
	\text{$U+V$ is closed.}	
\end{equation}	
\begin{example}
\label{ex:inf:dim:neq}
Suppose that $\dim X\ge 2$. 
Let $C=\ball{0}{1}$ be the closed unit ball  
 and let $U$ be a closed linear subspace of $X$.
 Set $(A,B)=(N_C,N_U)$. Then the following hold:
 \begin{enumerate}
 \item
 \label{ex:inf:dim:neq:i}
$(\dom A,\dom B,\ran A,\ran B)=(C,U,X,U^\perp)$.
\item
\label{ex:inf:dim:neq:ii}
$\overline{\dom A-\dom B}=\overline{C+U}={C+U}=\dom A-\dom B$.
\item
\label{ex:inf:dim:neq:iii}
$\cran (\Id-T)={C+U}$.
\item
\label{ex:inf:dim:neq:iv}
Set $S=\menge{u+u^\sperp}{u\in U\smallsetminus\{0\}, u^\sperp\in U^\perp, \norm{u^\sperp}=1}$.
Then $S\subseteq (C+U)\smallsetminus \ran (\Id-T)$.
Consequently, $\ran(\Id-T)\subsetneq C+U=\dom A-\dom B$.	
 \end{enumerate}
\end{example}	  
\begin{proof}
\cref{ex:inf:dim:neq:i}:
This is clear in view of, e.g., \cite[Corollary~21.25]{BC2017}.
\cref{ex:inf:dim:neq:ii}:
The first and the third identities follow from 
\cref{ex:inf:dim:neq:i}.
The second identity follows from
\cref{eq:sum:sets:cl}	applied with $V$
replaced by $C$.
\cref{ex:inf:dim:neq:iii}:
Combine 
\cref{ex:inf:dim:neq:i},
\cref{ex:inf:dim:neq:ii}
and \cref{main:thm}\cref{main:thm:iii}.
\cref{ex:inf:dim:neq:iv}:	
Clearly, $S\subseteq C+U$.
Now let $s\in S$.
Then $s=u+u^\sperp, u\in U\smallsetminus\{0\}, u^\sperp\in U^\perp, \norm{u^\sperp}=1$.
Suppose for eventual contradiction that 
$s\in \ran(\Id-T)$. Let $y\in X$ be such that
$s=y-Ty$.
Because $\Pj{U}$ is linear by, e.g., 
\cite[5.13(1)~on~page~79]{Deutsch}, we have in view of \cref{e:Id-T:desc} 
\begin{equation}
	\label{eq:cn:loc:y}
u+u^\sperp=	\Pj{C} y-2\Pj{U}\Pj{C}y+\Pj{U}y.
\end{equation}	
We proceed by examining the following cases.
\\
\textsc{Case~1:}
$y\in C$.
Then \cref{eq:cn:loc:y} yields 
$u+u^\sperp=	y-2\Pj{U}y+\Pj{U}y=\Pj{U^\perp }y\in U^\perp$.
That is, $u\in (-u^\sperp+U^\perp)\cap U=U^\perp\cap U$, hence $u=0$
which is absurd.
\\
\textsc{Case~2:}
$y\not\in C$.
In this case $\Pj{C}y=y/\norm{y}$ by, e.g., \cite[Example~3.18]{BC2017}.
Therefore,
\cref{eq:cn:loc:y} yields 
\begin{equation}
	\label{eq:cn:loc:y:i}
	u+u^\sperp=	\frac{ 1}{\norm{y}}y-2\frac{ 1}{\norm{y}}\Pj{U}y+\Pj{U}y
	=\frac{ 1}{\norm{y}}\Pj{U^\perp}y+\Big(1-\frac{ 1}{\norm{y}}\Big)\Pj{U}y.
\end{equation}	
That is, $u^\sperp=\Pj{U^\perp}y/\norm{y}$.
On the one hand, the above argument implies that  
$1=\norm{u^\sperp}=\norm{\Pj{U^\perp}y}/\norm{y}$.
Therefore,
$\norm{\Pj{U^\perp}y}^2=\norm{y}^2=\norm{\Pj{U}y}^2+\norm{\Pj{U^\perp}y}^2$.
Hence, $\Pj{U}y=0$.
On the other hand,
\cref{eq:cn:loc:y:i} implies that  
$u=(1-(1/\norm{y}))\Pj{U}y$. Altogether, we conclude that 
$u=0$ which is absurd.
Therefore, $s\not\in \ran(\Id-T) $. The proof is complete.
\end{proof}	

\section{The case $(A,B)=(\partial f,\partial g)$}
\label{sec:3}
In the remainder of this paper we assume that 
\begin{empheq}[box=\mybluebox]{equation}
	\label{e:def:fg}
	\text{$f$
		and $g$
		 are proper  lower semicontinuous convex functions on $X$ .}
\end{empheq}
We use the abbreviations
\begin{empheq}[box=\mybluebox]{equation}
	\label{e:def:fg:abb}
	\big(\prox_f,\prox_{f^*}, \prox_g,\R{f}\big)
	=
\big(\operatorname{Prox}_f,\operatorname{Prox}_{f^*},
\operatorname{Prox}_g,2\operatorname{Prox}_f-\Id\big).
\end{empheq}
In this case 
\begin{equation}
\T_{(\partial f, \partial g)}=\Id-\J{\partial f}
+\J{\partial g}\R{\partial f}=\Id-\prox_f+\prox_g\R{f}.	
\end{equation}	
The following simple lemma is stated in 
\cite[page~167]{Br-H}. 
We state the proof for the sake of completeness.
\begin{lemma}
	\label{lem:3*:fg}
Let $(y,z^*)\in \dom f\times \dom f^*$.
Then
\begin{equation}
\label{eq:conc:fg:*}
\inf_{(x,x^*)\in \gra \partial f }\scal{x-y}{x^*-z^*}>-\infty.	
\end{equation}	
Consequently, $\partial f$ is $3^* $ monotone.	
\end{lemma}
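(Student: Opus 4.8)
The plan is to use the Fenchel--Young inequality to bound the inner product $\scal{x-y}{x^*-z^*}$ from below by a constant depending only on $y$ and $z^*$. First I would recall that for $(x,x^*)\in\gra\partial f$ we have the equality $f(x)+f^*(x^*)=\scal{x}{x^*}$, which is the characterization of the subdifferential in terms of the Fenchel conjugate. This is the algebraic identity that makes the whole estimate work, so I would invoke it right away.

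Next I would expand the bilinear form: $\scal{x-y}{x^*-z^*}=\scal{x}{x^*}-\scal{x}{z^*}-\scal{y}{x^*}+\scal{y}{z^*}$. Substituting $\scal{x}{x^*}=f(x)+f^*(x^*)$, this becomes $f(x)-\scal{x}{z^*}+f^*(x^*)-\scal{y}{x^*}+\scal{y}{z^*}$. Now I would apply the Fenchel--Young inequality twice, in the two ``crossed'' forms: $f(x)-\scal{x}{z^*}\ge -f^*(z^*)$ (since $f^*(z^*)=\sup_u(\scal{u}{z^*}-f(u))\ge \scal{x}{z^*}-f(x)$), and $f^*(x^*)-\scal{y}{x^*}\ge -f^{**}(y)=-f(y)$ (using that $f$ is proper lower semicontinuous convex, so $f^{**}=f$). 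Adding these gives $\scal{x-y}{x^*-z^*}\ge -f^*(z^*)-f(y)+\scal{y}{z^*}$, which is a finite constant precisely because $y\in\dom f$ and $z^*\in\dom f^*$. Taking the infimum over $(x,x^*)\in\gra\partial f$ preserves this lower bound, establishing \cref{eq:conc:fg:*}.

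Finally, the ``consequently'' clause follows immediately: by \cref{e:def:fg}, $\partial f$ is maximally monotone with $\dom\partial f\subseteq \dom f$ and $\ran\partial f\subseteq\dom f^*$ (the latter by the inverse resolvent identity, since $(\partial f)^{-1}=\partial f^*$), so the finiteness of the infimum for all $(y,z^*)\in\dom f\times\dom f^*$ in particular covers all $(y,z^*)\in\dom\partial f\times\ran\partial f$, which is exactly the definition of $3^*$ monotonicity.

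There is essentially no obstacle here: the only thing to be careful about is not conflating $\dom f$ with $\dom\partial f$ (the latter can be strictly smaller), which is why the lemma is phrased with $\dom f\times\dom f^*$ and the $3^*$ monotonicity statement follows as a corollary rather than being literally identical. The proof is a textbook application of Fenchel--Young, so I would keep it to three or four lines.
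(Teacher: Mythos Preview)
Your proof is correct and essentially the same as the paper's: both arrive at the explicit lower bound $\scal{x-y}{x^*-z^*}\ge \scal{y}{z^*}-f(y)-f^*(z^*)$ and then invoke $\dom\partial f\subseteq\dom f$, $\ran\partial f=\dom\partial f^*\subseteq\dom f^*$ for the $3^*$ conclusion. The only cosmetic difference is that the paper starts from the subgradient inequality $f(y)\ge f(x)+\scal{x^*}{y-x}$ and the definition of $f^*$ (so never needs $f^{**}=f$), whereas you start from the Fenchel--Young equality $f(x)+f^*(x^*)=\scal{x}{x^*}$ and then apply two Fenchel--Young inequalities.
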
	
\begin{proof}
Let $(x,x^*)\in \gra \partial f$. 
It follows from the subgradient inequality that
$f(y)\ge f(x)+\scal{x^*}{y-x}=f(x)+\scal{x^*-z^*}{y-x}+\scal{z^*}{y-x}$.
Rearranging yields
\begin{subequations}
\begin{align}
\scal{x-y}{x^*-z^*}
&\ge f(x)-f(y)-\scal{z^*}{x}+\scal{y}{z^*} =-f(y)-(\scal{z^*}{x}-f(x))+\scal{y}{z^*} 
\\ 
&\ge -f(y)-f^*(z^*)+\scal{y}{z^*}.
\end{align}		
\end{subequations}	
This verifies 	\cref{eq:conc:fg:*}.
The $3^*$ monotonicity of $\partial f$
follows from combining \cref{eq:conc:fg:*}  and  the fact that
$\dom \partial f \subseteq \dom f$
and 
$\dom \partial f^* \subseteq \dom f^*$.
\end{proof}	

\begin{proposition}
	\label{prop:opp:inclusion:fg}
	Let $w\in ({\dom} f-{\dom} g)\cap ({\dom} f^*+{\dom} g^*)$.
    Set $(A,B)=(\partial f,\partial g)$.
	Then $w\in \overline{\ran} (\Id-T)$.
\end{proposition}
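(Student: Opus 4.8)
The plan is to follow the proof of \cref{prop:opp:inclusion}\cref{prop:opp:inclusion:i} essentially line by line, with one key replacement: where that proof invokes the bare $3^*$ monotonicity of $A$ and $B$ to bound certain inner products from below, I will instead invoke \cref{lem:3*:fg}. The point is that $3^*$ monotonicity of $\partial f$ only guarantees $\inf_{(x,x^*)\in\gra\partial f}\scal{x-y}{x^*-z^*}>-\infty$ for $(y,z^*)\in\dom\partial f\times\ran\partial f$, whereas \cref{lem:3*:fg} gives the same conclusion for the (generally larger) set $(y,z^*)\in\dom f\times\dom f^*$. This is precisely what is needed here, since the hypothesis places $w$ in $(\dom f-\dom g)\cap(\dom f^*+\dom g^*)$, a set that may strictly contain $(\dom\partial f-\dom\partial g)\cap(\ran\partial f+\ran\partial g)$; hence a direct appeal to \cref{prop:opp:inclusion} would not cover the present statement.

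Concretely, I would first unpack the hypothesis: fix $a\in\dom f$ and $b\in\dom g$ with $a-b=w$, and, independently, $a^*\in\dom f^*$ and $b^*\in\dom g^*$ with $a^*+b^*=w$. Next, exactly as in \cref{prop:opp:inclusion}, use the maximal monotonicity of $\Id-T$ (see \cref{e:prop:disp}) together with \cref{thm:minty} applied to $\Id-T$ to obtain, for every $n\ge1$, a point $x_n\in X$ with $\big(x_n,\,w-\tfrac1{n^2}x_n\big)\in\gra(\Id-T)$; equivalently $x_n-Tx_n-w=-\tfrac1{n^2}x_n$. Since $w-\tfrac1{n^2}x_n\in\ran(\Id-T)$ for every $n$, it suffices to show $\tfrac1{n^2}x_n\to0$.

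The crux is \cref{lem:gr:Id-T} with $x=x_n$ and the $a,b,a^*,b^*$ just fixed: because $a-b=w=a^*+b^*$ (so that $a-a^*-b=b^*$), its right-hand side simplifies to $\scal{\prox_f x_n-a}{\prox_{f^*}x_n-a^*}+\scal{\prox_g\R{f}x_n-b}{\prox_{g^*}\R{f}x_n-b^*}$. By \cref{thm:minty} applied to the maximally monotone operators $\partial f$ and $\partial g$, we have $(\prox_f x_n,\prox_{f^*}x_n)\in\gra\partial f$ and $(\prox_g\R{f}x_n,\prox_{g^*}\R{f}x_n)\in\gra\partial g$. Hence \cref{lem:3*:fg}, used with $(y,z^*)=(a,a^*)\in\dom f\times\dom f^*$ for the first term and $(y,z^*)=(b,b^*)\in\dom g\times\dom g^*$ for the second, bounds each summand below by a constant independent of $n$. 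Therefore $\scal{x_n-(a+a^*)}{-\tfrac1{n^2}x_n}\ge M$ for some $M\in\RR$ and all $n\ge1$, and the Cauchy--Schwarz estimate from the proof of \cref{prop:opp:inclusion} yields $\tfrac1{n^2}\normsq{x_n}\le\normsq{a+a^*}-2M$. Thus $(x_n/n)_{n\ge1}$ is bounded, so $\tfrac1{n^2}x_n\to0$, and consequently $w=\lim_n\big(w-\tfrac1{n^2}x_n\big)\in\cran(\Id-T)$.

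The only genuinely delicate point I anticipate is the bookkeeping of which sets the various points inhabit: one must notice that $a,b,a^*,b^*$ need \emph{not} lie in $\dom\partial f$, $\dom\partial g$, $\ran\partial f$, $\ran\partial g$ (only in $\dom f$, $\dom g$, $\dom f^*$, $\dom g^*$), that \cref{lem:gr:Id-T} is valid for \emph{arbitrary} quintuples in $X^5$, and that it is the resolvent images $\prox_f x_n$, $\prox_{f^*}x_n$, $\prox_g\R{f}x_n$, $\prox_{g^*}\R{f}x_n$ — not $a,b,a^*,b^*$ — that must land in $\gra\partial f$ and $\gra\partial g$ for \cref{lem:3*:fg} to apply. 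Everything else is a transcription of the proof of \cref{prop:opp:inclusion}\cref{prop:opp:inclusion:i}.
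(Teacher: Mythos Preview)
Your proposal is correct and mirrors the paper's proof essentially step for step: the paper too constructs $(x_n)_{n\ge1}$ with $(x_n,w-\tfrac{1}{n^2}x_n)\in\gra(\Id-T)$, picks $(a,b,a^*,b^*)\in\dom f\times\dom g\times\dom f^*\times\dom g^*$ with $w=a-b=a^*+b^*$, applies \cref{lem:gr:Id-T}, invokes \cref{lem:3*:fg} (rather than bare $3^*$ monotonicity) to bound the two inner products from below, and then finishes via the Cauchy--Schwarz estimate from \cref{prop:opp:inclusion}\cref{prop:opp:inclusion:i}. Your explicit remark about why \cref{prop:opp:inclusion} does not directly apply, and about which points must land in $\gra\partial f$ and $\gra\partial g$, is exactly the right emphasis.
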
	

\begin{proof}
Proceeding similar to the proof  \cref{prop:opp:inclusion}, 
let 
$(x_n)_{n\ge 1}$ be such that 
$(x_n, w-\tfrac{1}{n^2}x_n)_{n\ge 1}$ lies in $\gra (\Id-T)$.
Our goal is to show that 
\begin{equation}
	\label{w:eq}
\tfrac{1}{n^2}x_n\to 0.
\end{equation}	
Obtain
$(a,b,a^*,b^*)$
from $ \dom f\times \dom g\times\dom f^*\times \dom g^*$
such that 
\begin{equation}
	\label{e:w:connect:fg}
	w=a-b=a^*+b^*.
\end{equation}	
Recall that \cref{lem:gr:Id-T} and \cref{e:w:connect:fg} imply
\begin{align}
	\label{e:sq:version:fg}
	\scal{x-(a+a^*)}{x-Tx -w}
	&\ge 
	\scal{\prox_fx-a}{\prox_{f^*}x-a^*} 
	+\scal{\prox_g\R{f}x-b}{\prox_{g^*}\R{f}x-b^*}.
\end{align}
It follows from \cref{lem:3*:fg} applied to $f$ and $g$
respectively 
that 
\begin{equation}
	\label{eq:prox:bd}
	\inf_{x\in X}\scal{\prox_fx-a}{\prox_{f^*}x-a^*} >-\infty 
	\text{~~and~~} \inf_{x\in X}\scal{\prox_g\R{A} x-b}{\prox_{g^*}\R{A}x-b^*}>-\infty.
\end{equation}	
Combining \cref{eq:prox:bd}  with \cref{e:sq:version:fg} 
in view of \cref{e:w:connect:fg} we learn that
\begin{subequations}
	\label{e:sq:version:inf:fg}
	\begin{align}
		&\qquad \inf_{x\in X}\scal{x-(a+a^*)}{x-Tx -(a-b)}
		\nonumber
		\\
		&\ge 
		\inf_{x\in X}\big(\scal{\prox_fx-a}{\prox_{f^*}x-a^*}  
		+\scal{\prox_g\R{f} x-b}{\prox_{g^*}\R{f}x-b^*}\big)
		\\
		&\ge 
		\inf_{x\in X}\scal{\prox_fx-a}{\prox_{f^*}x-a^*} 
		 +\inf_{x\in X}\scal{\prox_g\R{f} x-b}{\prox_{g^*}\R{f}x-b^*}>-\infty.
	\end{align}
\end{subequations}
Now proceed similar to the proof of 
\cref{prop:opp:inclusion}\cref{prop:opp:inclusion:i}
to conclude that $(x_n/n)_{n\ge 1}$ is bounded hence 
\cref{w:eq} holds.
\end{proof}	

\begin{theorem}
	\label{thm:main:fg}
    Set $T=T_{(\partial f,\partial g)}$. Then the following hold:
    \begin{enumerate}
    	\item
    	\label{thm:main:fg:i}
    	$\cran (\Id-T)
    	=\overline{(\dom  f-\dom g)
    		\cap(\dom  f^*+\dom  g^*)}$
    	\item
    	\label{thm:main:fg:ii}
    	$\cran T=\overline{(\dom  f-\dom g^*)
    		\cap(\dom f^*+\dom  g)}$
    \end{enumerate}	
\end{theorem}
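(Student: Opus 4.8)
The plan is to obtain both parts of \cref{thm:main:fg} from results already established, handling \cref{thm:main:fg:i} by a short two--inclusion argument and then reducing \cref{thm:main:fg:ii} to it by a duality substitution. Throughout one uses that $f$ and $g$ being proper lower semicontinuous convex forces $\partial f$ and $\partial g$ to be maximally monotone, so that $T=T_{(\partial f,\partial g)}$ indeed fits the standing framework of the paper.

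For \cref{thm:main:fg:i}, set $E:=(\dom f-\dom g)\cap(\dom f^*+\dom g^*)$. The inclusion $\overline E\subseteq\cran(\Id-T)$ is immediate: by \cref{prop:opp:inclusion:fg} every $w\in E$ lies in $\cran(\Id-T)$, and the latter set is closed (indeed closed convex, by \cref{e:prop:disp}), so one may pass to the closure. For the reverse inclusion I would invoke the always--valid containment \cref{eq:inc:always}, that is $\cran(\Id-T)\subseteq\overline{D\cap R}$ with $D=\dom\partial f-\dom\partial g$ and $R=\ran\partial f+\ran\partial g$, and then estimate $D$ and $R$ using only elementary facts: $\dom\partial f\subseteq\dom f$ and $\dom\partial g\subseteq\dom g$ give $D\subseteq\dom f-\dom g$, while $\ran\partial f=\dom(\partial f)^{-1}=\dom\partial f^*\subseteq\dom f^*$ and the symmetric statement for $g$ give $R\subseteq\dom f^*+\dom g^*$. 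Hence $D\cap R\subseteq E$, so $\cran(\Id-T)\subseteq\overline{D\cap R}\subseteq\overline E$. The two inclusions combine to $\cran(\Id-T)=\overline E$, which is \cref{thm:main:fg:i}; as a byproduct one records $\overline{D\cap R}=\overline E$, so enlarging $\dom\partial f$ to $\dom f$ leaves the closure unchanged.

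For \cref{thm:main:fg:ii}, recall from the proof of \cref{main:T:thmm} the elementary identity $T_{(A,B)}=\Id-T_{(A,B^{-1})}$, valid for arbitrary maximally monotone $A$ and $B$, which rests only on the inverse resolvent identity $\Id-\J{B^{-1}}=\J{B}$ together with the definition of $T$. Taking $(A,B)=(\partial f,\partial g)$ and using $(\partial g)^{-1}=\partial g^*$ yields $T_{(\partial f,\partial g)}=\Id-T_{(\partial f,\partial g^*)}$, hence $\cran T_{(\partial f,\partial g)}=\cran(\Id-T_{(\partial f,\partial g^*)})$. Since $g^*$ is again proper lower semicontinuous convex, applying \cref{thm:main:fg:i} with $g$ replaced by $g^*$ identifies the right--hand side with $\overline{(\dom f-\dom g^*)\cap(\dom f^*+\dom g^{**})}$, and $g^{**}=g$ by Fenchel--Moreau, which is exactly \cref{thm:main:fg:ii}.

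The argument is short precisely because the analytic work is concentrated in \cref{prop:opp:inclusion:fg}. The one point that requires care is the reverse inclusion in \cref{thm:main:fg:i}: one must route it through \cref{eq:inc:always} rather than through \cref{main:thm}\cref{main:thm:i}, since the latter only yields the closure of $\dom\partial f-\dom\partial g$ intersected with $\ran\partial f+\ran\partial g$, and it is the containments $\dom\partial f\subseteq\dom f$ and $\ran\partial f\subseteq\dom f^*$ (together with their counterparts for $g$) that upgrade this to $\overline E$.
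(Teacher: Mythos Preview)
Your proposal is correct and follows essentially the same route as the paper: the paper also sandwiches $\cran(\Id-T)$ between $\overline{(\dom f-\dom g)\cap(\dom f^*+\dom g^*)}$ (via \cref{prop:opp:inclusion:fg}) and $\overline{(\dom\partial f-\dom\partial g)\cap(\dom\partial f^*+\dom\partial g^*)}$ (via \cref{eq:inc:always} together with $\ran\partial f=\dom\partial f^*$), and then closes the loop with the containments $\dom\partial f\subseteq\dom f$, $\dom\partial f^*\subseteq\dom f^*$ and their $g$-counterparts; part \cref{thm:main:fg:ii} is likewise obtained in the paper by the substitution $g\leftrightarrow g^*$ through $T=\Id-T_{(\partial f,\partial g^*)}$.
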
	
\begin{proof}
\cref{thm:main:fg:i}:
Indeed, by \cref{prop:opp:inclusion:fg}
 and \cref{eq:inc:always} applied with 
$(A,B)$ replaced with $(\partial f, \partial g)$
we have 
\begin{subequations}
	\label{eq:dom:fg:subdif}
\begin{align}
&\quad\overline{(\dom  f-\dom g)
\cap(\dom  f^*+\dom  g^*)}
\\
&\subseteq \cran(\Id-T)	
\\
&\subseteq\overline{(\dom  \partial f-\dom\partial g)
	\cap(\dom\partial  f^*+\dom\partial  g^*)}
\\
&\subseteq \overline{(\dom  f-\dom g)
	\cap(\dom  f^*+\dom  g^*)}.
\end{align}		
\end{subequations}	
\cref{thm:main:fg:ii}:
Observe that $T=\Id-T_{(\partial f,\partial g^*)}$.
Now combine with \cref{thm:main:fg:i}
applied with $g$ replaced by $g^{*}$.
This completes the proof.
 \end{proof}	

As a byproduct of the above results we
obtain the following corollary.

\begin{corollary}
We have
\begin{equation}
\overline{(\dom  \partial f-\dom\partial g)
	\cap(\dom\partial  f^*+\dom\partial  g^*)}
=	
\overline{(\dom  f-\dom g)
	\cap(\dom  f^*+\dom  g^*)}.
\end{equation}		
\end{corollary}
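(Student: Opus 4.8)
The plan is to read the identity off the chain of inclusions already established inside the proof of \cref{thm:main:fg}\cref{thm:main:fg:i}, so that essentially no new argument is required. Write $T=T_{(\partial f,\partial g)}$, and recall the conjugacy identities $(\partial f)^{-1}=\partial f^*$ and $(\partial g)^{-1}=\partial g^*$; since $\ran S=\dom S^{-1}$ for any set-valued map, these give $\ran\partial f=\dom\partial f^*$ and $\ran\partial g=\dom\partial g^*$. One of the two inclusions is elementary: from $\dom\partial f\subseteq\dom f$, $\dom\partial g\subseteq\dom g$, $\dom\partial f^*\subseteq\dom f^*$, $\dom\partial g^*\subseteq\dom g^*$, and the monotonicity of set difference, set sum, intersection and closure with respect to $\subseteq$, I would obtain immediately
\[
\overline{(\dom\partial f-\dom\partial g)\cap(\dom\partial f^*+\dom\partial g^*)}\ \subseteq\ \overline{(\dom f-\dom g)\cap(\dom f^*+\dom g^*)}.
\]

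For the reverse inclusion I would chain two facts already proved. First, \cref{prop:opp:inclusion:fg} gives $(\dom f-\dom g)\cap(\dom f^*+\dom g^*)\subseteq\cran(\Id-T)$, and taking closures (the right-hand side being closed) yields $\overline{(\dom f-\dom g)\cap(\dom f^*+\dom g^*)}\subseteq\cran(\Id-T)$. Second, \cref{eq:inc:always} applied with $(A,B)$ replaced by $(\partial f,\partial g)$ gives $\cran(\Id-T)\subseteq\overline{D\cap R}$ with $D=\dom\partial f-\dom\partial g$ and $R=\ran\partial f+\ran\partial g$; invoking the identities $\ran\partial f=\dom\partial f^*$, $\ran\partial g=\dom\partial g^*$ rewrites this as $\cran(\Id-T)\subseteq\overline{(\dom\partial f-\dom\partial g)\cap(\dom\partial f^*+\dom\partial g^*)}$. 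Concatenating the two displays produces
\[
\overline{(\dom f-\dom g)\cap(\dom f^*+\dom g^*)}\ \subseteq\ \overline{(\dom\partial f-\dom\partial g)\cap(\dom\partial f^*+\dom\partial g^*)},
\]
and together with the elementary inclusion above this gives mutual containment of two closed sets, hence the claimed equality.

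I do not expect any real obstacle in the corollary itself: all the substance has already been spent in \cref{prop:opp:inclusion:fg}, which in turn relies on the key inequality of \cref{lem:gr:Id-T}, the $3^*$ monotonicity of $\partial f$ and $\partial g$ furnished by \cref{lem:3*:fg} to produce the uniform lower bounds \cref{eq:prox:bd}, and the Cauchy--Schwarz boundedness estimate for the sequence $(x_n/n)_{n\ge 1}$. The one point worth stating carefully is the conversion between $\ran\partial f$, $\ran\partial g$ and $\dom\partial f^*$, $\dom\partial g^*$ via $(\partial f)^{-1}=\partial f^*$, so that \cref{eq:inc:always} indeed outputs precisely the set occurring in the statement; I would record this as a one-line remark before applying \cref{eq:inc:always}.
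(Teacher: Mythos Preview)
Your proposal is correct and follows essentially the same route as the paper: the corollary is read off directly from the chain of inclusions \cref{eq:dom:fg:subdif} established in the proof of \cref{thm:main:fg}\cref{thm:main:fg:i}, which is precisely what you reconstruct. The only addition you make explicit is the identification $\ran\partial f=\dom\partial f^*$ and $\ran\partial g=\dom\partial g^*$ needed to rewrite the output of \cref{eq:inc:always}; the paper uses this silently.
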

\begin{proof}
This is s direct consequence of \cref{eq:dom:fg:subdif}.	
\end{proof}

\begin{example}
\label{ex:twosets:onebdd}
Suppose that  $U$ and $V$
are  nonempty closed convex subset of $X$.
Set $f=\iota_U$
 and $g=\iota_V$.
 Suppose that
 $U$ is bounded.
 Then the following hold:
\begin{enumerate}
\item
\label{ex:twosets:onebdd:i}
$\cran(\Id-T)=U-V$.
\item
\label{ex:twosets:onebdd:ii}
$\cran T=U-(\rec V)^\ominus$.
\end{enumerate}	
\end{example}	
\begin{proof}
Observe that $(\dom f, \dom g)
=(\overline{\dom} f, \overline{\dom} g)
=(U,V)$.
Moreover, $(\dom f^*,\overline{\dom} g^* )=(X, (\rec V)^\ominus)$.
\cref{ex:twosets:onebdd:i}:
It follows from \cref{thm:main:fg}\cref{thm:main:fg:i}
that  
$\cran(\Id-T)=\overline{(U-V)\cap X}
=\overline{U-V}$.
Now combine with \cref{eq:sum:sets:cl}
applied with $V$ replaced by $-V$.
\cref{ex:twosets:onebdd:ii}:	
It follows from \cref{thm:main:fg}\cref{thm:main:fg:ii}
that  
$\cran T=\overline{\dom f -\dom g^*}
=\overline{\overline{\dom} f -\overline{\dom} g^*}
=\overline{U-(\rec V)^\ominus}
$.	
Now combine with \cref{eq:sum:sets:cl}
applied with $V$ replaced by $-(\rec V)^\ominus$.
\end{proof}

\section{The sets $D$ and $R$ and 
	their corresponding minimal norm vectors} 
\label{sec:4}
Because $A$, $B$ and $\Id-T$ are maximally monotone,
we know that the sets $\dom A$, $\dom B$, 
$\ran A$,  $\ran B$ and $\ran (\Id-T)$ have \emph{convex closures}
(see  \cite[Theorem~31.2]{Simons2})
 i.e., 
\begin{equation}
	\label{eq:con:cl}
	\text{$\overline{D}$, $\overline{R}$ and $\cran(\Id-T)$ are convex. }	
\end{equation}	
Consequently, the following vectors 
\begin{equation}
	\label{e:def:vD:vR}
	v= P_{\cran  (\Id-T)}(0),
	\quad
	v_D= P_{\overline{\dom A-\dom B}}(0),
	\quad
	v_R= P_{\overline{\ran A+\ran B}}
	(0)
\end{equation}	
are well defined.
\begin{remark}\
\label{rem:intersection}
\begin{enumerate}
\item
\label{rem:intersection:i}
If $X$ is finite-dimensional, then we know more (see, e.g, 
\cite[Lemma~5.1(i)]{MOR}):
$D$ and $R$ are nearly convex; 
so in particular
by \cite[Theorem~2.16]{BMW}, $\overline{D} = \overline{\ri D}$
and $\overline{R} = \overline{\ri R}$.
Moreover,  Minty's theorem applied to $B$ 
yields $\ri D-\ri R=\ri (D-R)=\ri (\dom A-\dom B-\ran A-\ran B)=\ri X=X$.
Hence, $\ri D \cap \ri R\neq \fady$.  Consequently, we learn that 
\begin{equation}
	\label{eq:sets:cl:eq}
\overline{D\cap R}=\overline{D}\cap \overline{R}.
\end{equation}	
\item
\label{rem:intersection:ii}
We do not know whether or not such an identity \cref{eq:sets:cl:eq}
survives in infinite-dimensional Hilbert spaces.
Indeed, in view of  \cref{prop:affine:v} below,
on the one hand, 
any counterexample must feature that neither of the operators is 
an affine
relation or that
none of the operators has a bounded domain or a bounded range.
On the other hand, \cref{prop:fat:sets} implies that
one has to avoid scenarios when 
both $D$ and $R$ has an nonempty interior.

\end{enumerate}
\end{remark}

The next result provides some sufficient conditions where
\cref{eq:sets:cl:eq} holds  in infinite-dimensional Hilbert spaces.
\begin{proposition}
	\label{prop:affine:v}
	Suppose that one of the following conditions hold:
	\begin{enumerate}
		\item
		\label{prop:affine:v:i}
		$(\exists C\in \{A,B\})$ such that $\dom C$ and $\ran C$ are affine.
		\item
		\label{prop:affine:v:i:i}
		$(A,B)=(N_K,N_L)$ where 
		$K$ and $L$ are nonempty closed convex cones of $X$
		and $K^\ominus+L^\ominus $ is closed.
		\item
		\label{prop:affine:v:ii}
		$(\exists C\in \{A,B\})$ $\dom C$ is bounded  or $\ran C$ is bounded.
		\item
		\label{prop:affine:v:iii}
		$(\exists C\in \{A,B\})$  %$C$ is $3^*$ monotone and 
		$\dom C=X$ or $\ran C=X$.
	\end{enumerate}
	Then $\overline{D\cap R}=\overline{D}\cap \overline{R}$.
\end{proposition}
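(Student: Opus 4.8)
Since $D\cap R\subseteq D$ and $D\cap R\subseteq R$, the inclusion $\overline{D\cap R}\subseteq\overline D\cap\overline R$ is automatic, so in each case the plan is to prove the reverse inclusion. The common device will be the Minty identity: because $A,B$ are maximally monotone, \cref{thm:minty} gives $\ran(\Id+A)=\ran(\Id+B)=X$, and since $\ran(\Id+A)\subseteq\dom A+\ran A$ this forces $\dom A+\ran A=X$, $\dom B+\ran B=X$, and hence $D-R=\dom A-\dom B-\ran A-\ran B=X$. I would first clear the two ``soft'' cases. For \cref{prop:affine:v:iii}: if $\dom C=X$ then $D=X$, and if $\ran C=X$ then $R=X$, so the identity is trivial. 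For \cref{prop:affine:v:ii}: if, say, $\dom A$ is bounded, then for each $y\in X$ the nonexpansive map $x\mapsto\J{A}(x+y)$ carries the bounded closed convex set $\cconv\dom A$ into $\dom A$, so by the Browder--G\"ohde--Kirk theorem it has a fixed point, i.e.\ $y\in\ran A$; thus $\ran A=X$ and we are back in \cref{prop:affine:v:iii}. The cases ``$\dom B$ bounded'', ``$\ran A$ bounded'', ``$\ran B$ bounded'' are identical, applied to $B$, $A^{-1}$, $B^{-1}$; equivalently, one just invokes that a maximally monotone operator with bounded domain is surjective.

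For \cref{prop:affine:v:i:i} the plan is to compute the pieces. For a closed convex cone $K$ one has $\dom N_K=K$ and $\ran N_K=K^{\ominus}$: if $x\in K$ and $u\in N_K(x)$, then testing $\scal{u}{y-x}\le 0$ at $y=0$ and at $y=2x\in K$ gives $\scal{u}{x}=0$, whence $\scal{u}{y}\le 0$ for all $y\in K$. Thus $D=K-L$ and $R=K^{\ominus}+L^{\ominus}$ are convex cones, and $R$ is closed by hypothesis; combined with $D-R=X$, i.e.\ $0\in\inte(D-R)$, the standard closure-of-intersection result for convex sets under such a qualification (see, e.g., \cite{BC2017}) yields $\overline{D\cap R}=\overline D\cap R=\overline D\cap\overline R$.

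The real work is \cref{prop:affine:v:i}, and here is the route I would take. Let $C\in\{A,B\}$ have $\dom C=c+M$ and $\ran C=c^{*}+N$ with $M,N$ closed subspaces; Minty applied to $C$ gives $M+N=X$. Since one of $\dom A,\dom B$ (namely $\dom C$) is a translate of $M$, we have $D+M=D$, and likewise $R+N=R$; hence, with $E:=P_{M^{\perp}}(D)\subseteq M^{\perp}$ and $F:=P_{N^{\perp}}(R)\subseteq N^{\perp}$, one gets $D=P_{M^{\perp}}^{-1}(E)$, $R=P_{N^{\perp}}^{-1}(F)$, and (projectors being continuous) $\overline D=P_{M^{\perp}}^{-1}(\overline E)$, $\overline R=P_{N^{\perp}}^{-1}(\overline F)$. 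Put $\Phi:=\bigl(P_{M^{\perp}},P_{N^{\perp}}\bigr)\colon X\to M^{\perp}\times N^{\perp}$, so $D\cap R=\Phi^{-1}(E\times F)$ and $\overline D\cap\overline R=\Phi^{-1}(\overline E\times\overline F)$. The decisive point is that $\Phi$ is surjective: given $(e,f)$, since $f-e\in X=M+N$ one can solve $e+m=f+n$ with $m\in M$, $n\in N$, and then $x:=e+m$ satisfies $\Phi(x)=(e,f)$. As $\ker\Phi=M\cap N$ is closed, hence complemented in the Hilbert space $X$, the open mapping theorem provides a bounded linear right inverse $\Psi$ of $\Phi$. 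Finally, for $w\in\overline D\cap\overline R$ choose $e_n\in E$ with $e_n\to P_{M^{\perp}}w$ and $f_n\in F$ with $f_n\to P_{N^{\perp}}w$, and set $x_n:=\Psi(e_n,f_n)+(w-\Psi\Phi w)$; then $\Phi(x_n)=(e_n,f_n)$, so $x_n\in D\cap R$, while $x_n\to w$, giving $w\in\overline{D\cap R}$.

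The hard part is exactly \cref{prop:affine:v:i}: in infinite dimensions $D$ and $R$ need not be nearly convex, so the relative-interior calculus used in \cref{rem:intersection} is unavailable and no convex-intersection theorem applies directly; what rescues the argument is the rigid ``saturated'' structure $D=D+M$, $R=R+N$ forced by affineness, together with the Minty relation $M+N=X$, which is precisely what makes $\Phi$ surjective and lets approximating sequences be lifted. A secondary point to watch in \cref{prop:affine:v:i:i} is which form of the closure-of-intersection theorem is cited: the closedness of $K^{\ominus}+L^{\ominus}$ is used exactly to place $R$ in the role of the closed convex set.
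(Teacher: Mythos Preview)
Your handling of \cref{prop:affine:v:ii} and \cref{prop:affine:v:iii} matches the paper's: reduce ``$\dom C$ bounded'' to ``$\ran C=X$'' (the paper simply cites \cite[Corollary~21.25]{BC2017}, which is exactly the fixed-point argument you spell out) and then both cases are trivial.

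For \cref{prop:affine:v:i} you take a genuinely different route---encoding $D$ and $R$ as preimages under $\Phi=(P_{M^\perp},P_{N^\perp})$ and lifting approximants through a bounded right inverse of $\Phi$---whereas the paper builds the explicit approximant $w_n=\J{A}(a_n^*+b_n^*)+\J{A^{-1}}(a_n-b_n)$ directly from resolvents, using only the algebraic fact that $p+q-r\in S$ whenever $p,q,r$ lie in an affine set $S$. Your argument is clean, but it needs the direction spaces $M,N$ to be \emph{closed}: otherwise $\ker P_{M^\perp}=\overline{M}\supsetneq M$, so $P_{M^\perp}^{-1}(E)=D+\overline{M}$, and your key identification $D\cap R=\Phi^{-1}(E\times F)$ fails. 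The proposition only assumes ``affine,'' and a maximally monotone operator can have a dense non-closed linear domain (e.g.\ an unbounded positive diagonal operator on $\ell^2$), so this is a genuine extra hypothesis that the paper's resolvent construction does not require.

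For \cref{prop:affine:v:i:i} your appeal to a ``standard closure-of-intersection result'' under the qualification $0\in\inte(D-R)$ is a gap: no such statement for a \emph{non-closed} convex $D$ is available in \cite{BC2017}, and Attouch--Br\'ezis--type arguments need both indicators to be lower semicontinuous, which $\iota_D$ with $D=K-L$ need not be. The paper instead exploits the specific cone structure via the Moreau decomposition: given $w\in\overline{K-L}\cap(K^\ominus+L^\ominus)$ and $k_n-l_n\to w$, set $w_n=P_{(K\cap L)^\ominus}(k_n-l_n)$; then $w_n\to P_{(K\cap L)^\ominus}w=w$ because $(K\cap L)^\ominus=\overline{K^\ominus+L^\ominus}=K^\ominus+L^\ominus\ni w$, while $w_n=k_n-\bigl(l_n+P_{K\cap L}(k_n-l_n)\bigr)\in K-L$ since $L$ is a convex cone. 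This is short and self-contained; your abstract route would still owe the reader a proof of the intersection lemma you invoke.
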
	

\begin{proof}
	\cref{prop:affine:v:i}:
	Indeed, suppose that $C=A$.  
	Let $w\in \overline{D}\cap \overline{R}$ and 
	let $(a_n,b_n,a_n^*,b_n^*)_\nnn$ be a sequence in 
	$\dom A\times\dom B\times \ran A\times \ran B$
	such that $a_n-b_n\to w$ and $a_n^*+b_n^*\to w$.
	Set $(\forall \nnn)$ $w_n= \J{A} (a_n^*+b_n^*)+\J{A^{-1}}(a_n-b_n)$.
	Because $\J{A}$, as is $\J{A^{-1}}$, is firmly nonexpansive, it is continuous.
	Therefore 
	\begin{equation}
		\label{e:wn:loc:i}
		w_n\to \J{A} w+\J{A^{-1}}w=w.
	\end{equation}
	We claim that 
	\begin{equation}
		\label{e:wn:loc}
		\text{$(w_n)_\nnn$ lies in $D\cap R$}. 	
	\end{equation}	
	Indeed, on the one hand because $\dom A=\ran \J{A}$ is affine we have 
	\begin{subequations}
		\begin{align}	
			w_n
			&= \J{A} (a_n^*+b_n^*)+a_n-b_n-\J{A}(a_n-b_n)
			\\
			&=\J{A}  (a_n^*+b_n^*)+a_n-\J{A} (a_n-b_n)-b_n \in \dom A-\dom B=D.
		\end{align}
	\end{subequations}	
	On the other hand, because $\ran A=\ran \J{A^{-1}}$ is affine we have 
	\begin{subequations}
		\begin{align}	
			w_n
			&= \J{A^{-1}} (a_n-b_n)+a_n^*+b_n^*-\J{A^{-1}} (a_n^*+b_n^*)
			\\
			&=\J{A^{-1}} (a_n-b_n)+a^*_n-\J{A^{-1}}(a_n^*+b_n^*)+b^*_n \in \ran A+\ran B=R.
		\end{align}
	\end{subequations}	
	This proves \cref{e:wn:loc}. Now combine with \cref{e:wn:loc:i}.
	The proof in the case $C=B$ is similar. 
	
	\cref{prop:affine:v:i:i}:
	Let $w\in \overline{D}\cap \overline{R}=\overline{(K-L)}\cap({K^\ominus+L^\ominus})$
	and 
	let $(k_n,l_n)_\nnn$ be a sequence in 
	$K\times L $
	such that $		k_n-l_n\to w$.
Set $(\forall \nnn)$ $w_n= \Pj{(K\cap L)^\ominus}(k_n-l_n)$.
Observe that because 
$(K\cap L)^\ominus=\overline{K^\ominus+L^\ominus }=K^\ominus+L^\ominus $
by, e.g., \cite[remarks~on~page~48]{Deutsch},
we have  
\begin{equation}
	\label{e:lim:cone:w}
	w_n\to \Pj{(K\cap L)^\ominus}w=w.
\end{equation}	
On the one hand, by construction  
$(w_n)_\nnn$ lies in $K^\ominus+L^\ominus $.
On the other hand we have $(\forall \nnn)$
$	 w_n=	\Pj{(K\cap L)^\ominus}(k_n-l_n)=k_n-l_n-\Pj{K\cap L}(k_n-l_n)
=k_n-(l_n+\Pj{K\cap L}(k_n-l_n))\in K-L.$
Hence,
 $(w_n)_\nnn$ lies in $(K-L)\cap(K^\ominus+L^\ominus )$.
Combining this with
\cref{e:lim:cone:w} we learn that 
$w\in  \overline{({K-L})\cap({K^\ominus+L^\ominus})}$.
	\cref{prop:affine:v:ii}:
	If $\dom C$ is bounded
	then \cite[Corollary~21.25]{BC2017} 
	implies that $\ran C=X$, hence $R=X$. 
	Therefore, 
	$\overline{D\cap R}=\overline{D}=\overline{D}\cap X
	=\overline{D}\cap\overline{R}$.
	The case when $\ran C$ is bounded follows 
	similarly by applying the previous argument to $C^{-1}$.	
	\cref{prop:affine:v:iii}:	
	If $\dom C=X$, then $D=X$. 
	Therefore, 
	$\overline{D\cap R}=\overline{R}=X\cap\overline{R}
	=\overline{D}\cap\overline{R}$.
	The case when $\ran C=X$  follows 
	similarly. 
\end{proof}

Before we proceed we recall the following facts.
\begin{fact}[Simons]
	\label{fact:domdom}
	Suppose that $\intr D\neq \fady$.
	Then $\overline{D}=\overline{\intr D}$.
\end{fact}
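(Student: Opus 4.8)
The plan is to prove $\overline{D}=\overline{\inte D}$ by establishing the nontrivial inclusion $\overline{D}\subseteq\overline{\inte D}$ (the reverse being immediate from $\inte D\subseteq D$), peeling off by soft convex analysis everything except one hard inclusion that encodes the maximal monotonicity of $A$ and $B$. Set $E:=\overline{\dom A}-\overline{\dom B}$. Since the closures $\overline{\dom A}$ and $\overline{\dom B}$ are convex (see the discussion around \cref{eq:con:cl}), $E$ is convex; moreover $D\subseteq E$, and approximating elements of $\overline{\dom A}$, $\overline{\dom B}$ by elements of $\dom A$, $\dom B$ gives $E\subseteq\overline{D}$, so $\overline{E}=\overline{D}$. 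By hypothesis $\emp\neq\inte D\subseteq\inte E\subseteq\inte\overline{D}$, so both convex sets $E$ and $\overline{D}$ have nonempty interior, and using that, for a convex set $C$ with $\inte C\neq\emp$, one has $\overline{C}=\overline{\inte C}$ and $\inte C=\inte\overline{C}$, we obtain $\overline{D}=\overline{\inte\overline{D}}$ and $\inte\overline{D}=\inte\overline{E}=\inte E=\inte\bigl(\overline{\dom A}-\overline{\dom B}\bigr)$.

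Consequently it suffices to prove the single inclusion
\[
\inte\bigl(\overline{\dom A}-\overline{\dom B}\bigr)\subseteq\dom A-\dom B.
\]
Indeed, being open and contained in $D=\dom A-\dom B$, the left-hand side is then contained in $\inte D$, hence $\inte\overline{D}\subseteq\inte D$, and therefore $\overline{D}=\overline{\inte\overline{D}}\subseteq\overline{\inte D}\subseteq\overline{D}$, giving equality throughout.

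The heart of the matter — and, I expect, the main obstacle — is exactly this inclusion: every interior point of the \emph{convexified} difference $\overline{\dom A}-\overline{\dom B}$ must be shown to be a genuine difference of a point of $\dom A$ and a point of $\dom B$. This cannot follow from convexity of $\overline{\dom A}$, $\overline{\dom B}$ alone, since there exist sets $E_0$ with $\overline{E_0}$ convex and $\inte E_0\neq\emp$ for which $\overline{E_0}\neq\overline{\inte E_0}$ (for instance an open ball together with a dense subset of a concentric annulus); so genuine structural input on $A$ and $B$ is unavoidable, and this is precisely the part supplied by the cited work of Simons. To prove it directly one may translate $B$ (replacing $\gra B$ by $\{(b+u,b^*):(b,b^*)\in\gra B\}$ for a fixed $u\in\inte D$) so as to assume $0\in\inte(\dom A-\dom B)$, and then invoke the structure theory of maximally monotone operators available in Hilbert space — Rockafellar's local boundedness theorem and the Rockafellar--Attouch--Br\'ezis sum theorem — to upgrade an interior point of $\overline{\dom A}-\overline{\dom B}$ to an actual element of $\dom A-\dom B$; everything else in the argument is routine.
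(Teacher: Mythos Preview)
The paper records this as a Fact and simply cites Simons' \emph{Minimax and Monotonicity}, Theorems~22.1(c) and~22.2(a); there is no argument given beyond the attribution. Your reduction to the single inclusion $\inte(\overline{\dom A}-\overline{\dom B})\subseteq\dom A-\dom B$ is correct and nicely isolates exactly what is being quoted, and you rightly say that this inclusion is what Simons supplies --- so on the crux you and the paper agree.

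The closing sketch, however, does not stand on its own. Translating $B$ by some fixed $u\in\inte D$ arranges $0\in\inte(\dom A-\dom B)$, but that does not address an arbitrary $w\in\inte(\overline{\dom A}-\overline{\dom B})$: to conclude $w\in\dom A-\dom B$ via a sum theorem you must translate by $w$ itself, and then the hypothesis at hand is $0\in\inte\big(\overline{\dom A}-(w+\overline{\dom B})\big)$, not $0\in\inte\big(\dom A-(w+\dom B)\big)$. The classical Rockafellar constraint qualification is the latter; the versions of the sum theorem that accept the convexified/closed condition are, in Simons' own development, built on precisely the dom--dom lemma you are invoking. So the sketch either has a genuine gap or is circular, and your earlier sentence --- that this step ``is precisely the part supplied by the cited work of Simons'' --- is the accurate bottom line.
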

\begin{proof}
	See \cite[Theorem~22.1(c)~and~Theorem~22.2(a)]{Simons1}.	
\end{proof}	
\begin{fact}
	\label{fact:pazy}
	Let $S$ be a nonempty closed convex subset of $X$,
	let $w=P_{S}0$,
	and let $s\in S$.
	If $\norm{s}\le \norm{w}$ then $s=w$.
\end{fact}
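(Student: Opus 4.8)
The plan is to use the defining minimality property of the metric projection together with the convexity of $S$ and the parallelogram law. Recall that $w=P_S 0$ is the unique point of $S$ nearest to the origin; in particular $\norm{w}\le\norm{z}$ for every $z\in S$.

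First I would exploit convexity: since $s,w\in S$ and $S$ is convex, the midpoint $m:=\thalb(s+w)$ lies in $S$, so $\norm{w}\le\norm{m}$. Next I would expand $\norm{m}^2$ via the identity $\norm{\thalb(s+w)}^2=\thalb\norm{s}^2+\thalb\norm{w}^2-\tfrac14\norm{s-w}^2$. Combining this with the hypothesis $\norm{s}\le\norm{w}$ gives
\begin{equation*}
\norm{w}^2\le\norm{m}^2=\thalb\norm{s}^2+\thalb\norm{w}^2-\tfrac14\norm{s-w}^2\le\norm{w}^2-\tfrac14\norm{s-w}^2 ,
\end{equation*}
whence $\norm{s-w}^2\le 0$ and therefore $s=w$.

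There is essentially no obstacle here; the only point requiring (minimal) care is to invoke the correct property of $P_S 0$, namely that it is the \emph{global} minimizer of $\norm{\cdot}$ over all of $S$ (not merely a local one), which is immediate from the construction of the projection onto a nonempty closed convex set in a Hilbert space. Alternatively, one could argue via the obtuse-angle (variational inequality) characterization $\scal{w}{z-w}\ge 0$ for all $z\in S$: taking $z=s$ yields $\scal{w}{s}\ge\norm{w}^2$, and then Cauchy--Schwarz together with $\norm{s}\le\norm{w}$ forces equality throughout, again giving $s=w$. I would present the midpoint argument as the cleaner of the two and leave this second route as a remark.
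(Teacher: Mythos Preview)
Your proof is correct. The paper itself does not supply an argument here: it simply records the statement as a fact and cites \cite[Lemma~1]{Pazy1970}. Your midpoint/parallelogram argument is the standard self-contained proof (and the variational-inequality alternative you sketch is equally valid); either would serve perfectly well in place of the bare citation.
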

\begin{proof}
	This is \cite[Lemma~1]{Pazy1970}.
\end{proof}	

When the sets 
$D$ and $R$ are reasonably \emph{fat}; namely, 
when $\intr D\neq \fady$
and $\intr R\neq \fady$ we obtain another sufficient condition for 
the conclusion $\overline{D\cap R}=\overline{D}\cap \overline{R}$ as we see in 
\cref{prop:fat:sets} below.

\begin{proposition}
	\label{prop:fat:sets}
	Suppose that $A$ and $B$ are $3^*$ monotone.
	Suppose that		$\intr D\neq \fady $ and $\intr R\neq \fady$.
	Then $\overline{D\cap R}=\overline{D}\cap \overline{R}$.
\end{proposition}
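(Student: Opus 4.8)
The plan is to prove $\overline{D\cap R}=\overline D\cap\overline R$ by the two inclusions. The inclusion ``$\subseteq$'' is immediate, since $D\cap R\subseteq D$ and $D\cap R\subseteq R$, so their closures are contained in $\overline D\cap\overline R$. For ``$\supseteq$'' I would first invoke \cref{main:thm}\cref{main:thm:i}: because $A$ and $B$ are $3^*$ monotone, $\cran(\Id-T)=\overline{D\cap R}$. Combined with the trivial inclusion just noted, this reduces the whole proposition to establishing that $\overline D\cap\overline R\subseteq\overline{D\cap R}$.

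The crux, and the step I expect to be the main obstacle, is to show that
\[\intr D\cap\intr R\neq\fady.\]
The $3^*$-monotonicity hypothesis is indispensable at exactly this point: without it one can arrange $\overline D\cap\overline R\neq\fady$ while $D\cap R=\fady$ (so $\overline{D\cap R}=\fady$), and the identity collapses. To prove $\intr D\cap\intr R\neq\fady$ I would exploit the self-duality $T_{(A,B)}=T_{(A^{-1},B^{-\ovee})}$ of \cref{eq:sd}, which interchanges the roles of $D$ and $R$ (indeed $\dom A^{-1}-\dom B^{-\ovee}=\ran A+\ran B=R$ and $\ran A^{-1}+\ran B^{-\ovee}=\dom A-\dom B=D$) and, by \cref{e:3*:dual}, keeps $A^{-1}$ and $B^{-\ovee}$ $3^*$ monotone. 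The idea is then to run a Brezis--Haraux / Simons-type range argument (see \cite{Br-H}, \cite[Section~25]{BC2017} and \cite{Simons1}) on the pair $(A,B)$, whose domain-difference $D$ has nonempty interior, and on its self-dual mate $(A^{-1},B^{-\ovee})$, whose domain-difference $R$ has nonempty interior, obtaining ranges contained in $D$ and in $R$ that carry the prescribed interiors, and to combine this with $\ran(\Id-T)\subseteq D\cap R$ together with $\cran(\Id-T)=\overline{D\cap R}$ to pin down a point lying in both $\intr D$ and $\intr R$. Everything surrounding this step is soft.

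Granting $\intr D\cap\intr R\neq\fady$, the conclusion follows by routine convex geometry. By \cref{fact:domdom} applied to $D=\dom A-\dom B$ and, via the self-dual reformulation $R=\dom A^{-1}-\dom B^{-\ovee}$, to $R$, we obtain $\overline D=\overline{\intr D}$ and $\overline R=\overline{\intr R}$; moreover $\intr D$ and $\intr R$ are convex (Simons \cite{Simons1}; recall also that $\overline D$ and $\overline R$ are convex by \cref{eq:con:cl}). Thus $\intr D$ and $\intr R$ are open convex sets with nonempty intersection, so the closure of their intersection equals the intersection of their closures, whence
\[\overline D\cap\overline R=\overline{\intr D}\cap\overline{\intr R}=\overline{\intr D\cap\intr R}\subseteq\overline{D\cap R},\]
the last inclusion because $\intr D\cap\intr R\subseteq D\cap R$. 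Together with the trivial inclusion this gives $\overline{D\cap R}=\overline D\cap\overline R$, as claimed.
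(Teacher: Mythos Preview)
Your reduction to the claim $\intr D\cap\intr R\neq\fady$ is where the proposal stalls. You correctly flag this as the crux, but what follows is not a proof: you invoke ``a Brezis--Haraux / Simons-type range argument'' on $(A,B)$ and on its dual mate and then propose to ``combine this with $\ran(\Id-T)\subseteq D\cap R$ \ldots\ to pin down a point lying in both $\intr D$ and $\intr R$.'' Nothing in that sketch actually manufactures such a point. Brezis--Haraux applied to $A+B(\cdot-d)$ for a fixed $d\in\intr D$ gives $\intr R\subseteq\ran(A+B(\cdot-d))$, so for $r\in\intr R$ you obtain $(x,u)\in\gra A$ with $(x-d,r-u)\in\gra B$; but the resulting displacement $(\Id-T)(x+u)$ lands in $D\cap R$, not a priori in $\intr D\cap\intr R$. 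Running the dual version is symmetric and hits the same wall. Without a genuine proof of $\intr D\cap\intr R\neq\fady$ your convex-geometry finish (which, incidentally, makes the appeal to \cref{main:thm}\cref{main:thm:i} superfluous) never gets started.

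The paper sidesteps the question of whether $\intr D\cap\intr R$ is nonempty altogether. Given $w\in\overline D\cap\overline R=\overline{\intr D}\cap\overline{\intr R}$, it picks \emph{separate} sequences $d_n\in\intr D$ and $r_n\in\intr R$, each converging to $w$, with no requirement that $d_n=r_n$. For each $n$, Brezis--Haraux (exactly as above, with $d=d_n$, $r=r_n$) supplies $(x_n,u_n)\in\gra A$ with $(x_n-d_n,r_n-u_n)\in\gra B$. Setting $z_n:=\J{A}(x_n+u_n)-\J{B}\R{A}(x_n+u_n)\in\ran(\Id-T)$, the firm nonexpansiveness of $\J{B^{-1}}$ yields the key estimate
\[
\norm{z_n-r_n}^2+\norm{z_n-d_n}^2\le\norm{d_n-r_n}^2,
\]
which forces $z_n\to w$ because $d_n,r_n\to w$. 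Hence $w\in\cran(\Id-T)\subseteq\overline{D\cap R}$ directly. The moral: rather than trying to upgrade a point of $D\cap R$ to a point of $\intr D\cap\intr R$, the paper squeezes an approximating sequence in $\ran(\Id-T)$ between two sequences approaching $w$ from the $D$-side and the $R$-side.
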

\begin{proof}
	It follows from \cref{fact:domdom} applied to
	$(A, B)$ (respectively $(A^{-1},B^{-\ovee})$)
	that $\overline{D}=\overline{\intr D} $
	(respectively $\overline{R}=\overline{\intr R} $).
	Let $w\in \overline{D}\cap \overline{R}$.
	By \cref{fact:domdom}
	$\overline{D}\cap \overline{R}=\overline{\intr D}\cap \overline{\intr R}$,
	 hence $w\in \overline{\intr D}\cap \overline{\intr R}$	and therefore
	\begin{equation}
		\label{eq:seq:shift}
		\text{there exists a sequence $(d_n,r_n)_\nnn$ in ${\intr D}\times{\intr R}$ such that $(d_n,r_n )\to (w,w)$}.
	\end{equation}	
	Observe that \cref{eq:seq:shift} implies that $(\forall \nnn)$
	$0\in \intr(\dom A-(d_n+\dom B))=\intr(\dom A-\dom B(\cdot-d_n))$. Therefore, by e.g., \cite[Corollary~25.5(iii)]{BC2017}
	we learn that  $(\forall \nnn)$ $A+B(\cdot-d_n)$ is maximally monotone.
	On the one hand, it follows from the $3^*$ monotonicity of
	$A$ and $B$ in view of the celebrated Brezis--Haraux theorem
	(see \cite[Th\'eor\`eme~3]{Br-H} and also \cite[Corollary~31.6]{Simons2})
	that $(\forall \nnn)$
	\begin{subequations}
		\begin{align}
			r_n&\in \intr (\ran A+\ran B)=\intr (\ran A+\ran B(\cdot -d_n))
			\\
			&=\intr \ran (A+B(\cdot -d_n))	.
		\end{align}		
	\end{subequations}	 
	Therefore,  there exist sequences $(x_n,u_n)_\nnn$ in $ \gra A$
	and $(x_n-d_n,-u_n+r_n )_\nnn$ in $ \gra B$.
	Using Minty's theorem \cref{thm:minty}
	we rewrite this as
	\begin{subequations}
		\label{e:seq:gra:AB}
		\begin{align}
			(x_n,u_n)&=(\J{A} (x_n+u_n),\J{A^{-1}}(x_n+u_n))		
			\label{e:seq:gra:AB:i}
			\\
			(x_n-d_n,-u_n+r_n)&=(\J{B}(x_n-u_n-d_n+r_n),\J{B^{-1}}(x_n-u_n-d_n+r_n)).
			\label{e:seq:gra:AB:ii}
		\end{align}		
	\end{subequations}	 
It follows from  \cref{e:seq:gra:AB:i} and \cref{e:seq:gra:AB:ii} that
	\begin{subequations}
		\begin{align}
			d_n&=\J{A} (x_n+u_n)-\J{B}(x_n-u_n-d_n+r_n)	
			\label{e:seq:gra:AB:iii}
			\\
			r_n&=\J{A^{-1}}(x_n+u_n)+\J{B^{-1}}(x_n-u_n-d_n+r_n)	.
			\label{e:seq:gra:AB:iv}
		\end{align}		
	\end{subequations}	 
	Now, set $(\forall \nnn)$
	\begin{equation}
		\label{eq:loc:zn}
		z_n= \J{A} (x_n+u_n)-\J{B}\R{A}(x_n+u_n)\in \ran (\Id-T).
	\end{equation}	
	We claim that $(\forall \nnn)$
	\begin{equation}
		\label{e:fne:B}
		\norm{z_n-r_n}^2+ \norm{z_n-d_n}^2\le 	\norm{d_n-r_n}^2.
	\end{equation}	
	Indeed, using \cref{eq:loc:zn}, \cref{e:seq:gra:AB:iv},
	the firm nonexpansiveness of $J_{B^{-1}}$,
	\cref{e:seq:gra:AB:i}, \cref{eq:loc:zn}
	and \cref{e:seq:gra:AB:iii} we obtain  
	\begin{subequations}
		\begin{align}
			\norm{z_n-r_n}^2
			&=\norm{\J{A} (x_n+u_n)-\J{B}\R{A}(x_n+u_n)-\J{A^{-1}}(x_n+u_n)-\J{B^{-1}}(x_n-u_n-d_n+r_n)}^2
			\\
			&=\norm{\J{B^{-1}}\R{A}(x_n+u_n)-\J{B^{-1}}(x_n-u_n-d_n+r_n)}^2
			\\
			&\le \norm{\R{A}(x_n+u_n)-(x_n-u_n-d_n+r_n)}^2
			\nonumber
			\\
			&\quad-\norm{\J{B}\R{A}(x_n+u_n)-\J{B}(x_n-u_n-d_n+r_n)}^2
			\\
			&= \norm{(x_n-u_n)-(x_n-u_n-d_n+r_n)}^2- \norm{(x_n-z_n)-(x_n-d_n)}^2
			\\
			&= \norm{d_n-r_n}^2- \norm{d_n-z_n}^2.
		\end{align}		
%		\label{e:zn:bdd}
	\end{subequations}
	This proves \cref{e:fne:B}. Taking the limit as $n\to \infty$
	in \cref{e:fne:B}
	in view of \cref{eq:seq:shift} we learn that $z_n\to w$.
	Therefore, in view of \cref{eq:loc:zn}, we learn that $w\in \cran(\Id-T)$.
	Hence, $\overline{D}\cap \overline{R}\subseteq  \cran(\Id-T)$.
	Now combine this with \cref{eq:inc:always}
	and recall that $\overline{D\cap R}\subseteq \overline{D}\cap \overline{R}$.
	The proof is complete.
\end{proof}

We now turn to the minimal norm vectors in the sets 
$\overline{D}$, $\overline{R}$ and $\cran(\Id-T)$;
namely,
$v_D$, $v_R$ and $v$ respectively.
Before we proceed we recall the following useful fact.

\begin{fact}
	\label{f:geo2sets}
	Let $U$ and $V$ be nonempty closed convex subsets of $X$.
	Then 
	\begin{equation}
		\Pj{\overline{U-V}}(0)\in\overline{(\Pj{U}-\Id)(V)}\cap\overline{(\Id-\Pj{V})(U)} 
		\subseteq 
		(-\rec U)^\ominus \cap(\rec V)^\ominus. 
	\end{equation}
\end{fact}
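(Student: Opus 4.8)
The plan is to prove the two memberships and the set inclusion separately, with a single elementary engine underlying the memberships: if $S$ is a nonempty closed convex subset of $X$ and $w=\Pj{S}0$, then the variational inequality characterizing the projection (see, e.g., \cite[Theorem~3.16]{BC2017}), applied at the point $0$, gives $\scal{0-w}{s-w}\le 0$, i.e. $\|s\|^2\ge\|w\|^2+\|s-w\|^2$ for every $s\in S$; in particular, \emph{any} sequence $(s_n)_\nnn$ in $S$ with $\|s_n\|\to\|w\|$ converges to $w$. I would apply this with $S=\overline{U-V}$, noting that $\|w\|=\|\Pj{\overline{U-V}}0\|$ is the distance of $0$ to $\overline{U-V}$, hence equals $\inf\{\|u-v\|:u\in U,\ v\in V\}$.

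First I would settle the memberships. Choose $u_n\in U$ and $v_n\in V$ with $\|u_n-v_n\|\to\|w\|$; since $u_n-v_n\in U-V\subseteq\overline{U-V}$, the engine above yields $u_n-v_n\to w$. Now $\Pj{U}(v_n)-v_n\in(\Pj{U}-\Id)(V)\subseteq U-V$, and since $u_n\in U$ is a feasible competitor in the definition of $\Pj{U}(v_n)$ we get $\|w\|\le\|\Pj{U}(v_n)-v_n\|\le\|u_n-v_n\|\to\|w\|$; hence $\Pj{U}(v_n)-v_n\to w$, i.e. $w\in\overline{(\Pj{U}-\Id)(V)}$. Symmetrically, $u_n-\Pj{V}(u_n)\in(\Id-\Pj{V})(U)\subseteq U-V$ and $\|w\|\le\|u_n-\Pj{V}(u_n)\|\le\|u_n-v_n\|\to\|w\|$, so $u_n-\Pj{V}(u_n)\to w$ and $w\in\overline{(\Id-\Pj{V})(U)}$.

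For the inclusion, fix $v\in V$ and set $q=\Pj{U}(v)$. Applying $\scal{v-q}{y-q}\le 0$ (valid for all $y\in U$) to $y=q+tr$ with $r\in\rec U$ and $t>0$ gives $\scal{q-v}{r}\ge 0$; thus $(\Pj{U}-\Id)(v)=q-v\in(-\rec U)^\ominus$, and since a polar cone is closed, $\overline{(\Pj{U}-\Id)(V)}\subseteq(-\rec U)^\ominus$. Symmetrically, for $u\in U$ and $p=\Pj{V}(u)$, feeding $y=p+tr$ with $r\in\rec V$, $t>0$, into $\scal{u-p}{y-p}\le 0$ yields $\scal{u-p}{r}\le 0$, so $(\Id-\Pj{V})(u)=u-p\in(\rec V)^\ominus$ and $\overline{(\Id-\Pj{V})(U)}\subseteq(\rec V)^\ominus$. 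Intersecting these two one-sided inclusions produces $\overline{(\Pj{U}-\Id)(V)}\cap\overline{(\Id-\Pj{V})(U)}\subseteq(-\rec U)^\ominus\cap(\rec V)^\ominus$, which is the second assertion (note that the two closures individually land in only one of the two polar cones, so passing to the intersection is essential to recover the stated symmetric inclusion).

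The only mildly delicate point is the first one: upgrading ``norms converge to the minimum'' to ``vectors converge'', which is exactly where convexity of $\overline{U-V}$ (hence, in the application of this fact, convexity of $\overline{D}$) is used. Everything else reduces to the projection variational inequality, once one observes that $\Pj{U}(v_n)-v_n$ and $u_n-\Pj{V}(u_n)$ are themselves legitimate members of $U-V$ whose norms are squeezed between $\|w\|$ and $\|u_n-v_n\|$, and that recession directions may be inserted into the projection inequalities.
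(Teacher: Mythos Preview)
Your proof is correct. The paper does not actually prove this fact; it merely cites \cite[Corollary~4.6]{BB94} for the first membership and \cite[Theorem~3.1]{Zara} for the polar-cone inclusion. What you have written is a self-contained derivation of both parts from first principles: the projection variational inequality and the elementary ``norms converge to the minimum $\Rightarrow$ vectors converge'' principle (which is precisely Pazy's lemma, stated in the paper as \cref{fact:Pazy:min:prop}). Your squeezing argument $\|w\|\le\|\Pj{U}(v_n)-v_n\|\le\|u_n-v_n\|\to\|w\|$ is essentially the core of the Bauschke--Borwein result being cited, and your recession-direction insertion $y=q+tr$ into the obtuse-angle inequality is the standard route to the Zarantonello-type inclusion $\ran(\Pj{U}-\Id)\subseteq(-\rec U)^\ominus$. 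So the mathematical content is the same as what the cited references contain; the difference is only that you have unpacked it rather than invoked it, which makes your version independent of the external literature at the cost of a few more lines.
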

\begin{proof}
	This follows from 
	\cite[Corollary~4.6]{BB94} and \cite[Theorem~3.1]{Zara}. 
\end{proof}

\begin{fact}
	\label{lem:rec:dom:ran}
	Let $A\colon X\rras X$ be maximally monotone.
	Then the following hold:
	\begin{enumerate}
		\item
		\label{lem:rec:dom:ran:i}
		$(\rec \cdom \ A)^\ominus\subseteq \rec (\cran   A)$.
		\item
		\label{lem:rec:dom:ran:ii}
		$(\rec {\cran}\ A)^\ominus\subseteq \rec (\cdom \ A)$.
	\end{enumerate}
\end{fact}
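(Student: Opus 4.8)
The plan is to reduce \cref{lem:rec:dom:ran:ii} to \cref{lem:rec:dom:ran:i} by passing to the inverse operator, and to prove \cref{lem:rec:dom:ran:i} via a Yosida-type regularization combined with weak sequential compactness. For \cref{lem:rec:dom:ran:ii}, I would use that $A^{-1}$ is maximally monotone with $\cdom A^{-1}=\cran A$ and $\cran A^{-1}=\cdom A$; applying \cref{lem:rec:dom:ran:i} to $A^{-1}$ then gives $(\rec\cran A)^\ominus=(\rec\cdom A^{-1})^\ominus\subseteq\rec\cran A^{-1}=\rec\cdom A$, which is \cref{lem:rec:dom:ran:ii}. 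Hence everything reduces to \cref{lem:rec:dom:ran:i}.

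For \cref{lem:rec:dom:ran:i}, recall first that $\cdom A$ and $\cran A$ are nonempty closed convex sets (in particular weakly closed), and that for a nonempty closed convex set $C$ and any $c_0\in C$ one has $\rec C=\bigcap_{\lambda\in\RPP}\lambda(C-c_0)$ (see, e.g., \cite{BC2017}). So, fixing $(a,a^*)\in\gra A$ and $u\in(\rec\cdom A)^\ominus$, it suffices to show that $a^*+tu\in\cran A$ for every $t\in\RP$, since this forces $u\in\rec\cran A$. Assume $t\in\RPP$ (the case $t=0$ is trivial). For $\eps\in\RPP$ put $x_\eps:=\J{\eps^{-1}A}\big(a+\eps^{-1}(a^*+tu)\big)$ and $v_\eps^*:=a^*+tu-\eps(x_\eps-a)$; since $\eps^{-1}A$ is maximally monotone, $x_\eps$ is well defined, and by construction $(x_\eps,v_\eps^*)\in\gra A$, so $v_\eps^*\in\ran A$. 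Applying monotonicity of $A$ to $(x_\eps,v_\eps^*)$ and $(a,a^*)$ and substituting $v_\eps^*-a^*=tu-\eps(x_\eps-a)$ gives the key estimate
\[
\eps\norm{x_\eps-a}^2\le t\,\scal{x_\eps-a}{u}.
\]
Because $v_\eps^*=a^*+tu-\eps(x_\eps-a)\in\ran A$, the conclusion $a^*+tu\in\cran A$ will follow once we establish $\eps\norm{x_\eps-a}\to 0$ as $\eps\to 0^+$.

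This limit is the crux of the proof and the main obstacle: in infinite dimensions one cannot use the finite-dimensional shortcut of bounding $\scal{x}{u}$ uniformly over $\dom A$ (that holds only for $u$ in the barrier cone, not for an arbitrary element of the polar of the recession cone). Instead I would argue by contradiction. Suppose $\eps_n\downarrow 0$ with $\eps_n\norm{x_{\eps_n}-a}\ge\delta>0$ for all $n$; then $\norm{x_{\eps_n}-a}\to\infty$, and dividing the key estimate by $\norm{x_{\eps_n}-a}$ shows that the unit vectors $d_n:=(x_{\eps_n}-a)/\norm{x_{\eps_n}-a}$ satisfy $\scal{d_n}{u}\ge\delta/t>0$. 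Passing to a subsequence with $d_n\weakly d$, observe that for each fixed $\lambda\in\RP$ and all large $n$ the point $a+\lambda d_n$ is a convex combination of $a$ and $x_{\eps_n}$, hence lies in $\cdom A$; since $\cdom A$ is weakly closed, $a+\lambda d\in\cdom A$ for every $\lambda\in\RP$, i.e.\ $d\in\rec\cdom A$. Then $u\in(\rec\cdom A)^\ominus$ yields $\scal{u}{d}\le 0$, whereas $d_n\weakly d$ forces $\scal{u}{d}\ge\delta/t>0$ — a contradiction. Thus $\eps\norm{x_\eps-a}\to 0$, which completes \cref{lem:rec:dom:ran:i}.

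The remaining points to fill in are routine and already available: that the inverse of a maximally monotone operator is maximally monotone; that maximally monotone operators have nonempty graphs and closed convex domain and range closures (so $(a,a^*)$ exists and $\cdom A,\cran A$ behave as used); that $\eps^{-1}A$ is maximally monotone so $\J{\eps^{-1}A}$ is everywhere defined and single-valued, making the construction of $x_\eps$ legitimate; and the recession-cone characterization recalled above, invoked both to reduce the task to showing $a^*+tu\in\cran A$ and, inside the contradiction argument, to certify that $d\in\rec\cdom A$.
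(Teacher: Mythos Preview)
Your argument is correct. The reduction of \cref{lem:rec:dom:ran:ii} to \cref{lem:rec:dom:ran:i} via $A^{-1}$ is clean, and the Yosida-type construction for \cref{lem:rec:dom:ran:i} works: the monotonicity inequality $\eps\norm{x_\eps-a}^2\le t\scal{x_\eps-a}{u}$ is derived correctly, and the contradiction step is sound because bounded sequences in a Hilbert space admit weakly convergent subsequences, $\cdom A$ is convex and weakly closed, and the recession-cone characterization you invoke does give $d\in\rec\cdom A$.

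As for comparison with the paper: the paper does not prove this statement at all --- it is recorded as a \emph{Fact} and simply cites \cite[Lemma~3.2]{BM21}. Your write-up therefore supplies what the paper outsources. Your route (resolvent approximation plus a weak-compactness contradiction to control $\eps\norm{x_\eps-a}$) is a natural self-contained argument and, notably, handles the genuine infinite-dimensional subtlety you flag: an element of $(\rec\cdom A)^\ominus$ need not lie in the barrier cone of $\cdom A$, so one cannot simply bound $\scal{x_\eps-a}{u}$ uniformly. The price is the extra machinery (subsequences, weak limits); the payoff is that the argument stays entirely inside the paper's Hilbert-space toolkit without appealing to an external reference.
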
	

\begin{proof}
	See \cite[Lemma~3.2]{BM21}
\end{proof}

\begin{lemma}
\label{lem:v:vDvR:abstract}	
Let $S_1$ and $S_2$ be nonempty closed convex subsets of 
$X$ and set $(\forall i\in \{1,2\})$ $v_i=P_{S_i}(0)$.
 Suppose that $\scal{v_1}{v_2}\le 0$ and that $v_1+v_2\in S_1\cap S_2$. 
 Then $v_1+v_2=P_{S_1\cap S_2}(0)$. 
\end{lemma}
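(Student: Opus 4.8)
The plan is to verify directly that the vector $p := v_1 + v_2$ satisfies the variational characterization of the projection of $0$ onto the closed convex set $S_1 \cap S_2$: namely, since $p \in S_1 \cap S_2$ by hypothesis, it suffices to show that $\scal{0 - p}{s - p} \le 0$ for every $s \in S_1 \cap S_2$, i.e., $\scal{p}{s - p} \ge 0$. I would expand this as $\scal{v_1 + v_2}{s - v_1 - v_2} \ge 0$ and try to split the left-hand side so that one piece is controlled by the characterization of $v_1 = P_{S_1}(0)$ and the other by that of $v_2 = P_{S_2}(0)$.

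The key step is the following decomposition. Fix $s \in S_1 \cap S_2$. Since $v_1 = P_{S_1}(0)$ and $s \in S_1$, the projection inequality gives $\scal{v_1}{s - v_1} \ge 0$; similarly $\scal{v_2}{s - v_2} \ge 0$ since $s \in S_2$. Adding these,
\begin{equation*}
\scal{v_1}{s - v_1} + \scal{v_2}{s - v_2} \ge 0.
\end{equation*}
On the other hand,
\begin{equation*}
\scal{v_1 + v_2}{s - v_1 - v_2} = \scal{v_1}{s - v_1} + \scal{v_2}{s - v_2} - \scal{v_1}{v_2} - \scal{v_2}{v_1} = \scal{v_1}{s - v_1} + \scal{v_2}{s - v_2} - 2\scal{v_1}{v_2}.
\end{equation*}
Since $\scal{v_1}{v_2} \le 0$ by hypothesis, the term $-2\scal{v_1}{v_2} \ge 0$, and combining with the displayed inequality above yields $\scal{v_1 + v_2}{s - v_1 - v_2} \ge 0$, which is exactly the projection characterization. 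Hence $p = v_1 + v_2 = P_{S_1 \cap S_2}(0)$.

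There is essentially no obstacle here; the only subtlety worth a line is confirming that $S_1 \cap S_2$ is nonempty (guaranteed because $v_1 + v_2 \in S_1 \cap S_2$ by hypothesis) and closed and convex (immediate, as an intersection of closed convex sets), so that $P_{S_1 \cap S_2}(0)$ is well defined and characterized by the variational inequality used above. Alternatively, one could invoke \cref{fact:pazy}: since $p \in S_1$ and $\norm{p}^2 = \norm{v_1}^2 + 2\scal{v_1}{v_2} + \norm{v_2}^2 \le \norm{v_1}^2 + \norm{v_2}^2$, one might hope to compare norms, but the cleanest route is the direct variational argument above, which requires no auxiliary lemma.
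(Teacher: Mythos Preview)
Your proof is correct and follows essentially the same approach as the paper: both verify the variational characterization of the projection by splitting $\scal{v_1+v_2}{v_1+v_2-s}$ so that the pieces are controlled by the individual projection inequalities for $v_1$ and $v_2$ together with the hypothesis $\scal{v_1}{v_2}\le 0$. The only difference is cosmetic---the paper bounds $\scal{v_1}{v_1+v_2-s}\le\scal{v_1}{v_1-s}\le 0$ and likewise for $v_2$, whereas you expand fully and collect a $-2\scal{v_1}{v_2}$ term---but the substance is identical.
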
	
\begin{proof}
Let $s\in S_1\cap S_2$.
In view of the projection theorem see, e.g., 
\cite[Theorem~3.16]{BC2017}
it suffices to show that  $(\forall s\in S_1\cap S_2)$ $\scal{v_1+v_2-0}{v_1+v_2-s}\le 0$.
Indeed, we have 
$
		\scal{v_1+v_2-0}{v_1+v_2-s}
=\scal{v_1}{v_1+v_2-s}+\scal{v_2}{v_1+v_2-s}
\le\scal{v_1}{v_1-s}+\scal{v_2}{v_2-s}
		\le 0+0=0.	
$
\end{proof}

Parts of the following proposition were proved in \cite{BM21}.
We reiterate the proof for the sake of completeness and to avoid
any confusion with the standing assumptions in \cite{BM21}.
\begin{proposition}
	\label{prop:v:decom}
	The following hold:
	\begin{enumerate}
		\item
		\label{prop:v:decom:i}
		$v_D\in (-\rec \cdom  A)^\ominus\cap (\rec \cdom  B)^\ominus
		=(-(\rec \cdom  A)^\ominus)\cap (\rec \cdom  B)^\ominus$.
			\item
		\label{prop:v:decom:iii}
		$v_R\in  (-\rec \cran   A)^\ominus\cap (-\rec \cran   B)^\ominus
		=-((\rec \cran   A)^\ominus\cap (\rec \cran   B)^\ominus)$.	
		\item
		\label{prop:v:decom:ii}
		$v_D\in (-\rec \cran   A)\cap (\rec \cran   B)$.
		\item
		\label{prop:v:decom:iv}
		$v_R\in(-\rec \cdom  A)\cap (-\rec \cdom  B)
		=-(\rec \cdom  A\cap\rec \cdom  B)$.
		\item
		\label{prop:v:decom:v}
		$\scal{v_D}{v_R}=0$.
		\item
		\label{prop:v:decom:v:v}
		$v_D+v_R\in \overline{D}\cap\overline{R}$.
		\item
		\label{prop:v:decom:v:vi}
		$v_D+v_R=\Pj{\overline{D}\cap\overline{R}}(0)$.
	\end{enumerate}		
\end{proposition}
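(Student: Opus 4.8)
The plan is to dispatch the seven assertions in the order listed: reduce \cref{prop:v:decom:i} and \cref{prop:v:decom:iii} to \cref{f:geo2sets}, deduce \cref{prop:v:decom:ii} and \cref{prop:v:decom:iv} from these via \cref{lem:rec:dom:ran}, and then assemble \cref{prop:v:decom:v}, \cref{prop:v:decom:v:v} and \cref{prop:v:decom:v:vi} from the first four together with \cref{lem:v:vDvR:abstract}. Throughout I would invoke \cref{eq:con:cl} (so that $\cdom A,\cdom B,\cran A,\cran B$ are nonempty closed convex, hence admissible in \cref{f:geo2sets}), the evident identities $\overline{D}=\overline{\cdom A-\cdom B}$ and $\overline{R}=\overline{\cran A+\cran B}$, and the elementary facts $(-K)^\ominus=-(K^\ominus)$ and $\rec(-S)=-\rec S$ for a cone $K$ and a set $S$.

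First I would prove \cref{prop:v:decom:i} by applying \cref{f:geo2sets} with $(U,V)=(\cdom A,\cdom B)$: since $\overline{U-V}=\overline{D}$, this says precisely that $v_D\in(-\rec\cdom A)^\ominus\cap(\rec\cdom B)^\ominus$, and the stated reformulation is just the identity $(-K)^\ominus=-(K^\ominus)$. For \cref{prop:v:decom:iii} I would instead take $(U,V)=(\cran A,-\cran B)$ in \cref{f:geo2sets}, noting $\overline{U-V}=\overline{\cran A+\cran B}=\overline{R}$ and $\rec(-\cran B)=-\rec\cran B$. Then \cref{prop:v:decom:ii} comes from feeding \cref{prop:v:decom:i} into \cref{lem:rec:dom:ran}\cref{lem:rec:dom:ran:i} (applied to $B$ and to $A$): $v_D\in(\rec\cdom B)^\ominus\subseteq\rec\cran B$ and $v_D\in(-\rec\cdom A)^\ominus=-(\rec\cdom A)^\ominus\subseteq-\rec\cran A$; and \cref{prop:v:decom:iv} follows symmetrically from \cref{prop:v:decom:iii} and \cref{lem:rec:dom:ran}\cref{lem:rec:dom:ran:ii}.

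For \cref{prop:v:decom:v} I would combine \cref{prop:v:decom:i} with \cref{prop:v:decom:iv}: on the one hand $v_D\in(\rec\cdom B)^\ominus$ and $-v_R\in\rec\cdom B$ force $\scal{v_D}{v_R}\ge0$, while on the other $v_D\in(-\rec\cdom A)^\ominus$ and $v_R\in-\rec\cdom A$ force $\scal{v_D}{v_R}\le0$. The one step carrying genuine content is \cref{prop:v:decom:v:v}, for which I would show $v_R\in\rec\overline{D}$ and $v_D\in\rec\overline{R}$. For the first, using $\rec\cdom A-\rec\cdom B\subseteq\rec(\cdom A-\cdom B)\subseteq\rec\overline{D}$ together with the decomposition $v_R=0-(-v_R)$, where $0\in\rec\cdom A$ and $-v_R\in\rec\cdom B$ by \cref{prop:v:decom:iv}, one obtains $v_R\in\rec\overline{D}$, whence $v_D+v_R\in\overline{D}+\rec\overline{D}\subseteq\overline{D}$. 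Symmetrically, $\rec\cran A+\rec\cran B\subseteq\rec(\cran A+\cran B)\subseteq\rec\overline{R}$ together with $v_D=0+v_D$, where $0\in\rec\cran A$ and $v_D\in\rec\cran B$ by \cref{prop:v:decom:ii}, gives $v_D\in\rec\overline{R}$, whence $v_D+v_R\in\overline{R}$; hence $v_D+v_R\in\overline{D}\cap\overline{R}$.

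Finally, \cref{prop:v:decom:v:vi} is immediate from \cref{lem:v:vDvR:abstract} with $(S_1,S_2)=(\overline{D},\overline{R})$: the hypothesis $\scal{v_D}{v_R}\le0$ holds by \cref{prop:v:decom:v}, and $v_D+v_R\in\overline{D}\cap\overline{R}$ by \cref{prop:v:decom:v:v}. I expect the only real obstacle to be \cref{prop:v:decom:v:v}: the remaining parts are bookkeeping with \cref{f:geo2sets}, \cref{lem:rec:dom:ran} and \cref{lem:v:vDvR:abstract}, whereas \cref{prop:v:decom:v:v} requires spotting that $v_R$ (resp.\ $v_D$) should be exhibited as a recession direction of $\overline{D}$ (resp.\ $\overline{R}$), plus the small recession-cone calculus above---most notably the slightly counterintuitive move of splitting off the zero vector from one of the two constituent sets in each decomposition.
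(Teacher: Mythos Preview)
Your proposal is correct and matches the paper's proof almost step for step: both apply \cref{f:geo2sets} with $(U,V)=(\cdom A,\cdom B)$ and $(U,V)=(\cran A,-\cran B)$ for \cref{prop:v:decom:i} and \cref{prop:v:decom:iii}, feed these into \cref{lem:rec:dom:ran} for \cref{prop:v:decom:ii} and \cref{prop:v:decom:iv}, and finish \cref{prop:v:decom:v:vi} via \cref{lem:v:vDvR:abstract}. The only cosmetic differences are that for \cref{prop:v:decom:v} the paper pairs \cref{prop:v:decom:i}+\cref{prop:v:decom:iv} and \cref{prop:v:decom:ii}+\cref{prop:v:decom:iii} (one pairing per inequality) whereas you extract both inequalities from \cref{prop:v:decom:i}+\cref{prop:v:decom:iv} alone, and for \cref{prop:v:decom:v:v} the paper shifts $\cdom B$ by the recession direction $-v_R$ directly rather than passing through the inclusion $\rec\cdom A-\rec\cdom B\subseteq\rec\overline{D}$---your phrasing is arguably cleaner, but the content is identical.
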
	

\begin{proof}
	\cref{prop:v:decom:i}\&\cref{prop:v:decom:iii}:
	Apply \cref{f:geo2sets}
	with $(U,V)$ replaced by
	$(\cdom  A,\cdom  B)$
	(respectively $(\cran   A,-\cran   B)$).
	
	\cref{prop:v:decom:ii}\&\cref{prop:v:decom:iv}:
	Combine \cref{prop:v:decom:i} (respectively \cref{prop:v:decom:iii})
	and \cref{lem:rec:dom:ran}\cref{lem:rec:dom:ran:i}
	(respectively \cref{lem:rec:dom:ran}\cref{lem:rec:dom:ran:ii}).
		
	\cref{prop:v:decom:v}:
	It follows from \cref{prop:v:decom:i} and \cref{prop:v:decom:iv}
	that $(-v_D,-v_R)\in (\rec\cdom  A)^\ominus\times \rec\cdom  A$.
	Hence $\scal{v_D}{v_R}=\scal{-v_D}{-v_R}\le 0$.
	Similarly,  \cref{prop:v:decom:ii} and \cref{prop:v:decom:iii}
	imply that 
	$(v_D,-v_R)\in \rec\cran   B\times(\rec\cran   B)^\ominus$.
	Hence, $-\scal{v_D}{v_R}=\scal{v_D}{-v_R}\le 0$.
	Altogether, $\scal{v_D}{v_R}=0$.
		\cref{prop:v:decom:v:v}:
	Indeed, in view of \cref{prop:v:decom:iv}
	we have $-v_R\in \rec \cdom  B$.
	Therefore,
	${v_D}+{v_R}
	\in \overline{\dom A-\dom B}+{v_R}
	=\overline{\dom A-(-v_R+\cdom B)}
	\subseteq  \overline{\dom A-\cdom  B}
	=\overline{\dom A-\dom B}$.
	Similarly, in view of \cref{prop:v:decom:ii}
	we have $v_D\in \rec \cran   B$.
	Therefore 
	${v_D}+{v_R}
	\in\overline{\ran A+v_D+\cran B}
	\subseteq \overline{\ran A+\cran   B}
	=
	\overline{\ran A+{\ran} B}
	$.
	\cref{prop:v:decom:v:vi}:
	Combine \cref{prop:v:decom:v}, \cref{prop:v:decom:v:v} and \cref{lem:v:vDvR:abstract}
	applied with $(S_1,S_2,v_1,v_2)$ replaced by
	$(\overline{D},\overline{R},v_D,v_R)$.
 \end{proof}	

Before we proceed
 we recall the following
 useful fact.
 \begin{fact}
\label{fact:Pazy:min:prop}
Let $C$ be a nonempty closed convex subset of $X$
and let $w=P_C(0)$.
Suppose that $(u_n)_\nnn$ is a sequence in $C$
such that $\norm{u_n}\to \norm{w}$.
Then $u_n\to w$. 	
 \end{fact}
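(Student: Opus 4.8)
The plan is to prove this via the parallelogram identity in a Hilbert space, exploiting the fact that $w = P_C(0)$ is characterized by $\langle w, w - c\rangle \le 0$ for all $c \in C$ (the projection theorem, \cite[Theorem~3.16]{BC2017}). The point is that in a Hilbert space, norm convergence to the projection of the origin is essentially forced by the variational inequality once the norms agree in the limit. I would first recall that for $w = P_C(0)$ and any $u \in C$ one has $\|u\|^2 = \|w\|^2 + 2\langle w, u-w\rangle + \|u-w\|^2 \ge \|w\|^2 + \|u - w\|^2$, since $\langle w, u - w\rangle \ge 0$ by the projection inequality (note $\langle w - 0, w - u\rangle \le 0$ rearranges to $\langle w, u - w\rangle \ge 0$). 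Applying this with $u = u_n$ gives
\begin{equation}
\|u_n - w\|^2 \le \|u_n\|^2 - \|w\|^2.
\end{equation}

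Next I would take the limit. Since $\|u_n\| \to \|w\|$ by hypothesis, the right-hand side tends to $\|w\|^2 - \|w\|^2 = 0$, and therefore $\|u_n - w\|^2 \to 0$, i.e.\ $u_n \to w$ in norm. That completes the argument.

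I do not expect any genuine obstacle here: the only subtlety is getting the sign of the projection inequality right (it is $\langle 0 - w, c - w\rangle \le 0$ for all $c \in C$, equivalently $\langle w, c - w\rangle \ge 0$), and being sure that $w \in C$ so that the inequality applies to $u = u_n \in C$ as well. Since $C$ is nonempty closed convex, $w = P_C(0)$ is well defined and lies in $C$, so everything goes through. Alternatively, one could phrase the whole thing through \cref{fact:pazy}: the bound above shows that along a subsequence where $u_{n_k} \rightharpoonup \bar u$ weakly (bounded since $\|u_n\|$ converges), $\|\bar u\| \le \liminf \|u_{n_k}\| = \|w\|$ and $\bar u \in C$ (weak closedness of convex closed sets), so $\bar u = w$ by \cref{fact:pazy}; combined with $\|u_{n_k}\| \to \|w\| = \|\bar u\|$ this upgrades weak to strong convergence, and a standard subsequence argument gives $u_n \to w$. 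But the direct parallelogram computation is cleaner and I would present that.
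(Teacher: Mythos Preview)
Your proof is correct. The key inequality $\|u_n - w\|^2 \le \|u_n\|^2 - \|w\|^2$, obtained from the variational characterization $\langle w, c - w\rangle \ge 0$ for all $c\in C$, immediately forces strong convergence once $\|u_n\|\to\|w\|$; the signs in the projection inequality are handled correctly.

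As for comparison with the paper: the paper does not actually prove this fact but simply cites \cite[Lemma~2]{Pazy1970}. Your direct argument via the expansion $\|u_n\|^2 = \|w\|^2 + 2\langle w, u_n - w\rangle + \|u_n - w\|^2$ is precisely the standard proof (and is essentially Pazy's original argument), so you are supplying what the paper defers to a reference. The alternative route you sketch through weak sequential compactness and \cref{fact:pazy} also works, but as you note the direct computation is shorter and avoids any appeal to subsequences.
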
	
 \begin{proof}
 	See  \cite[Lemma~2]{Pazy1970}.
 \end{proof}	  

\begin{lemma}
	\label{lem:nec:suff:v:vDvR}
	Suppose that $\cran(\Id-T)=\overline{D\cap R }$
	(see \cref{main:thm} for sufficient conditions).
Then we  have 
\begin{enumerate}
\item	
	\label{lem:nec:suff:v:vDvR:i}	
$v=v_D+v_R \siff v_D+v_R\in \overline{D\cap R} $.
\item
	\label{lem:nec:suff:v:vDvR:ii}
Suppose $ \overline{D\cap R} = \overline{D} \cap \overline{R} $.
Then $v=v_D+v_R$.
\end{enumerate}	
\end{lemma}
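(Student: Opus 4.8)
The plan is to exploit the already-established facts about $v_D+v_R$ together with the projection-theorem characterization of nearest-point maps. By \cref{prop:v:decom}\cref{prop:v:decom:v:vi} we already know $v_D+v_R=\Pj{\overline{D}\cap\overline{R}}(0)$, and by hypothesis $\cran(\Id-T)=\overline{D\cap R}$. Since $\overline{D\cap R}\subseteq\overline{D}\cap\overline{R}$ always holds, the point $v_D+v_R$, being the element of smallest norm in the larger set $\overline{D}\cap\overline{R}$, will automatically be the element of smallest norm in the smaller set $\overline{D\cap R}$ \emph{provided} it actually belongs to $\overline{D\cap R}$. This is precisely the content of \cref{lem:nec:suff:v:vDvR:i}, and \cref{fact:pazy} is tailor-made for the argument.

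First I would prove \cref{lem:nec:suff:v:vDvR:i}. For the forward implication: if $v=v_D+v_R$, then since $v=\Pj{\cran(\Id-T)}(0)=\Pj{\overline{D\cap R}}(0)\in\overline{D\cap R}$, we immediately get $v_D+v_R\in\overline{D\cap R}$. For the reverse implication, suppose $v_D+v_R\in\overline{D\cap R}$. Recall $v=\Pj{\overline{D\cap R}}(0)$, so $\|v\|\le\|v_D+v_R\|$ because $v_D+v_R$ is a competitor in $\overline{D\cap R}$. On the other hand, $v\in\overline{D\cap R}\subseteq\overline{D}\cap\overline{R}$, so $v$ is a competitor in $\overline{D}\cap\overline{R}$, whose minimal-norm element is $v_D+v_R$; hence $\|v_D+v_R\|\le\|v\|$. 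Thus $\|v_D+v_R\|=\|v\|$, and since both lie in $\overline{D\cap R}$ with $v$ its nearest point to $0$, \cref{fact:pazy} (applied with $S=\overline{D\cap R}$, $w=v$, $s=v_D+v_R$) forces $v_D+v_R=v$.

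Then \cref{lem:nec:suff:v:vDvR:ii} is essentially immediate: under the extra hypothesis $\overline{D\cap R}=\overline{D}\cap\overline{R}$, we have $v_D+v_R=\Pj{\overline{D}\cap\overline{R}}(0)=\Pj{\overline{D\cap R}}(0)=v$ directly from \cref{prop:v:decom}\cref{prop:v:decom:v:vi} and the standing hypothesis $\cran(\Id-T)=\overline{D\cap R}$; alternatively one just notes that the hypothesis of \cref{lem:nec:suff:v:vDvR:ii} gives $v_D+v_R\in\overline{D}\cap\overline{R}=\overline{D\cap R}$, so the condition on the right-hand side of \cref{lem:nec:suff:v:vDvR:i} holds and part \cref{lem:nec:suff:v:vDvR:i} applies. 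I would present it via this second route to reuse part \cref{lem:nec:suff:v:vDvR:i} cleanly.

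There is no real obstacle here; the work has all been done in \cref{prop:v:decom} and the supporting facts. The only point requiring a little care is making sure the chain of norm inequalities is set up against the correct ambient set each time (competitor in $\overline{D\cap R}$ versus competitor in $\overline{D}\cap\overline{R}$), and invoking \cref{fact:pazy} with the right set so that equality of norms upgrades to equality of points.
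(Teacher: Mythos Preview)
Your proof is correct and follows the same overall architecture as the paper's: for \cref{lem:nec:suff:v:vDvR:i} you establish the two norm inequalities $\|v\|\le\|v_D+v_R\|$ and $\|v_D+v_R\|\le\|v\|$ and then upgrade to equality of points via Pazy's lemma, and for \cref{lem:nec:suff:v:vDvR:ii} you feed \cref{prop:v:decom}\cref{prop:v:decom:v:vi} back into \cref{lem:nec:suff:v:vDvR:i}. The one genuine difference is in how you obtain $\|v_D+v_R\|\le\|v\|$. You invoke \cref{prop:v:decom}\cref{prop:v:decom:v:vi} directly: since $v_D+v_R=\Pj{\overline{D}\cap\overline{R}}(0)$ and $v\in\overline{D\cap R}\subseteq\overline{D}\cap\overline{R}$, the inequality is immediate. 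The paper instead reproves this inequality in place, using the obtuse-angle characterization $\|v_D\|^2\le\scal{v_D}{\cdot}$ on $\overline{D}$ (and similarly for $v_R$ on $\overline{R}$), the orthogonality $\scal{v_D}{v_R}=0$, and Cauchy--Schwarz. Your route is cleaner and makes better use of what has already been established; the paper's route is self-contained at the cost of some redundancy with the proof of \cref{prop:v:decom}\cref{prop:v:decom:v:vi}.
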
	

\begin{proof}
\cref{lem:nec:suff:v:vDvR:i}:
	``$\RA$": This is clear in view of \cref{e:def:vD:vR}.
	``$\LA$":
		Observe that 
\cref{e:assump:free:fd} 
	implies that
	$\norm{v}\le\norm{v_D+v_R}$.
	%	Hence if $v_D+v_R=0$ the $v=0$
	%	and the conclusion follows.
	%	Now suppose that $v_D+v_R\neq 0$
	It follows from \cref{prop:v:decom}\cref{prop:v:decom:v},
	the definition of $v$ and $v_D$
	that
	$\norm{v_D}^2\le \scal{v_D}{\overline{D}}$,
	hence 
	$\norm{v_D}^2\le \scal{v_D}{\overline{D}\cap \overline{R}}$.
	Similarly,
	$\norm{v_R}^2\le \scal{v_R}{\overline{D}\cap \overline{R}}$.	 
	Therefore using 
	Cauchy--Schwarz and \cref{prop:v:decom}\cref{prop:v:decom:v}
	we learn that
	$\norm{v_D+v_R}^2
	=\norm{v_D}^2+\norm{v_R}^2
	\le \scal{v}{v_D}+\scal{v}{v_R}
	=\scal{v}{v_D+v_R}\le \norm{v}\norm{v_D+v_R}$.
	Hence, $\norm{v_D+v_R}\le \norm{v}$.
	Altogether, $\norm{v}=\norm{v_D+v_R}$.
	In view of \cref{e:assump:free:fd}
	and \cref{fact:Pazy:min:prop}, 
	we learn that
	$v=v_D+v_R$.
\cref{lem:nec:suff:v:vDvR:ii}:	
	Combine \cref{lem:nec:suff:v:vDvR:i}
	and \cref{prop:v:decom}\cref{prop:v:decom:v:vi}.
\end{proof}

\begin{proposition}
	\label{prop:sc:nc}
Suppose that $U$ and $V$ are nonempty closed convex subsets of $X$.
Set $(A,B)=(N_U,N_V)$.
Then the following hold:
\begin{enumerate}
\item
\label{prop:sc:nc:i}
$v_R=0$.
\item
\label{prop:sc:nc:ii}
$v_D=v$.
\item	
\label{prop:sc:nc:iii}
$v=v_D+v_R$.	
\end{enumerate}	
	
\end{proposition}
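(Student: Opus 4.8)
The plan is to prove the three items in order, with all the substance concentrated in \cref{prop:sc:nc:ii}; the key point there is that the minimal-norm vector $v_D$ of $\overline{D}$ already lies in $\cran(\Id-T)$, after which \cref{prop:sc:nc:i} and \cref{prop:sc:nc:iii} come essentially for free.

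First I would record the easy structure of $(A,B)=(N_U,N_V)$. Since $U$ and $V$ are nonempty closed convex, $N_U$ and $N_V$ are maximally monotone; and since $U\neq\fady$ we may pick $u_0\in U$, so that $0\in N_U(u_0)\subseteq\ran A$, and symmetrically $0\in\ran B$. Hence $0\in\ran A+\ran B=R\subseteq\overline{R}$, which is exactly \cref{prop:sc:nc:i}: $v_R=\Pj{\overline{R}}(0)=0$.

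The heart of the argument is the inclusion $v_D\in\cran(\Id-T)$. By \cref{f:geo2sets} applied to the pair $(U,V)$ we have $v_D=\Pj{\overline{D}}(0)=\Pj{\overline{U-V}}(0)\in\overline{(\Id-\Pj{V})(U)}$, so it is enough to show $(\Id-\Pj{V})(U)\subseteq\ran(\Id-T)$. Fix $u\in U$ and set $a=u$, $a^*=0$, $b=\Pj{V}u$, $b^*=u-\Pj{V}u$. Then $(a,a^*)\in\gra A$ because $0\in N_U(u)$, $(b,b^*)\in\gra B$ by the variational characterization of the projection onto $V$ (which gives $u-\Pj{V}u\in N_V(\Pj{V}u)$), and $a-b=u-\Pj{V}u=a^*+b^*$; by the description of $\ran(\Id-T)$ recalled in the Introduction this forces $u-\Pj{V}u\in\ran(\Id-T)$. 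Passing to closures, $v_D\in\overline{(\Id-\Pj{V})(U)}\subseteq\cran(\Id-T)$.

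Given this, \cref{prop:sc:nc:ii} follows from a minimality comparison: since $v_D\in\cran(\Id-T)$ and $v=\Pj{\cran(\Id-T)}(0)$ we get $\norm{v}\le\norm{v_D}$, while $\cran(\Id-T)\subseteq\overline{D\cap R}\subseteq\overline{D}$ (by \cref{eq:inc:always}) puts $v$ in $\overline{D}$, so $\norm{v_D}\le\norm{v}$ by minimality of $v_D$ over $\overline{D}$; hence $\norm{v_D}=\norm{v}$, and \cref{fact:pazy} yields $v_D=v$. Finally \cref{prop:sc:nc:iii} is immediate: $v=v_D=v_D+0=v_D+v_R$ using \cref{prop:sc:nc:i}. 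The one real obstacle is establishing $v_D\in\cran(\Id-T)$: one cannot simply verify $v_D\in\overline{D}$ and $v_D\in\overline{R}$ separately, since in infinite dimensions $\overline{D\cap R}$ --- let alone $\cran(\Id-T)$ --- can be strictly smaller than $\overline{D}\cap\overline{R}$; the explicit points $u-\Pj{V}u$ are precisely what furnish an approximating sequence lying in $\ran(\Id-T)$ itself.
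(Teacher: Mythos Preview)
Your proof is correct. The paper's own argument is much terser: for \cref{prop:sc:nc:i} it invokes Zarantonello's identification $\cran N_U=(\rec U)^\ominus$ to note $0\in\overline{R}$, and for \cref{prop:sc:nc:ii} it simply cites an external reference (\cite[Proposition~3.5]{Sicon}), with \cref{prop:sc:nc:iii} following by combination. Your treatment of \cref{prop:sc:nc:i} is more elementary --- the observation $0\in N_U(u_0)$ already gives $0\in R$ without appealing to the recession-cone description --- and your proof of \cref{prop:sc:nc:ii} is a genuine, self-contained alternative: you use \cref{f:geo2sets} to place $v_D$ in $\overline{(\Id-\Pj{V})(U)}$ and then verify directly, via the graph description of $\ran(\Id-T)$, that every point $u-\Pj{V}u$ with $u\in U$ lies in $\ran(\Id-T)$; the minimality comparison with \cref{fact:pazy} then closes the argument. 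This has the advantage of making transparent \emph{why} $v_D$ lands in $\cran(\Id-T)$ (the projection residuals $u-\Pj{V}u$ supply explicit witnesses), whereas the paper outsources this step. The paper's route is shorter on the page but relies on results proved elsewhere; yours is longer but stays entirely within the tools already assembled in the present paper.
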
		
\begin{proof}
\cref{prop:sc:nc:i}:
If follows from \cite[Theorem~3.1]{Zara} that $(\overline{\ran }A,\overline{\ran} B)=((\rec U)^\ominus,(\rec V)^\ominus)$.
Therefore, $0\in(\rec U)^\ominus+(\rec V)^\ominus\subseteq \overline{R} $. Hence, $v_R=0$.

\cref{prop:sc:nc:ii}:
This is \cite[Proposition~3.5]{Sicon}.

\cref{prop:sc:nc:iii}:
Combine 	\cref{prop:sc:nc:i} and \cref{prop:sc:nc:ii}.
\end{proof}

\begin{corollary}
	\label{prop:cond:for:v}
	Suppose that $\cran(\Id-T)=\overline{D\cap R }$
	(see \cref{main:thm} for sufficient conditions).
	Suppose additionally  that one of the following holds:
	\begin{enumerate}
		\item
		\label{prop:cond:for:v:i}
		$\overline{D\cap R}=\overline{D}\cap \overline{R}$.
		\item
		\label{prop:cond:for:v:ii}
		$X$ is finite-dimensional.
		\item
		\label{prop:cond:for:v:iii}
		$(\exists C\in \{A,B\})$ such that $\dom C$ and $\ran C$ are affine.
		\item
		\label{prop:cond:for:iii:vi}
		$(\exists C\in \{A,B\})$  such that %$C$ is $3^*$ monotone and 
		$\dom C=X$ or $\ran C=X$.
		\item
		\label{prop:cond:for:v:vi}
		$(\exists C\in \{A,B\})$ $\dom C$ is bounded  or $\ran C$ is bounded.
		\item
		\label{prop:cond:for:v:v}
		$A $ and $B$ are $3^*$ monotone,
		$\intr D\neq \fady $ and $\intr R\neq \fady$.
		\item
		\label{prop:cond:for:vii}
		$(A,B)=(N_U,N_V)$, $U$ and $V$ 
		are nonempty closed subsets of $X$.
	\end{enumerate}	
Then $v=v_D+v_R$.
\end{corollary}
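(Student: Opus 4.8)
The plan is to show that each of the hypotheses \cref{prop:cond:for:v:ii}--\cref{prop:cond:for:v:v} forces the identity $\overline{D\cap R}=\overline{D}\cap\overline{R}$, i.e.\ reduces to \cref{prop:cond:for:v:i}, and then to invoke \cref{lem:nec:suff:v:vDvR}\cref{lem:nec:suff:v:vDvR:ii}; the normal-cone case \cref{prop:cond:for:vii} will be argued separately, directly from \cref{prop:sc:nc}. Observe that the standing assumption $\cran(\Id-T)=\overline{D\cap R}$ of the corollary is precisely the hypothesis under which \cref{lem:nec:suff:v:vDvR} was established, so once $\overline{D\cap R}=\overline{D}\cap\overline{R}$ is known, \cref{lem:nec:suff:v:vDvR}\cref{lem:nec:suff:v:vDvR:ii} yields $v=v_D+v_R$ at once.

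Concretely, I would proceed case by case. Condition \cref{prop:cond:for:v:i} is literally the hypothesis of \cref{lem:nec:suff:v:vDvR}\cref{lem:nec:suff:v:vDvR:ii}, so there is nothing to do. For \cref{prop:cond:for:v:ii} ($X$ finite-dimensional) I would quote the observation recorded in \cref{rem:intersection}\cref{rem:intersection:i} that $\overline{D\cap R}=\overline{D}\cap\overline{R}$. For \cref{prop:cond:for:v:iii} (some $C\in\{A,B\}$ has affine domain and range) I would apply \cref{prop:affine:v}\cref{prop:affine:v:i}; for \cref{prop:cond:for:iii:vi} ($\dom C=X$ or $\ran C=X$) I would apply \cref{prop:affine:v}\cref{prop:affine:v:iii}; for \cref{prop:cond:for:v:vi} ($\dom C$ bounded or $\ran C$ bounded) I would apply \cref{prop:affine:v}\cref{prop:affine:v:ii}; and for \cref{prop:cond:for:v:v} ($A$ and $B$ are $3^*$ monotone with $\intr D\neq\fady$ and $\intr R\neq\fady$) I would apply \cref{prop:fat:sets}. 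In each of these cases the conclusion $\overline{D\cap R}=\overline{D}\cap\overline{R}$ holds, and then \cref{lem:nec:suff:v:vDvR}\cref{lem:nec:suff:v:vDvR:ii} finishes the argument.

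It remains to treat \cref{prop:cond:for:vii}, the case $(A,B)=(N_U,N_V)$. Here I would \emph{not} try to route through the identity $\overline{D\cap R}=\overline{D}\cap\overline{R}$ --- there is no reason it should hold in general, since $\ran N_U$ and $\ran N_V$ need not be affine --- but instead invoke \cref{prop:sc:nc}\cref{prop:sc:nc:iii}, which already delivers $v=v_D+v_R$ for normal cone operators (there $v_R=0$ because $0\in\overline{R}$, and $v_D=v$ by \cref{prop:sc:nc}\cref{prop:sc:nc:ii}). Since the corollary is essentially a compilation of the preceding results, I do not anticipate any genuine obstacle; the substantive work lives in \cref{prop:affine:v}, \cref{prop:fat:sets} and \cref{prop:sc:nc}, and the only points that require care at this stage are recognizing that \cref{prop:cond:for:vii} cannot be folded into \cref{prop:cond:for:v:i} and must be handled on its own, and checking that the standing hypothesis $\cran(\Id-T)=\overline{D\cap R}$ needed by \cref{lem:nec:suff:v:vDvR} is in force in cases \cref{prop:cond:for:v:i}--\cref{prop:cond:for:v:v}, which it is by assumption.
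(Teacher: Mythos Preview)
Your proposal is correct and matches the paper's own proof essentially line for line: cases \cref{prop:cond:for:v:i}--\cref{prop:cond:for:v:v} are reduced to the identity $\overline{D\cap R}=\overline{D}\cap\overline{R}$ via \cref{rem:intersection}\cref{rem:intersection:i}, \cref{prop:affine:v}, and \cref{prop:fat:sets}, after which \cref{lem:nec:suff:v:vDvR}\cref{lem:nec:suff:v:vDvR:ii} applies, while \cref{prop:cond:for:vii} is handled directly by \cref{prop:sc:nc}\cref{prop:sc:nc:iii}. Your additional remark that \cref{prop:cond:for:vii} cannot in general be routed through \cref{prop:cond:for:v:i} is a nice observation that the paper leaves implicit.
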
	

\begin{proof}
	\cref{prop:cond:for:v:i}:
	This is \cref{lem:nec:suff:v:vDvR}\cref{lem:nec:suff:v:vDvR:ii}.  
	\cref{prop:cond:for:v:ii}:
	This follows from combining \cref{rem:intersection}\cref{rem:intersection:i}
	and \cref{prop:cond:for:v:i}.	
	\cref{prop:cond:for:v:iii}--\cref{prop:cond:for:v:vi}:
	Combine \cref{prop:affine:v}\cref{prop:affine:v:i}--\cref{prop:affine:v:iii}
	 and \cref{lem:nec:suff:v:vDvR} \cref{lem:nec:suff:v:vDvR:ii}.	
%	\cref{prop:cond:for:v:vi}:	
%
%	
	\cref{prop:cond:for:v:v}:
	This is \cref{prop:fat:sets}.	
		\cref{prop:cond:for:vii}:
	This is \cref{prop:sc:nc}\cref{prop:sc:nc:iii}
\end{proof}

\begin{remark}
The minimal displacement vector in
	$\cran(\Id-T) $ can be found via (see
	\cite{BBR78}, \cite{Br-Reich77} and \cite{Pazy1970})
	\begin{equation}
		\label{e:limit:v}
	(\forall x\in X)\qquad v=-\lim_{n\to \infty}\frac{T^n x}{n}
	=\lim_{n\to \infty}T^n x-T^{n+1}x.
	\end{equation}	
\end{remark}

Working in $X\times X$
 and recalling \cref{e:def:vD:vR}
we observe that
(see \cite[Proposition~29.4]{BC2017})
\begin{equation}
	\label{e:pds:pr}
	(v_R,v_D)=P_{\overline{R}\times \overline{D}}(0).
\end{equation}

\begin{proposition}[computing $v_D$ and $v_R$]
\label{prop:comp:vdvr}	
Suppose that $v=v_D+v_R$ (see \cref{prop:cond:for:v} for sufficient
conditions). Let $x\in X$.
Then the following hold:
\begin{enumerate}
\item
\label{prop:comp:vdvr:i}	
$v_R=-\lim_{n\to \infty}\frac{\J{A} T^n x}{n}
=\lim_{n\to \infty}(\J{A} T^n x-\J{A} T^{n+1}x)$.
\item
\label{prop:comp:vdvr:ii}	
$v_D=-\lim_{n\to \infty}\frac{\J{A^{-1}}T^n x}{n}
=\lim_{n\to \infty}(\J{A^{-1}}T^n x-\J{A^{-1}}T^{n+1}x)$.	
\end{enumerate}		
\end{proposition}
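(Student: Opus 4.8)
The plan is to prove \cref{prop:comp:vdvr:i} and then obtain \cref{prop:comp:vdvr:ii} from it by self-duality. Indeed, by \cref{eq:sd} the Douglas--Rachford operator of the Attouch--Th\'era dual pair $(A^{-1},B^{-\ovee})$ is again $T$; for this pair, $\J{A^{-1}}=\Id-\J{A}$ by \cref{e:iri} and $\ran A^{-1}+\ran B^{-\ovee}=\dom A-\dom B=D$, so the r\^ole of $v_R$ is taken over by $v_D$, while $\Id-T$ --- hence $v$ --- is unchanged and the hypothesis $v=v_D+v_R$ is symmetric under the duality. Thus \cref{prop:comp:vdvr:ii} is exactly \cref{prop:comp:vdvr:i} applied to $(A^{-1},B^{-\ovee})$, and it remains to prove \cref{prop:comp:vdvr:i}.

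To this end, set $y_n:=T^n x$, $p_n:=\J{A}y_n$ and $s_n:=\J{A^{-1}}y_n=y_n-p_n$ (the last equality by \cref{e:iri}); by Minty's theorem (\cref{thm:minty}), $(p_n,s_n)\in\gra A$, so $\scal{p_m-p_n}{s_m-s_n}\ge0$. By the Remark preceding the proposition (see \cref{e:limit:v}), $w_n:=y_n-y_{n+1}\to v$ and $y_n/n\to-v$, and since $T$ is nonexpansive, $\norm{w_n}$ is nonincreasing, so $\norm{w_n}\to\norm{v}$. Writing $T=\tfrac12(\Id+\R{B}\R{A})$ gives $\R{B}\R{A}y_n=2y_{n+1}-y_n$, hence $\R{B}\R{A}y_n-\R{B}\R{A}y_{n+1}=2w_{n+1}-w_n\to v$. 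Expanding $\R{A}=2\J{A}-\Id$ and using the firm nonexpansiveness of $\J{A}$ one gets $\norm{\R{A}y_n-\R{A}y_{n+1}}^2=\norm{w_n}^2-4\scal{p_n-p_{n+1}}{s_n-s_{n+1}}$, while nonexpansiveness of $\R{B}$ gives $\norm{\R{B}\R{A}y_n-\R{B}\R{A}y_{n+1}}^2\le\norm{\R{A}y_n-\R{A}y_{n+1}}^2$. Since $\norm{w_n}^2$ and $\norm{2w_{n+1}-w_n}^2$ both tend to $\norm{v}^2$, we deduce $\scal{p_n-p_{n+1}}{s_n-s_{n+1}}\to0$; as $w_n=(p_n-p_{n+1})+(s_n-s_{n+1})$, this yields
\[
\norm{p_n-p_{n+1}}^2+\norm{s_n-s_{n+1}}^2\longrightarrow\norm{v}^2=\norm{v_D}^2+\norm{v_R}^2,
\]
the last equality by \cref{prop:v:decom} (which gives $\scal{v_D}{v_R}=0$) and the hypothesis.

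It remains to prove $p_n-p_{n+1}\to v_R$ and $s_n-s_{n+1}\to v_D$; the $1/n$-assertions then follow by Ces\`aro, since $p_n/n=p_0/n-\tfrac1n\sum_{k=0}^{n-1}(p_k-p_{k+1})\to-v_R$ and likewise $s_n/n\to-v_D$. By nonexpansiveness of $\J{A}$, $\norm{p_n/n-\J{A}(-nv)/n}\le\norm{y_n/n+v}\to0$, so $p_n/n$ and the recession resolvent $\J{A}(-nv)/n$ share a limit; I would identify it as $-v_R$ via the ``asymptotic Moreau decomposition'' $-v=(-v_R)+(-v_D)$, in which $-v_R\in\rec\cdom A$, $-v_D\in\rec\cran A$ (\cref{prop:v:decom}) and $\scal{v_R}{v_D}=0$, and symmetrically $s_n/n\to-v_D$. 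To pass to the differences: $(p_n-p_{n+1})_\nnn$ is bounded; let $\ell$ be a weak cluster point, so that (by $w_n\to v$) $v-\ell$ is the corresponding cluster point of $(s_n-s_{n+1})_\nnn$. Weak lower semicontinuity gives $\norm{\ell}\le\norm{v_R}$ and $\norm{v-\ell}\le\norm{v_D}$, and, using $\scal{p_n-p_{n+1}}{s_n-s_{n+1}}\to0$ together with $w_n\to v$ strongly, $\lim_k\norm{p_{n_k}-p_{n_k+1}}^2=\scal{\ell}{v}\ge\norm{\ell}^2$ along that subsequence; combining these with $\norm{v}^2=\norm{v_R}^2+\norm{v_D}^2$ and the alignment of $\ell$ with $v_R$ furnished by the recession-cone relations forces $\ell=v_R$. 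Hence $p_n-p_{n+1}\weak v_R$ and, by the displayed limit together with $s_n-s_{n+1}\weak v_D$, $\norm{p_n-p_{n+1}}\to\norm{v_R}$; then $p_n-p_{n+1}\to v_R$ strongly by \cref{fact:Pazy:min:prop}, and $s_n-s_{n+1}=w_n-(p_n-p_{n+1})\to v_D$, completing the proof of \cref{prop:comp:vdvr:i}.

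The heart of the matter --- and the only genuinely delicate point --- is the identification of the recession limit $\lim_n\J{A}(-nv)/n$ (equivalently of the weak cluster point $\ell$) with $-v_R$. This cannot be read off from $v_R=P_{\overline R}(0)$, since the shadow differences $p_n-p_{n+1}$ lie in $\dom A-\dom A$ rather than in $\overline R$; it instead requires the polar- and recession-cone structure of $v_D$ and $v_R$ recorded in \cref{prop:v:decom} --- precisely, that $-v=(-v_R)+(-v_D)$ is a Moreau-type splitting with $-v_R\in\rec\cdom A$, $-v_D\in\rec\cran A$ and $\scal{v_R}{v_D}=0$ --- which is available only because of the standing hypothesis $v=v_D+v_R$. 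The remaining ingredients (the soft estimate $\scal{p_n-p_{n+1}}{s_n-s_{n+1}}\to0$ and the upgrade from weak and norm convergence to strong convergence) are routine given the facts already established.
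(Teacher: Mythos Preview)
Your self-duality reduction of \cref{prop:comp:vdvr:ii} to \cref{prop:comp:vdvr:i} is correct and is a nice shortcut the paper does not take. The soft estimate $\scal{p_n-p_{n+1}}{s_n-s_{n+1}}\to 0$ and the resulting
\[
\norm{p_n-p_{n+1}}^2+\norm{s_n-s_{n+1}}^2\to \norm{v}^2=\norm{v_R}^2+\norm{v_D}^2
\]
are also fine; this is just a repackaging of the firm nonexpansiveness inequality the paper uses.

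The genuine gap is the identification step, and the culprit is your explicit claim that ``the shadow differences $p_n-p_{n+1}$ lie in $\dom A-\dom A$ rather than in $\overline{R}$''. In fact they \emph{do} lie in $R$: since $T=\J{A}-\J{B^{-1}}\R{A}$ and $\J{A}T+\J{A^{-1}}T=T$, one has
\[
\J{A}-\J{A}T=\J{A^{-1}}T+\J{B^{-1}}\R{A}
\quad\text{and}\quad
\J{A^{-1}}-\J{A^{-1}}T=\J{A}T-\J{B}\R{A},
\]
so $p_n-p_{n+1}\in \ran A+\ran B\subseteq\overline{R}$ and $s_n-s_{n+1}\in \dom A-\dom B\subseteq\overline{D}$. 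With this in hand the paper's argument is immediate: the pair $(p_n-p_{n+1},s_n-s_{n+1})$ lies in $\overline{R}\times\overline{D}$, its norm tends to $\norm{(v_R,v_D)}$, and $(v_R,v_D)=P_{\overline{R}\times\overline{D}}(0)$ by \cref{e:pds:pr}, so \cref{fact:Pazy:min:prop} in $X\times X$ gives $(p_n-p_{n+1},s_n-s_{n+1})\to(v_R,v_D)$. The $1/n$-versions follow the same way (or by your Ces\`aro step).

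Without that inclusion, your identification does not go through. The ``asymptotic Moreau decomposition'' $-v=(-v_R)+(-v_D)$ with $-v_R\in\rec\cdom A$ and $-v_D\in\rec\cran A$ is \emph{not} a Moreau decomposition in general, because \cref{lem:rec:dom:ran} only gives $(\rec\cdom A)^\ominus\subseteq\rec\cran A$, not equality; consequently there is no reason for $\J{A}(-nv)/n\to -v_R$. Likewise, the inequalities $\norm{\ell}\le\norm{v_R}$ and $\norm{v-\ell}\le\norm{v_D}$ do not follow from weak lower semicontinuity alone (you have no a priori bound $\limsup\norm{p_n-p_{n+1}}\le\norm{v_R}$), and the phrase ``alignment of $\ell$ with $v_R$ furnished by the recession-cone relations'' is not a proof. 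The missing ingredient is precisely the algebraic identity above, which places the sequences in the correct convex sets and lets Pazy's lemma do the work.
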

\begin{proof}
First we verify that
\begin{equation}
\label{eq:ran:inc:sub}
\ran (\J{A} -\J{A} T)\subseteq\overline{R}	
\text{  ~~and~~  }\ran (\J{A^{-1}}-\J{A^{-1}}T)\subseteq\overline{D}.
\end{equation}	
Indeed, observe that 
$\J{A} T+\J{A^{-1}}T=T=\Id-\J{A} +\J{B}\R{A}=\J{A} -\R{A}+\J{B}\R{A}=\J{A} +\J{B^{-1}}\R{A}$.
Hence,
$\J{A} -\J{A} T=\J{A^{-1}}T+\J{B^{-1}}\R{A}$.
Consequently, $\ran (\J{A} -\J{A} T)\subseteq \ran A+\ran B\subseteq\overline{R}$.
Similarly we show that
$\ran (\J{A^{-1}}-\J{A^{-1}}T)\subseteq\overline{D}$.
This verifies \cref{eq:ran:inc:sub}.
Now let $x\in X$, let $ n\ge1$ and observe that \cref{eq:ran:inc:sub} 
and the convexity of $\overline{D}$
and $\overline{R}$
(see \cref{eq:con:cl}) imply
\begin{equation}
	\label{e:ran:split:set}
\{(\J{A}T^n x-\J{A} T^{n+1} x,\J{A^{-1}}T^nx-\J{A^{-1}}T^{n+1}  x ),
\tfrac{1}{n}(\J{A} x-\J{A} T^n x,\J{A^{-1}}x-\J{A^{-1}}T^n x )	
\}\subseteq \overline{R}\times \overline{D}.
\end{equation}	
Now, 
\cref{prop:v:decom}\cref{prop:v:decom:v},
\cref{e:pds:pr},
\cref{eq:ran:inc:sub},
\cref{e:ran:split:set},
the firm nonexpansiveness of $\J{A}$
and \cref{e:limit:v} yield
\begin{subequations}
	\label{se:1}
\begin{align}
\norm{v_R+v_D}^2
&=\norm{v_R}^2+\norm{v_D}^2
=\norm{(v_R,v_D)}^2\le \norm{\tfrac{1}{n}(\J{A} x-\J{A} T^n x,\J{A^{-1}}x-\J{A^{-1}}T^n x )}^2
\\
&= \tfrac{1}{n^2}\norm{\J{A} x-\J{A} T^n x}^2+\tfrac{1}{n^2}\norm{\J{A^{-1}}x-\J{A^{-1}}T^n x }^2
\le \tfrac{1}{n^2}\norm{(x-T^n x)}^2
\\
&=\norm{\tfrac{1}{n}(x-T^n x)}^2\to \norm{v}^2=\norm{v_R+v_D}^2,
\end{align} 
\end{subequations}
 and 
 \begin{subequations}
 		\label{se:2}
 	\begin{align}
 		\norm{v_R+v_D}^2
 		&=\norm{v_R}^2+\norm{v_D}^2
 		=\norm{(v_R,v_D)}^2\le \norm{(\J{A} T^nx-\J{A} T^{n+1} x,\J{A^{-1}}T^nx-\J{A^{-1}}T^{n+1}  x )}^2
 		\\
 		&= \norm{(\J{A} T^nx-\J{A} T^{n+1}  x)}^2+\norm{(\J{A^{-1}}T^nx-\J{A^{-1}}T^{n+1}  x )}^2
 		\\
 		&\le \norm{T^nx-T^{n+1}  x}^2\to \norm{v}^2=\norm{v_R+v_D}^2.
 	\end{align} 
 \end{subequations}
 Therefore we learn  from \cref{se:1} and \cref{se:2} respectively that
  \begin{subequations}
 	\label{se:3}
 	\begin{align}
 	\tfrac{1}{n}\norm{(\J{A} x-\J{A} T^n x,\J{A^{-1}}x-\J{A^{-1}}T^n x )}	&\to \norm{(v_R,v_D)} 	
 	 \label{se:3:i}	
 	\\
 	\norm{(\J{A}T^n x-\J{A} T^{n+1} x,\J{A^{-1}}T^nx-\J{A^{-1}}T^{n+1}  x )}	&\to \norm{(v_R,v_D)}. 	
\label{se:3:ii}	 
\end{align} 
 \end{subequations}
Now  combine
\cref{se:3:i} (respectively \cref{se:3:ii}), 
\cref{eq:ran:inc:sub},
\cref{fact:Pazy:min:prop}
 applied with $X$ replaced by $X\times X$,
$C$ replaced by $\overline{R} \times \overline{D}$
and $w$ replaced by $(v_R,v_D)$
in view of \cref{e:pds:pr} to verify the first (respectively second) identity in 
\cref{prop:comp:vdvr:i} and in \cref{prop:comp:vdvr:ii}.
\end{proof}

We conclude this section with an application 
of our results 
which employ Pierra's technique 
to 
the product space. 

\begin{proposition}
\label{prop:prod:sp}
Suppose that $m\in \stb{2,3,\ldots}$.
For every $i\in\stb{1,2,\ldots,m}$,
suppose that 
$A_i\colon X\rras X$ 
is maximally monotone and $3^*$
monotone.
Set ${\bf\Delta}:=
\stb{(x,\ldots,x)\in X^m~|~x\in X}$,
set ${\bf A}={\ds \times_{i=1}^m} A_i$,
set ${\bf B}=N_{{\bf\Delta}}$,
and set ${\bf T}=T_{({\bf A}, {\bf B})}$.
%let $j\colon X\to X^m\colon x\mapsto(x,x,\ldots,x)$,
%%and 
%%let $e:X^m\to X:(x_1,x_2,\ldots,x_m)
%%\mapsto\tfrac{1}{m}\bk{\sum_{i=1}^m x_i}$.  
%Let ${\bf x}\in X^m$ and suppose that 
%${\bf v}:=P_{{\cran }(\Id-{\bf T})}0\in \ran (\Id-{\bf T})$.
Then the following hold:
\begin{enumerate}
\item
\label{prop:prod:sp:i}
${\bf\Delta}^\perp
	=\menge{(u_1,\ldots,u_m)\in X^m}{\sum_{i=1}^{m}u_i=0}$.
\item
\label{prop:prod:sp:ii}
$
\cran(\Id-{\bf T})=\overline{{ \times_{i=1}^m\dom A_i-\bf\Delta}} \cap \overline{{ \times_{i=1}^m\ran A_i+\bf\Delta}^\perp}.
$
\item
\label{prop:prod:sp:iii}
$
\cran {\bf T}
=\overline{ \times_{i=1}^m\dom A_i-{\bf\Delta}} \cap \overline{\times_{i=1}^m\ran A_i+{\bf\Delta}^\perp}.
$
\end{enumerate}
\end{proposition}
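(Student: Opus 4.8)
The plan is to derive \cref{prop:prod:sp} directly from the main results, applied to the pair $({\bf A},{\bf B})$ in the product Hilbert space $X^m$ (with inner product $\scal{{\bf x}}{{\bf y}}=\sum_{i=1}^m\scal{x_i}{y_i}$), once the hypotheses of \cref{main:thm} and \cref{main:T:thmm} have been checked and the closure-of-intersection identity of \cref{prop:affine:v} has been invoked. First, \cref{prop:prod:sp:i} is a one-line computation: for $(u_1,\dots,u_m)\in X^m$ one has $(u_1,\dots,u_m)\in{\bf\Delta}^\perp$ iff $\scal{\sum_{i=1}^m u_i}{x}=\sum_{i=1}^m\scal{u_i}{x}=0$ for every $x\in X$, i.e.\ iff $\sum_{i=1}^m u_i=0$. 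Along the way I would record the facts used repeatedly: ${\bf\Delta}$ is a nonempty closed linear subspace of $X^m$, so ${\bf B}=N_{\bf\Delta}=\partial\iota_{\bf\Delta}$ is maximally monotone with $\dom{\bf B}={\bf\Delta}$ and, because ${\bf\Delta}$ is a subspace, $\ran{\bf B}={\bf\Delta}^\perp$; in particular $\dom{\bf B}$ and $\ran{\bf B}$ are affine, and so (for ${\bf B}^{-1}=N_{{\bf\Delta}^\perp}$) are $\dom{\bf B}^{-1}={\bf\Delta}^\perp$ and $\ran{\bf B}^{-1}={\bf\Delta}$.

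Next I would verify the hypotheses needed to invoke the theorems. That ${\bf B}$ is maximally monotone and $3^*$ monotone is immediate, the latter because ${\bf B}$ is a subdifferential. For ${\bf A}=\times_{i=1}^m A_i$, maximal monotonicity is the standard fact that a finite product of maximally monotone operators is maximally monotone, while $3^*$ monotonicity follows by splitting over coordinates: $\gra{\bf A}=\times_{i=1}^m\gra A_i$, $\dom{\bf A}=\times_{i=1}^m\dom A_i$, $\ran{\bf A}=\times_{i=1}^m\ran A_i$, and for $({\bf y},{\bf z}^*)\in\dom{\bf A}\times\ran{\bf A}$ one gets $\inf_{({\bf x},{\bf x}^*)\in\gra{\bf A}}\scal{{\bf x}-{\bf y}}{{\bf x}^*-{\bf z}^*}=\sum_{i=1}^m\inf_{(x_i,x_i^*)\in\gra A_i}\scal{x_i-y_i}{x_i^*-z_i^*}>-\infty$, each summand finite by the $3^*$ monotonicity of $A_i$.

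With ${\bf A}$ and ${\bf B}$ maximally monotone and $3^*$ monotone, \cref{main:thm}\cref{main:thm:i} applied to $({\bf A},{\bf B})$ gives $\cran(\Id-{\bf T})=\overline{D\cap R}$ with $D=\dom{\bf A}-\dom{\bf B}=\times_{i=1}^m\dom A_i-{\bf\Delta}$ and $R=\ran{\bf A}+\ran{\bf B}=\times_{i=1}^m\ran A_i+{\bf\Delta}^\perp$; since $\dom{\bf B}$ and $\ran{\bf B}$ are affine, \cref{prop:affine:v}\cref{prop:affine:v:i} (with $C={\bf B}$) upgrades this to $\overline{D\cap R}=\overline D\cap\overline R$, which is \cref{prop:prod:sp:ii}. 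Part \cref{prop:prod:sp:iii} is the parallel statement for $\cran{\bf T}$: write ${\bf T}=\Id-T_{({\bf A},{\bf B}^{-1})}$ (as in the proof of \cref{main:T:thmm}) with ${\bf B}^{-1}=N_{{\bf\Delta}^\perp}$, apply \cref{main:thm}\cref{main:thm:i} to $({\bf A},{\bf B}^{-1})$ to read off $\cran{\bf T}=\overline{\widetilde D\cap\widetilde R}$ with $\widetilde D=\dom{\bf A}-\ran{\bf B}=\times_{i=1}^m\dom A_i-{\bf\Delta}^\perp$ and $\widetilde R=\ran{\bf A}+\dom{\bf B}=\times_{i=1}^m\ran A_i+{\bf\Delta}$, and distribute the closure once more by \cref{prop:affine:v}\cref{prop:affine:v:i} applied to $({\bf A},{\bf B}^{-1})$, now using the affineness of $\dom{\bf B}^{-1}$ and $\ran{\bf B}^{-1}$. (One may equivalently quote \cref{main:T:thmm}\cref{main:T:thmm:i} directly.)

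The proof is essentially bookkeeping on top of the machinery already developed, so there is no deep obstacle; the step carrying actual content---and the one to handle carefully---is the passage from $\overline{D\cap R}$ to $\overline D\cap\overline R$ (and its analogue for $\cran{\bf T}$). This is precisely where the special structure ${\bf B}=N_{\bf\Delta}$ is exploited: \cref{prop:affine:v}\cref{prop:affine:v:i} applies because the normal cone of a closed subspace, and its inverse, have affine domains and affine ranges. A secondary, routine point worth stating explicitly is the stability of $3^*$ monotonicity under finite products, which is what allows ${\bf A}$ to inherit the hypothesis from the factors $A_i$.
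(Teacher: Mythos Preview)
Your proof is correct and follows the same route as the paper's: verify that ${\bf A}=\times_{i=1}^m A_i$ inherits $3^*$ monotonicity from the $A_i$ and then apply \cref{main:thm}\cref{main:thm:i} and \cref{main:T:thmm}\cref{main:T:thmm:i} to $({\bf A},{\bf B})$ in $X^m$. Two remarks are worth recording. First, you are more careful than the paper on one point: you explicitly invoke \cref{prop:affine:v}\cref{prop:affine:v:i} (using that $\dom{\bf B}={\bf\Delta}$ and $\ran{\bf B}={\bf\Delta}^\perp$ are affine) to upgrade $\overline{D\cap R}$ to $\overline D\cap\overline R$, a step the paper's proof does not spell out even though the statement is written with separate closures. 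Second, the formula you obtain for \cref{prop:prod:sp:iii}, namely $\overline{\times_{i=1}^m\dom A_i-{\bf\Delta}^\perp}\cap\overline{\times_{i=1}^m\ran A_i+{\bf\Delta}}$, is exactly what \cref{main:T:thmm}\cref{main:T:thmm:i} delivers; the paper's printed \cref{prop:prod:sp:iii} is verbatim identical to \cref{prop:prod:sp:ii} and is evidently a typographical slip.
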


\begin{proof}
It is straightforward to verify that
$\bf A$ is $3^*$ monotone, that
$\dom {\bf A}=\times_{i=1}^m\dom A_i$
 and that 
 $\ran {\bf A}= \times_{i=1}^m\ran A_i$	
\cref{prop:prod:sp:i}: This is
\cite[Proposition~26.4(i)]{BC2017}.
\cref{prop:prod:sp:ii}\&\cref{prop:prod:sp:iii}:
Apply \cref{main:thm}\cref{main:thm:i}
(respectively  \cref{main:T:thmm}\cref{main:T:thmm:i})
with $(A,B)$
replaced by $({\bf A,B})$.
\end{proof}	
\section*{Acknowledgements}
\small
The research of WMM was partially supported by Discovery Grants
of the Natural Sciences and Engineering Research Council of
Canada.

\end{document}